
\documentclass[final]{siamltex}

% definitions used by included articles, reproduced here for
% educational benefit, and to minimize alterations needed to be made
% in developing this sample file.

%\newcommand{\pe}{\psi}
%\def\d{\delta}
%\def\ds{\displaystyle}
%\def\e{{\epsilon}}
%\def\eb{\bar{\eta}}
%\def\enorm#1{\|#1\|_2}
%\def\Fp{F^\prime}
%\def\fishpack{{FISHPACK}}
%\def\fortran{{FORTRAN}}
%\def\gmres{{GMRES}}
%\def\gmresm{{\rm GMRES($m$)}}
%\def\Kc{{\cal K}}
%\def\norm#1{\|#1\|}
%\def\wb{{\bar w}}
%\def\zb{{\bar z}}

% some definitions of bold math italics to make typing easier.
% They are used in the corollary.

%\usepackage{times,amsmath,amsbsy,amssymb,amsthm}
%\usepackage{graphicx}
%\usepackage{dsfont}
%\numberwithin{equation}{section}
%\newtheorem{theorem}{THEOREM}[section]
%\newtheorem{lemma}{LEMMA}[section]
\newtheorem{remark}{Remark}[section]

\usepackage{times,amsmath,amsbsy,amssymb}
\usepackage{graphicx}

\title{NEW RESIDUAL-BASED A POSTERIORI ERROR ESTIMATORS FOR LOWEST-ORDER RAVIART-THOMAS
ELEMENT APPROXIMATION TO CONVECTION-DIFFUSION-REACTION
EQUATIONS\thanks{This work was supported in part by The Natural
Science Foundation of Chongqing city under Grant No. CSTC,
2010BB8270, The Education Science Foundation of Chongqing
(KJ120420),
 National Natural Science Foundation of China (11171239), The Project-sponsored
by Scientific Research Foundation for the Returned Overseas Chinese Scholars and
 Open Fund of  Key Laboratory of Mountain Hazards and Earth Surface Processes, CAS.}}

%\author{Shaohong Du$^{1}$\thanks{Email: shaohongdu@gmail.com }\hspace{15mm}Xuejun
%Xu$^{2}$\thanks{Email:
%xxj@lsec.cc.ac.cn}\\
% {\small $_1$ \ School of Science, Chongqing Jiaotong University,
%Chongqing 400047, China }\ \ \ \ \ \ \\
% {\small $_2$ \ LSEC, Institute of Computational Mathematics and Scientific$/$Engineering
%   }\\
% { \small Computing, Academy of Mathematics and Systems Science, Chinese Academy }\\
% {\small of Sciences,  Beijing 100910, China }
% }
\author{Shaohong Du\thanks{School of Science, Chongqing Jiaotong University,
                           Chongqing 400047, China, ({\it
                           dushhong@gmail.com}).}
\and Xiaoping XIE\thanks{School of Mathematics, Sichuan University,
Chengdu 610064, China ({\it xpxiec@gmail.com}).
}}

%\and Xuejun Xu\thanks{LSEC, Institute of Computational Mathematics
%and Scientific$/$Engineering Computing, Academy of Mathematics and
%Systems Science, Chinese Academy of Sciences, Beijing 100910, China,
%({\tt xxj@lsec.cc.ac.cn}). }}
\begin{document}

%\begin{flushleft}
%\hrulefill\\
%\end{flushleft}
\maketitle
\begin{small}
  {\bf{Abstract.} \rm{A new technique of residual-type a posteriori error analysis
   is developed for the lowest-order Raviart-Thomas mixed finite element
  discretizations of convection-diffusion-reaction equations in two- or three-dimension.
 Both centered mixed scheme and upwind-weighted mixed scheme are
  considered. The a posteriori error estimators, derived for the
  stress variable error plus
  scalar displacement error in $L^{2}$-norm, can be directly computed
  with the solutions of the mixed schemes without any
  additional cost, and are robust with respect to the  coefficients in the equations.
  Local efficiency
  dependent on local variations in coefficients is
  obtained without any saturation assumption, and holds from the
  cases where convection or reaction is not present to convection-
  or reaction-dominated problems. The main tools of analysis are the
  postprocessed approximation of scalar displacement, 
  abstract error estimates, and the property of  modified Oswald
  interpolation. Numerical
experiments are reported to support our theoretical results and to
show the competitive behavior of the proposed posteriori error
estimates.}}
\end{small}

\begin{it} Key words.\end{it} convection-diffusion-reaction
equation, centered mixed scheme,
upwind-weighted mixed scheme,  postprocessed approximation, a posteriori
error estimators

\begin{it} AMS subject classifications.\end{it} 65N15, 65N30, 76S05

\pagestyle{myheadings} \thispagestyle{plain} \markboth{S. DU AND X.
XIE}{A NEW POSTERIORI ERROR ESTIMATORS FOR CONVECTION-DIFFUSION
EQUATIONS}

\section {Introduction}
Let $\Omega\subset\mathbb{R}^{d}$ be a bounded polygonal
or polyhedral  domain  in
${\mathbb{R}}^{d}, d=2\ {\rm or}\ 3$. We consider the following
homogeneous Dirichlet boundary value problem for the
convection-diffusion-reaction equations:
\begin{equation}\label{convection-diffufsion-equations1}
 \left \{ \begin{array}{ll}
    -\nabla\cdot(S\nabla p)+\nabla\cdot(p{\bf w})+rp  =f \quad   \mbox{in}\ \ \Omega,\\
    \hspace{41mm} p=0  \quad \mbox{on}\ \partial{\Omega},
 \end{array}\right.
\end{equation}
where $S\in L^{\infty}(\Omega;{\mathbb{R}}^{d\times d})$ denotes an
inhomogeneous and anisotropic diffusion-dispersion tensor, ${\bf
w}$ is a (dominating) velocity field, $r$ a reaction function, $f$ a
source term. The choice of boundary conditions is made for ease of
presentation, since similar results are valid for other boundary
conditions. This type of equations arise in many chemical and biological settings.
For instance, in hydrology these equations  govern the
transport and degradation of adsorbing contaminants and microbe-nutrient systems
in groundwater.

Reliable and efficient a posteriori error estimators are an indispensable tool for
  adaptive algorithms.
For second-order elliptic problems without convection term, the theory of a
 posteriori error estimation has reached a degree of maturity for  finite elements of conforming,
 nonconforming and mixed types; see [1-9, 11-14, 18, 20, 22-23, 27, 31-33] and
the references therein.  For convection-diffusion(-reaction)
problems, on the contrary, the theory is still under development.

The mathematical analysis of robustness of a-posteriori estimators
for the convection-diffusion-reaction equations was first addressed
by Verf\"{u}rth \cite{Verfurth-4} in the singular perturbation case,
namely $S=\varepsilon I$ with $I$ the identical matrix and
$0<\varepsilon\ll1$.  The proposed estimators  for the standard
Galerkin approximation and the SUPG disctetization give global upper
and local lower bounds on the error measured in the energy norm,
%such that the
%ratio of the upper and lower bounds only depends on the local mesh-P$\acute{e}$clet
%number,
and are  robust when the P$\acute{e}$clet number becomes small.  In
\cite{Verfurth-5} {\it Verf\"{u}rth}  improved the
results of \cite{Verfurth-4} in the sense that the derived estimates
are fully robust with respect to convection dominance  and uniform
with respect to the size of the zero-order reaction term. {\it
Sangalli} \cite{Sangalli2001} developed an a posteriori estimator
for the residual-free bubbles methods applied to
convection-diffusion problems. Later he  presented a
residual-based a posteriori estimator for the one-dimensional
convection-diffusion-reaction model problem \cite{San08}. In
\cite{Kunert2003} {\it Kunert} carried out a posteriori error
estimation for the SUPG approach to a singularly perturbed
convection-diffusion problem  on anisotropic meshes. One may also
refer to \cite{Ohlberger,Ohlberger1} for a posteriori error
estimation in the framework of finite volume approximations.

For the convection-diffusion-reaction model
(\ref{convection-diffufsion-equations1}), following an idea of postprocessing 
 in \cite{Lovadina and Stenberg} {\it Vohral\'{\i}k}
\cite{Vohralik1} established residual
a posteriori error estimates for lowest-order Raviart-Thomas mixed
finite element discretizations  on simplicial meshes.  Global upper
bounds and local lower bounds for the postprocessed approximation
error, $p-\tilde{p}_h$, in the energy norm were derived with $\tilde{p}_h$  the postprocessed approximation to the finite element solution $p_h$, and the local efficiency of
the estimators was shown to depend only on local variations in the
coefficients and on the local P$\acute{e}$clet number. 
 Moreover,
the developed general framework allows for asymptotic exactness and
full robustness with respect to inhomogeneities and anisotropies.

In this paper,  we develop %, on the basis of  {\it Vohral\'{\i}k}'s work \cite{Vohralik1}, 
a new technique for  residual-based a
posteriori estimation of the lowest-order Raviart-Thomas mixed
finite element schemes (centered mixed scheme and upwind-mixed
scheme) over both the stress error, ${\bf u}-{\bf u}_h$,  and the displacement error, $p-p_h$, of the mixed finite element solutions $({\bf u}_h, p_h)$ for
the problem (\ref{convection-diffufsion-equations1}) with ${\bf u} :=-S\nabla p$. The derived reliability results    are robust with respect to the
coefficients. Local efficiency
  dependent only on local variations in the coefficients is
  obtained without any saturation assumption,  holds  for
the convection or reaction dominated equations. 
Compared with the standard
analysis to the diffusion equations, 
our analysis avoids, by using the postprocessed approximation $\tilde{p}_h$ as a transition, Helmholtz decomposition
of  stress variables and  dual arguments of  displacement
error in $L^{2}$-norm, and then does not need any weak
regularity assumption on the diffusion-dispersion tensor.      We note that although being employed in our analysis, the postprocessed displacement approximation and its modified Oswald interpolation are not  involved in
our estimators.

The rest of this paper is organized as follows. In Section 2 we give
notations, assumptions of   data, and the weak problem. We introduce
in Section 3 the mixed finite element schemes (include the centered
and upwind-weighted mixed scheme) and the post-processed techniques.
Section 4 includes  the main results. Section 5 collects some
preliminary results and   remarks. Section 6 and 7 analyze
respectively   the a posteriori error estimates and  the local
efficiency.  Finally, we present several numerical examples in
Section 8 to test our estimators.

\section{Notations, assumptions and weak problem}\ For a domain $A\subset\mathbb{R}^{d}$, we denote by
$L^{2}(A)$ and ${\bf L}^{2}(A) =:(L^{2}(A))^{d}$ the   spaces   of
square-integrable functions,
by $(\cdot,\cdot)_{A}$ the $L^{2}(A)$ or ${\bf L}^{2}(A)$ inner
product, by $||\cdot||_{A}$ the associated norm, and by $|A|$   the Lebesgue measure of $A$.  Let
$H^{k}(A)$ be the usual Sobolev space consisting of functions defined on $A$ with all
derivatives of order up
to  $k$ square-integrable;  $H_0^1(A):=\{v\in H^{1}(A):\ v|_{\partial A}=0\}$, ${\bf H}({\rm div},A):=\{{\bf
v}\in {\bf L}^{2}(A):{\rm div}\ {\bf v}\in L^{2}(A)\}$.
$<\cdot,\cdot>_{\partial A}$ denotes $d-1$-dimensional inner
product on $\partial A$ for the duality paring between
$H^{-1/2}(\partial A)$ and $H^{1/2}(\partial A)$.

Let $\mathcal{T}_{h}$ be a shape regular triangulation  in the sense
of \cite{Ciarlet} which satisfies the angle condition, namely there
exists a constant $c_{0}$ such that for all $K\in\mathcal{T}_{h}$ with $h_{K} :={\rm diam}(K)$,
\begin{equation*}
c_{0}^{-1}h_{K}^{d}\leq|K|\leq c_{0}h_{K}^{d}.
\end{equation*}
Let $C_{Q},c_{Q}$ be positive constants dependent only on a quantity
$Q$, and $c_{i} (i=1,2,\cdots)$   positive constants  determined
only by  the shape regularity parameter, $c_{0}$, of
$\mathcal{T}_{h}$. We denote by $\varepsilon_{h}$ the set of element
sides in $\mathcal{T}_{h}$, by $\varepsilon_{h}^{{\rm int}}$ and
$\varepsilon_{h}^{{\rm ext}}$ the sets of all interior and exterior
sides of $\mathcal{T}_{h}$, respectively. For $K\in\mathcal{T}_{h}$,
denote by $\varepsilon_{K}$ the set of sides of $K$, especially by
$\varepsilon_{K}^{\rm ext}$ the set of the boundary sides of $K$.
Furthermore, we denote by $\omega_{\sigma}$ and $
\tilde{\omega}_{\sigma}$ the union of all elements in
$\mathcal{T}_{h}$ sharing a side $\sigma$ and the union of all
elements sharing at least one point of $\sigma$, respectively. For
an element $K\in\mathcal{T}_{h}$ the set $\tilde{\omega}_{K}$ is
defined analogously. We  also use the "broken Sobolev space"
$H^{1}(\bigcup\mathcal{T}_{h}) :=\{\varphi\in
L^{2}(\Omega):\varphi|_{K}\in H^{1}(K),\forall
K\in\mathcal{T}_{h}\}$, and denote by $[v]|_{\sigma}
:=(v|_{K})|_{\sigma}-(v|_{L})|_{\sigma}$ the jump of $v\in
H^{1}(\bigcup\mathcal{T}_{h})$ over an interior side $\sigma :=\bar{K}\cap
\bar{L}$ of diameter $h_{\sigma}: ={\rm diam}(\sigma)$, shared by the two
neighboring (closed) elements $K,L\in\mathcal {T}_{h}$. Especially,
$[v]|_{\sigma} :=(v|_{K})|_{\sigma}$ if
$\sigma\in\varepsilon_{K}^{\rm ext}$.

We consider $d=2,3$ simultaneously and let $m :=1$ if $d=2$ and $m
:=3$ if $d=3$. The Curl of a function $\psi\in H^{1}(\Omega)^{m}$ is
defined by
\begin{equation*}
{\rm Curl}\psi :=(-\partial \psi/\partial x_2, \partial \psi/\partial x_1)\ {\rm if\ d=2}\ \ {\rm and}\ \ {\rm
Curl}\psi :=\nabla\times\psi\ \ {\rm if\ d=3},
\end{equation*}
where $\times$ denotes
the usual vector product of two vectors in $\mathbb{R}^{3}$. Given a unit normal vector ${\bf n}=(n_1,n_2)$
along the side $\sigma$, we define the tangential component of a
vector ${\bf v}\in\mathbb{R}^{d}$ %with respect to ${\bf n}_{\sigma}$
by
\begin{equation*}
\gamma_{{\bf t}_{\sigma}}({\bf v}) :=\left \{ \begin{array}{ll}
{\bf v}\cdot (-n_2,n_1) \quad   \mbox{if}\ \ d=2,\\
{\bf v}\times{\bf n} \quad   \mbox{if}\ \ d=3.
 \end{array}\right.
\end{equation*}

We note that throughout the paper, the local versions of
differential operators $\nabla, {\rm curl}$ are understood in the
distribution sense, namely, ${\rm
curl}_{h}:H^{1}(\bigcup\mathcal{T}_{h})^{d}\rightarrow
L^{2}(\Omega)$ and
$\nabla_{h}:H^{1}(\bigcup\mathcal{T}_{h})\rightarrow
L^{2}(\Omega)^{d}$ are defined with ${\rm curl}_{h}{\bf v}|_{K}
:={\rm curl}({\bf v}|_{K}) $ and $\nabla_{h}{\bf w}|_{K}
:=\nabla({\bf w}|_{K}) $  for all $\ K\ \in\mathcal{T}_{h}$.

We need in our analysis the following inequalities, Poincar\'{e} inequality and
Friedrichs inequalities  \cite{Bebendorf,Payne;Weinberger}: for $K\in\mathcal{T}_{h}$
and $\varphi\in H^{1}(K)$,
\begin{equation}\label{Poicare inequality}
||\varphi-\varphi_{K}||_{K}^{2}\leq
C_{P,d}h_{K}^{2}||\nabla\varphi||_{K}^{2},
\end{equation}
\begin{equation}\label{Friedrichs inequality}
(\varphi_{K}-\varphi_{\sigma})^{2}\leq
\frac{3dh_{K}^{2}}{|K|}||\nabla\varphi||_{K}^{2},\ \
||\varphi-\varphi_{\sigma}||_{K}^{2}\leq3dh_{K}^{2}||\nabla\varphi||_{K}^{2}.
\end{equation}
Here
$\varphi_{K} :=(1,\varphi)_{K}/|K|$ and $\varphi_{\sigma} :=<1,\varphi>_{\sigma}/|\sigma|$ denote the integrable means of $\varphi$ over $K$ and over $\sigma\in\varepsilon_{K}$, respectively.
The constant $C_{P,d}$ can be evaluated as $d/\pi$ for a
simplex by using its convexity.

Following \cite{Vohralik1}, we suppose that there exists an original triangulation
$\mathcal{T}_{0}$ of $\Omega$ such that    data of the problem
($\ref{convection-diffufsion-equations1}$) are given in the
following way.\\
\textbf{Assumptions of data }:\\
{\it (D1)\ $S_{K} :=S|_{K}$ is a constant, symmetric, and uniformly
positive definite tensor such that $c_{S,K}{\bf v}\cdot{\bf v}\leq S_{K}{\bf
v}\cdot{\bf v}\leq C_{S,K}{\bf v}\cdot{\bf v}$ holds for all ${\bf v}\in\mathbb{R}^{d}$ and all
$K\in\mathcal{T}_{0}$ with $c_{S,K},C_{S,K}>0$;\\
(D2)\ ${\bf w}\in RT_{0}(\mathcal {T}_{0})$ (cf, Section 3 below)
such that $|{\bf w}|_{K}|\leq C_{{\bf w},K} $ holds  for
all
$K\in\mathcal{T}_{0}$ with $C_{{\bf w},K}\geq0$;\\
(D3)\ $r_{K} :=r|_{K}$ is a constant for all
$K\in\mathcal{T}_{0}$;\\
(D4)\ $c_{{\bf w},r,K} :=\frac{1}{2}\nabla\cdot{\bf w}|_{K}+r_{K}\geq0$ and
$C_{{\bf w},r,K} :=|\nabla\cdot{\bf w}|_{K}+r_{K}|$ 
 for all $K\in\mathcal{T}_{0}$;\\
(D5)\ $f|_{K}$ is a polynomial for each
$K\in\mathcal{T}_{0}$;\\
(D6)\ if $c_{{\bf w},r,K}=0$, then $C_{{\bf w},r,K}=0$.}

As pointed out in \cite{Vohralik1}, all the assumptions are made for the
sake of simplicity and are usually satisfied in practice. If 
data do not satisfy these assumptions, we may employ the
interpolation or projection of data with additional occurrence of
data oscillation.
%We also note that Assumption (D6) allows
%$c_{{\bf w},r,K}=0$, but does not mean ${\bf w}|_{K}=0$.

Finally we show the weak problem of the model (\ref{convection-diffufsion-equations1}): Find $p\in
H_{0}^{1}(\Omega)$ such that
\begin{equation}\label{continuous problem 2}
\mathcal{B}(p,\varphi)=(f,\varphi)\ \ \ {\rm for\ all}\ \ \varphi\in
H_{0}^{1}(\Omega).
\end{equation}
Here the bilinear form 
\begin{equation*}%\label{continuous problem 1}
\mathcal{B}(p,\varphi) :=\sum\limits_{K\in\mathcal{T}_{h}}\{(S\nabla
p,\nabla\varphi)_{K}+(\nabla\cdot(p{\bf
w}),\varphi)_{K}+(rp,\varphi)_{K}\},p,\varphi\in
H^{1}(\bigcup\mathcal{T}_{h}),
\end{equation*}
and  $\mathcal{T}_{h}$ is a refinement of $\mathcal{T}_{0}$.
 We define as following an  energy (semi) norm corresponding to the bilinear form $\mathcal{B}$:
\begin{equation*}
|||\varphi|||_{\Omega}^{2}
:=\sum\limits_{K\in\mathcal{T}_{h}}|||\varphi|||_{K}^{2},\
|||\varphi|||_{K}^{2} :=(S\nabla\varphi,\nabla\varphi)_{K}+c_{{\bf
w},r,K}||\varphi||_{K}^{2},\varphi\in H^{1}(\bigcup\mathcal{T}_{h}).
\end{equation*}

%We note that $\mathcal{B}(\cdot,\cdot)$ and $|||\cdot|||_{\Omega}$
%are well defined for $p,\varphi\in H_{0}^{1}(\Omega)$ as well as for
%$p,\varphi\in H^{1}(\mathcal{T}_{h})$.
We note that the weak problem (\ref{continuous problem 2}) admits a
unique solution under the Assumptions (D1)-(D6) \cite{Vohralik1}.

\section{Mixed finite element schemes and postprocessing}
Since it is of interest in many applications, the stress variable
${\bf u} :=-S\nabla p$ are usually approximated by using the mixed
finite elements for the problem
(\ref{convection-diffufsion-equations1}). We introduce in this
section the centered and upwind-weighted mixed finite element
schemes, and show the postprocessed techniques presented by {\it
Vohral\'{\i}k} in \cite{Vohralik1}.

We  define the lowest order Raviart-Thomas finite
element and piecewise constant space respectively as following:
$$
RT_{0}(\mathcal {T}_{h}) :=\left\{ \begin{array}{c} {\bf q}_{h}\in
{\bf H}({\rm div},\Omega):\ \forall K\in\mathcal {T}_{h},\  \exists
{\bf a}\in\mathbb{R}^{d},\  \exists b\in\mathbb{R}, \\ {\rm such\
that}\ {\bf q}_{h}({\bf x})={\bf a}+b{\bf x},{\rm for\ all}\ {\bf
x}\in K.
\end{array}\right\},
$$
$$
P_{0}({\mathcal {T}_{h}}) :=\{v_{h}\in L^{\infty}(\Omega):\ \forall
K\in\mathcal {T}_{h},\  v_{h}|_{K}\in P_{0}(K) \}.
$$
Here ${\bf n}$ is the unit outer normal vector along $\sigma\in\varepsilon_{h}$, and $P_{0}(K)$
denotes the set of constant functions on each $K\in\mathcal{T}_{h}$. We note
that $\nabla\cdot (RT_{0}(\mathcal{T}_{h}))\subset
P_{0}({\mathcal {T}_{h}})$.

 The centered mixed finite element scheme   \cite{Douglas,Vohralik1} reads as: Find $({\bf u}_{h},p_{h})\in
RT_{0}(\mathcal {T}_{h})\times P_{0}({\mathcal {T}_{h}})$ such that
\begin{equation}\label{centered mixed scheme}%\small
(S^{-1}{\bf u}_{h},{\bf v}_{h})_{\Omega}-(p_{h},\nabla\cdot{\bf
v}_{h})_{\Omega}=0\ \ \ {\rm for\ all}\ {\bf v}_{h}\in
RT_{0}(\mathcal {T}_{h}),
\end{equation}
\begin{equation}\label{centered mixed scheme 2}%\small
(\nabla\cdot{\bf u}_{h},\varphi_{h})_{\Omega}-(S^{-1}{\bf
u}_{h}\cdot{\bf w},\varphi_{h})_{\Omega}+((r+\nabla\cdot{\bf
w})p_{h},\varphi_{h})_{\Omega}=(f,\varphi_{h})_{\Omega}\ \  {\rm for\ all}\ \ \varphi_{h}\in P_{0}({\mathcal
{T}_{h}}).
\end{equation}

The upwind-weighted mixed finite element scheme \cite{Dawson,Vohralik1}  reads as: Find
$({\bf u}_{h},p_{h})\in RT_{0}(\mathcal {T}_{h})\times
P_{0}({\mathcal {T}_{h}})$ such that
\begin{equation}\label{upwind-weighted scheme 1}
(S^{-1}{\bf u}_{h},{\bf v}_{h})_{\Omega}-(p_{h},\nabla\cdot{\bf
v}_{h})_{\Omega}=0\ \ \ {\rm for\ all}\ {\bf v}_{h}\in
RT_{0}(\mathcal {T}_{h}),
\end{equation}
\begin{equation}\label{upwind-weighted scheme 2}
(\nabla\cdot{\bf
u}_{h},\varphi_{h})_{\Omega}+\sum\limits_{K\in\mathcal{T}_{h}}
\sum\limits_{\sigma\in\varepsilon_{K}}\hat{p}_{\sigma}w_{K,\sigma}\varphi_{K}
+(rp_{h},\varphi_{h})_{\Omega}=(f,\varphi_{h})_{\Omega}\ \ {\rm for\ all}\ \ \varphi_{h}\in P_{0}({\mathcal
{T}_{h}}),
\end{equation}
where $w_{K,\sigma} :=<1,{\bf w}\cdot{\bf n}>_{\sigma}$ for
$\sigma\in\varepsilon_{K}$, with ${\bf n}$  the unit normal
vector of   $\sigma$, outward to $K$, $\varphi_{K} =(1,\varphi_{h})_K/|K|=\varphi_{h}|_{K}$ for all
$K\in\mathcal{T}_{h}$, and $\hat{p}_{\sigma}$ is  the weighted upwind value given by
\begin{equation}\label{upwind-weighted scheme 3}
 \hat{p}_{\sigma} :=\left \{ \begin{array}{ll}
  (1-\nu_{\sigma})p_{K}+\nu_{\sigma}p_{L}\ \  & \mbox{if}\;  \ \ w_{K,\sigma}\geq0,\\
 (1-\nu_{\sigma})p_{L}+\nu_{\sigma}p_{K}\ \  & \mbox{if}\; \ \
 w_{K,\sigma}<0
 \end{array}\right.
\end{equation}
when $\sigma$ is an interior side sharing by elements $K$ and $L$,
and by
\begin{equation}\label{upwind-weighted scheme 4}
 \hat{p}_{\sigma} :=\left \{ \begin{array}{ll}
  (1-\nu_{\sigma})p_{K}\ \  & \mbox{if}\;  \ \ w_{K,\sigma}\geq0,\\
 \nu_{\sigma}p_{K}\ \  & \mbox{if}\; \ \
 w_{K,\sigma}<0
 \end{array}\right.
\end{equation}
when $\sigma$ is a boundary side included in $\varepsilon_{K}$.
Here $p_K$ and $p_L$ denotes respectively the restrictions  of
$p_{h}$ over $K$ and L,   $\nu_{\sigma}\in[0,1/2]$ denotes the
coefficient of the amount of upstream weighting which may be chosen as \cite{Vohralik1}
\begin{equation}\label{upwind-weighted scheme 5}
\nu_{\sigma} :=\left \{ \begin{array}{ll}
 \min\{c_{S,\sigma}\frac{|\sigma|}{h_{\sigma}|w_{K,\sigma}|},\frac{1}{2}\}
 \  & \mbox{if}\;  \ \ w_{K,\sigma}\neq0\ {\rm and}\ \sigma\in\varepsilon_{h}^{{\rm int}},\\
\ &{\rm or\ if}\ \sigma\in\varepsilon_{h}^{{\rm ext}}\ {\rm and}\
w_{K,\sigma}>0;\\
 0\ \  & \mbox{if}\; \ \ w_{K,\sigma}=0\ {\rm or\ if}\ \sigma\in\varepsilon_{h}^{{\rm ext}}\ {\rm and}\
w_{K,\sigma}<0,
\end{array}\right.
\end{equation}
where $c_{S,\sigma}$ is the harmonic average of $c_{S,K}$ and
$c_{S,L}$ if $\sigma\in\partial K\cap\partial L$ and $c_{S,K}$
otherwise.

We now introduce the postprocessed technique in \cite{Vohralik1},
where  a postprocessed approximation $\tilde{p}_{h}$ to the
displacement $p$ is constructed which links $p_h$ and ${\bf u}_{h}$
  on each simplex in the following way:
\begin{equation}\label{postprocessed variable 1}
-S_{K}\nabla\tilde{p}_{h}|_{K}={\bf u}_{h}\ \ \ {\rm for\ all}\ \
K\in\mathcal{T}_{h},
\end{equation}
\begin{equation}\label{postprocessed variable 2}
\frac{1}{|K|}\int_{K}\tilde{p}_{h}d{\bf x}=p_{K}\ \ \ {\rm for\
all}\ \ K\in\mathcal{T}_{h}.
\end{equation}
%We note that these two variables $p_{h}$ and ${\bf u}_{h}$ are
%independent in standard mixed finite element theory (see, e.g.,
%\cite{Brezzi,Roberts}). However, here, they are linked by
%$\tilde{p}_{h}$ on each simplex.
We refer to \cite{Vohralik1}   for the existence of
$\tilde{p}_{h}$. We  note that the new quantity $\tilde{p}_{h}\in
W_{0}(\mathcal{T}_{h})$ but $\notin H_{0}^{1}(\Omega)$ (see LEMMA
6.1 in \cite{Vohralik1}), where
\begin{equation*}
\begin{array}{lll}
W_{0}(\mathcal {T}_{h}) &:=&\{\varphi\in L^{2}(\Omega):
\varphi|_{K}\in H^{1}(K)\ {\rm for\ all}\  K\in\mathcal {T}_{h}, \
<1,\varphi|_{K}-\varphi|_{L}>_{\sigma_{K,L}}\vspace{2mm}\\
\ \ &=0&{\rm for \ all}\ \ \sigma_{K,L}\in\varepsilon_{h}^{{\rm int}},\
\ <1,\varphi>_{\sigma}=0\ \ {\rm for\ all}\
\sigma\in\varepsilon_{h}^{{\rm ext}}\}.
\end{array}
\end{equation*}

\section{Main results}\ With the stress variable ${\bf u}=-S\nabla p$,
%the energy norm of the displacement  $p$ may be regarded as $L^{2}$-norm of ${\bf u}$ plus $L^{2}$-norm of $p$.
we
define  the global and local errors,  $\mathcal{E}$ and $\mathcal{E}_K$, of
the stress and displacement variables
as
\begin{equation}\label{new-1}
\mathcal{E} :=\{\sum\limits_{K\in\mathcal{T}_{h}}\mathcal{E}_{K}^2\}^{1/2}, \  \mathcal{E}_{K}^2 :=||S^{-1/2}({\bf u}-{\bf
u}_{h})||_{K}^{2}+c_{{\bf w},r,K}||p-p_{h}||_{K}^{2}.
\end{equation}

Denote respectively by $\eta_{D,K}$ and $\eta_{R,K}$ the elementwise
displacement and residual estimator with
\begin{equation}\label{new 0}
\eta_{D,K}^{2} :=c_{{\bf w},r,K}h_{K}^{2}||S^{-1}{\bf
u}_{h}||_{K}^{2},
\end{equation}
\begin{equation}\label{new 1}
\eta_{R,K}^{2} :=\alpha_{K}^{2}||f-\nabla\cdot{\bf
u}_{h}+(S^{-1}{\bf u}_{h})\cdot{\bf w}-(r+\nabla\cdot{\bf
w})p_{h}||_{K}^{2}+\beta_{K}^{2}||S^{-1}{\bf u}_{h}||_{K}^{2}.
\end{equation}
Here
 the
residual weight factors
\begin{equation}\label{new 13}
\alpha_{K}:=\min\{\frac{h_{K}}{\sqrt{c_{S,K}}},\frac{1}{\sqrt{c_{{\bf
w},r,K}}}\},\ \ \ \beta_{K}:=C_{{\bf w},r,K}h_{K}\alpha_{K}.
\end{equation}
Note that in
(\ref{new 13}), if $c_{{\bf w},r,K}=0$, $\alpha_{K}$ should be
understood as $h_{K}/\sqrt{c_{S,K}}$.

Let $\nu_{\sigma}$ be given in (\ref{upwind-weighted scheme 5})
for each side $\sigma\in\varepsilon_{h}$. We denote
\begin{equation}\label{hat-hat-p1}
 \hat{\hat{p}}_{\sigma} :=\left \{ \begin{array}{ll}
  (1/2-\nu_{\sigma})(p_{K}-p_{L})\ \  & \mbox{if}\;  \ \ w_{K,\sigma}\geq0,\\
 (1/2-\nu_{\sigma})(p_{L}-p_{K})\ \  & \mbox{if}\; \ \
 w_{K,\sigma}<0
 \end{array}\right.
\end{equation}
when $\sigma$ is an interior side sharing by elements $K$ and $L$,
and
\begin{equation}\label{hat-hat-p2}
 \hat{\hat{p}}_{\sigma} :=\left \{ \begin{array}{ll}
  -\nu_{\sigma}p_{K}\ \  & \mbox{if}\;  \ \ w_{K,\sigma}\geq0,\\
 -(1-\nu_{\sigma})p_{K}\ \  & \mbox{if}\; \ \
 w_{K,\sigma}<0,
 \end{array}\right.
\end{equation}
when $\sigma$ is a boundary side included in $\varepsilon_{K}$.
We thus define an elementwise upwind estimator $\eta_{U,K}$ by
\begin{equation}\label{new 2}
\eta_{U,K}^{2}
:=\frac{h_{K}}{c_{S,K}}\sum\limits_{\sigma\in\varepsilon_{K}} (({\bf
w}\cdot{\bf
n})|_{\sigma})^{2}(||\hat{\hat{p}}_{\sigma}||_{\sigma}^{2}+
h_{\sigma}||S^{-1}{\bf u}_{h}||_{\omega_{\sigma}}^{2}).
\end{equation}

In order to reflect the change of the maximum eigenvalue of the
coefficients matrix $S$ over the patch $\tilde{\omega}_{\sigma}$ of
a side $\sigma\in\varepsilon_{h}$, we introduce a quantity
\begin{equation*}
\Lambda_{\sigma}
:=\max_{K,\bar{K}\cap\bar{\sigma}\neq\emptyset}\{C_{S,K}\}.
\end{equation*}
Similarly, the change of one variation $c_{{\bf w},r,K}$ of the
coefficients over the patch $\tilde{\omega}_{K}$ of an element
$K\in\mathcal{T}_{h}$ is described by the  quantity
\begin{equation*}
\Lambda_{{\bf w},r,K}
:=\max_{K',\bar{K'}\cap\bar{K}\neq\emptyset}\{c_{{\bf w},r,K}\}.
\end{equation*}
Thus we define $\eta_{NC,K}$ as the elementwise nonconforming
estimator by
\begin{equation}\label{new 3}
\eta_{NC,K}^{2} :=\displaystyle\Lambda_{{\bf
w},r,K}h_{K}^{2}||S^{-1}{\bf u}_{h}||_{K}^{2}
+\sum\limits_{\sigma\in\varepsilon_{K}}
\delta_{\sigma}\Lambda_{\sigma}h_{\sigma}||[\gamma_{{\bf
t}_{\sigma}}(S^{-1}{\bf u}_{h})]||_{\sigma}^{2},
\end{equation}
where $\delta_{\sigma}=1/2$ if $\sigma\in\varepsilon_{h}^{\rm int}$,
$\delta_{\sigma}=1$ if $\sigma\in\varepsilon_{h}^{\rm ext}$.

Since the convection occurs in the equations, we need to define two
numbers $\Lambda_{\nabla\cdot{\bf w},K}$ and $\Lambda_{{\bf
w},\sigma}$ similar to P\'{e}clet numbers describing the
convection-dominated. To this end, for each $K\in\mathcal{T}_{h}$ we denote
\begin{equation*}
C_{\nabla\cdot{\bf w},K}
:=|\nabla\cdot{\bf w}|_{K}|, \ \ \Lambda_{\nabla\cdot{\bf w},K}
:=\max_{K':\bar{K'}\cap\bar{K}\neq\emptyset}
\{\frac{C_{\nabla\cdot{\bf w},K'}}{\sqrt{c_{{\bf w},r,K'}}}\},
\end{equation*}
%\begin{equation*}
% \lambda_{{\bf w},\sigma}
%:=\max_{K:\bar{K}\cap\bar{\sigma}\neq\emptyset}\{\frac{C_{{\bf
%w},K}}{\sqrt{c_{{\bf w},r,K}}}\},\ \  \ p_{{\bf w},\sigma}
%:= \max_{K:\bar{K}\cap\bar{\sigma}\neq\emptyset}\{\frac{h_{K}C_{{\bf
%w},K}}{\sqrt{c_{S,K}}}\}.
%\end{equation*}
and for    each
$\sigma\in\varepsilon_{h}$  we set $\Lambda_{{\bf w},\sigma} :=\min\{\lambda_{{\bf
w},\sigma},p_{{\bf w},\sigma}\}$ with
%\begin{equation*}
%C_{\nabla\cdot{\bf w},K}
%:=|\nabla\cdot{\bf w}|_{K}|, \ \ \Lambda_{\nabla\cdot{\bf w},K}
%:=\max_{K':\bar{K'}\cap\bar{K}\neq\emptyset}
%\{\frac{C_{\nabla\cdot{\bf w},K'}}{\sqrt{c_{{\bf w},r,K'}}}\},
%\end{equation*}
\begin{equation*}
 \lambda_{{\bf w},\sigma}
:=\max_{K:\bar{K}\cap\bar{\sigma}\neq\emptyset}\{\frac{C_{{\bf
w},K}}{\sqrt{c_{{\bf w},r,K}}}\},\ \  \ p_{{\bf w},\sigma}
:= \max_{K:\bar{K}\cap\bar{\sigma}\neq\emptyset}\{\frac{h_{K}C_{{\bf
w},K}}{\sqrt{c_{S,K}}}\}.
\end{equation*}
We then define $\eta_{C,K}$ as an elementwise convection
estimator by
\begin{equation}\label{new 4}
\eta_{C,K}^{2} :=\displaystyle \Lambda_{\nabla\cdot{\bf
w},K}^{2}h_{K}^{2}||S^{-1}{\bf u}_{h}||_{K}^{2}
+\sum\limits_{\sigma\in\varepsilon_{K}}\delta_{\sigma}\Lambda_{{\bf
w},\sigma}^{2}h_{\sigma}||[\gamma_{{\bf t}_{\sigma}}(S^{-1}{\bf
u}_{h})]||_{\sigma}^{2}.
\end{equation}

We now state  a posteriori error estimates for the global error of stress and
displacement.
\begin{theorem} \label{global error estimate} {\rm (Global
error estimate for the centered mixed scheme)}\ {\it Let $p\in
H_{0}^{1}(\Omega)$ be the weak solution of the problem
(\ref{continuous problem 2}), ${\bf u}=-S\nabla p$ be the continuous
stress vector, $({\bf u}_{h},p_{h})$ be the solution of the centered
mixed scheme (\ref{centered mixed scheme})-(\ref{centered mixed
scheme 2}).  Let $\mathcal{E}$ be the error of  the stress and
displacement in the   weighted norm defined in (\ref{new-1}),
$\eta_{D,K},\eta_{R,K},\eta_{NC,K}$, and $\eta_{C,K}$ are the
corresponding elementwise displacement estimator, residual
estimator, convection estimator, and nonconforming estimator,
defined in (\ref{new 0})-(\ref{new 1}) and (\ref{new 3})-(\ref{new
4}), respectively. Then it holds
\begin{equation}\label{global error estimate 1}
\mathcal{E}\leq
c_{1}\{\sum\limits_{K\in\mathcal{T}_{h}}(\eta_{D,K}^{2}+\eta_{R,K}^{2}+\eta_{C,K}^{2}
+\eta_{NC,K}^{2})\}^{1/2}.
\end{equation}}
\end{theorem}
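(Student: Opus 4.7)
The plan is to use the postprocessed approximation $\tilde{p}_h$ as a bridge between the discrete and continuous problems, so as to avoid Helmholtz decomposition of the stress error and any duality argument for the $L^2$-displacement error. First I would observe that, by the defining property (\ref{postprocessed variable 1}), $\|S^{-1/2}({\bf u}-{\bf u}_h)\|_K = \|S^{1/2}\nabla(p-\tilde{p}_h)\|_K$, so the stress error becomes the broken energy seminorm of $p-\tilde{p}_h$. Next, I would split the displacement error by the triangle inequality as $p-p_h = (p-\tilde{p}_h) + (\tilde{p}_h-p_h)$; since $\tilde{p}_h$ has mean value $p_K$ on each $K$ by (\ref{postprocessed variable 2}) and $p_h|_K = p_K$, the Poincar\'e inequality (\ref{Poicare inequality}) gives $\|\tilde{p}_h-p_h\|_K \leq C_{P,d}\, h_K\|\nabla\tilde{p}_h\|_K = C_{P,d}\, h_K\|S_K^{-1}{\bf u}_h\|_K$. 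Multiplying by $c_{{\bf w},r,K}$ recovers exactly the displacement estimator $\eta_{D,K}^2$. So the problem is reduced to bounding the broken energy norm $\||p-\tilde{p}_h\||_\Omega$.

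For this reduction, I would apply the modified Oswald interpolation $\mathcal{I}_{\mathrm{Os}}: W_0(\mathcal{T}_h) \to H_0^1(\Omega)$ and decompose
\begin{equation*}
p - \tilde{p}_h = \underbrace{(p - \mathcal{I}_{\mathrm{Os}}\tilde{p}_h)}_{=: \phi \in H_0^1(\Omega)} \;+\; \underbrace{(\mathcal{I}_{\mathrm{Os}}\tilde{p}_h - \tilde{p}_h)}_{=: \psi}.
\end{equation*}
The conforming part $\phi$ is to be handled by a Galerkin--residual argument, and the nonconforming part $\psi$ by the stability/approximation properties of $\mathcal{I}_{\mathrm{Os}}$ together with the identity $\nabla\tilde{p}_h = -S_K^{-1}{\bf u}_h$ on each $K$.

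For the conforming part, integration by parts under (D4) shows that $\mathcal{B}$ is coercive on $H_0^1(\Omega)$ with $\mathcal{B}(\phi,\phi) = \||\phi\||_\Omega^2$, so $\||\phi\||_\Omega^2 \le \mathcal{B}(p-\tilde{p}_h, \phi) + \mathcal{B}(\phi,\phi-\phi) \dots$; more precisely, I would write $\mathcal{B}(p-\tilde{p}_h,\phi) = (f,\phi)_\Omega - \mathcal{B}(\tilde{p}_h,\phi)$, do the elementwise integration by parts $(S\nabla\tilde{p}_h,\nabla\phi)_K = -({\bf u}_h,\nabla\phi)_K$ (the interelement boundary terms cancel because ${\bf u}_h\in{\bf H}(\mathrm{div},\Omega)$ and $\phi\in H_0^1$), and finally subtract the centered mixed equation (\ref{centered mixed scheme 2}) tested against the $L^2$-projection $P_0\phi \in P_0(\mathcal{T}_h)$. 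What remains is the element residual $f - \nabla\cdot{\bf u}_h + (S^{-1}{\bf u}_h)\cdot{\bf w} - (r+\nabla\cdot{\bf w})p_h$ paired with $\phi - P_0\phi$, which by Poincar\'e/Friedrichs (\ref{Poicare inequality})--(\ref{Friedrichs inequality}) and the choice $\alpha_K=\min\{h_K/\sqrt{c_{S,K}},1/\sqrt{c_{{\bf w},r,K}}\}$ gives the first summand of $\eta_{R,K}^2$; the pure convective part $(S^{-1}{\bf u}_h)\cdot{\bf w}$ is reorganized by edgewise Green's formula using ${\bf w}\in RT_0(\mathcal{T}_0)$ (by (D2)), producing $\beta_K^2\|S^{-1}{\bf u}_h\|_K^2$ together with interior-face tangential-jump contributions weighted by the P\'eclet-type numbers $\Lambda_{\nabla\cdot{\bf w},K}$ and $\Lambda_{{\bf w},\sigma}$, i.e., the full $\eta_{C,K}^2$.

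For the nonconforming part, the key property of $\mathcal{I}_{\mathrm{Os}}$ bounds $\|\psi\|_K^2 + h_K^2\|\nabla\psi\|_K^2$ by $\sum_{\sigma\in\varepsilon_K}h_\sigma\|[\tilde{p}_h]\|_\sigma^2$; since $\tilde{p}_h\in W_0(\mathcal{T}_h)$ the jump $[\tilde{p}_h]$ has zero edge-mean, so a one-dimensional Poincar\'e inequality on $\sigma$ gives $\|[\tilde{p}_h]\|_\sigma \lesssim h_\sigma\|[\gamma_{{\bf t}_\sigma}\nabla\tilde{p}_h]\|_\sigma = h_\sigma\|[\gamma_{{\bf t}_\sigma}(S^{-1}{\bf u}_h)]\|_\sigma$. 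Weighting by the eigenvalue bounds $\Lambda_\sigma,\Lambda_{{\bf w},r,K}$ yields exactly $\eta_{NC,K}^2$. Summing the conforming and nonconforming contributions and combining with the reduction of the first paragraph gives (\ref{global error estimate 1}). The main obstacle I foresee is in the residual step of paragraph three: making the constant independent of the coefficients of $S$, ${\bf w}$, $r$ requires the dichotomy built into $\alpha_K$ (choose $h_K/\sqrt{c_{S,K}}$ when diffusion dominates and $1/\sqrt{c_{{\bf w},r,K}}$ when reaction dominates), careful treatment of the degenerate case $c_{{\bf w},r,K}=0$ via (D6), and a splitting of the convective residual that yields the patch quantities $\Lambda_{{\bf w},\sigma}=\min\{\lambda_{{\bf w},\sigma},p_{{\bf w},\sigma}\}$ rather than a non-robust pure-$\lambda$ or pure-$p$ factor.
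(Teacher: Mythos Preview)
Your overall architecture matches the paper's: reduce $\mathcal{E}$ to $|||p-\tilde p_h|||_\Omega + (\sum_K\eta_{D,K}^2)^{1/2}$ via the postprocessing identities, then split through $s=\mathcal{I}_{\rm MO}\tilde p_h$. The paper does this by quoting Vohral\'{\i}k's abstract estimate (Lemma~\ref{abstract error estimate}) rather than your explicit decomposition $p-\tilde p_h=\phi+\psi$, but the two are equivalent at the level of ideas, and your treatment of the nonconforming part $\psi$ is exactly Lemmas~\ref{auxialiary problem 2}--\ref{dual problem 2}.

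The gap is in your conforming paragraph. From $|||\phi|||_\Omega^2=\mathcal{B}(\phi,\phi)=(f,\phi)-\mathcal{B}(s,\phi)$ you must insert $\tilde p_h$ by writing $\mathcal{B}(s,\phi)=\mathcal{B}(\tilde p_h,\phi)+\mathcal{B}(s-\tilde p_h,\phi)$. The first piece, after elementwise integration by parts and subtraction of (\ref{centered mixed scheme 2}) against $P_0\phi$, yields only $T_R(\phi)$ and hence only $\eta_{R,K}$; moreover the $\beta_K^2\|S^{-1}{\bf u}_h\|_K^2$ part of $\eta_{R,K}$ arises there from replacing $(r+\nabla\cdot{\bf w})\tilde p_h$ by $(r+\nabla\cdot{\bf w})p_h$ and using $\|\tilde p_h-p_h\|_K\le h_K\|S^{-1}{\bf u}_h\|_K$, \emph{not} from any edgewise Green identity on the convective flux. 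The convection estimator $\eta_{C,K}$ comes entirely from the second piece, the cross term $\mathcal{B}(\tilde p_h-s,\phi)$: its symmetric diffusion and reaction parts can be absorbed by $|||\tilde p_h-s|||_\Omega\,|||\phi|||_\Omega$, while its non-symmetric convection part is precisely the functional $T_C(\phi,s)$ of (\ref{covection component 1}), bounded in Lemma~\ref{du-xu2} via the Oswald estimates (\ref{LEMMA 6.3.4}) and (\ref{LEMMA 6.4.2}) to produce the tangential-jump terms weighted by $\Lambda_{{\bf w},\sigma}$ and $\Lambda_{\nabla\cdot{\bf w},K}$. Your sketch never isolates this cross term, so as written the argument would leave an unaccounted remainder and provide no mechanism generating the structure of $\eta_{C,K}$. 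The fix is straightforward once you recognize that $\eta_{C,K}$ measures the interaction of the nonconformity $\tilde p_h-s$ with the test function through the convection operator, not a reorganization of the bulk residual.
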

\begin{theorem} \label{Global upwind} {\rm (Global
error estimate for the upwind-weighted scheme)}\ {\it Let $p\in
H_{0}^{1}(\Omega)$ be the weak solution of the problem
(\ref{continuous problem 2}), ${\bf u}=-S\nabla p$ be the continuous
stress vector, $({\bf u}_{h},p_{h})$ be the solution of the
upwind-weighted mixed scheme (\ref{upwind-weighted scheme
1})-(\ref{upwind-weighted scheme 2}).  Let   $\mathcal{E}$ be the
error of  the stress and displacement in the   weighted norm defined
in (\ref{new-1}), $\eta_{D,K},\eta_{R,K},\eta_{U,K}$, $\eta_{NC,K}$,
and $\eta_{C,K}$ are the corresponding elementwise displacement
estimator, residual estimator, upwind estimator, convection
estimator, and nonconforming estimator, defined in (\ref{new
0})-(\ref{new 1}) and (\ref{new 2})-(\ref{new 4}), respectively.
Then it holds
\begin{equation}\label{global error estimate 2}
\mathcal{E}\leq
c_{2}\{\sum\limits_{K\in\mathcal{T}_{h}}(\eta_{D,K}^{2}+\eta_{R,K}^{2}+\eta_{C,K}^{2}
+\eta_{NC,K}^{2}+\eta_{U,K}^{2})\}^{1/2}.
\end{equation}}
\end{theorem}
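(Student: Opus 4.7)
The plan is to reduce Theorem \ref{Global upwind} to Theorem \ref{global error estimate} plus a single extra term that accounts for the difference between the centered and upwind convective discretizations. I would follow the same outline as in the proof of Theorem \ref{global error estimate}: use the postprocessed approximation $\tilde p_h$ as an intermediate object, so that on each $K\in\mathcal{T}_h$ the stress error collapses to $-S\nabla(p-\tilde p_h)|_K = {\bf u}-{\bf u}_h$ by (\ref{postprocessed variable 1}), and the displacement error $p-p_h$ splits via $p-p_h=(p-\tilde p_h)+(\tilde p_h-p_h)$, with the second piece being controlled by $\eta_{D,K}$ through (\ref{postprocessed variable 2}), (\ref{Poicare inequality}) and (\ref{postprocessed variable 1}). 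The job is then to bound $|||p-\tilde p_h|||_\Omega$, and the abstract error estimates from Section 5 (applied to $\tilde p_h\in W_0(\mathcal{T}_h)$) will express this in terms of a residual part and a nonconformity part; the latter yields $\eta_{NC,K}^2+\eta_{C,K}^2$ exactly as in the centered case.

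The new work lies in the residual part, where the discrete equation (\ref{upwind-weighted scheme 2}) is tested against an averaged test function and compared to the continuous equation (\ref{continuous problem 2}). The crux is the algebraic identity, valid because $p_h,\varphi_K$ are constants on each $K$ and ${\bf w}\in RT_0(\mathcal{T}_0)$,
\begin{equation*}
\sum_{K\in\mathcal{T}_h}\sum_{\sigma\in\varepsilon_K}\hat p_\sigma w_{K,\sigma}\varphi_K
= -\bigl(S^{-1}{\bf u}_h\cdot{\bf w},\varphi_h\bigr)_\Omega+\bigl((\nabla\cdot{\bf w})p_h,\varphi_h\bigr)_\Omega
+\sum_{K\in\mathcal{T}_h}\sum_{\sigma\in\varepsilon_K}\hat{\hat p}_\sigma w_{K,\sigma}\varphi_K,
\end{equation*}
which follows by writing $\hat p_\sigma = \tfrac{p_K+p_L}{2}+\hat{\hat p}_\sigma$ (and the corresponding boundary analogue using (\ref{hat-hat-p2})) and then folding the centered edge contribution back to a volume term via integration by parts. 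Substituting this identity into (\ref{upwind-weighted scheme 2}) recovers the centered scheme (\ref{centered mixed scheme 2}) \emph{plus} the edge residual $\sum_K\sum_\sigma \hat{\hat p}_\sigma w_{K,\sigma}\varphi_K$, so that Theorem \ref{global error estimate}'s bound immediately accounts for $\eta_{D,K}^2+\eta_{R,K}^2+\eta_{C,K}^2+\eta_{NC,K}^2$ and only the upwind correction remains.

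To control the correction, I would pair it with $\varphi-\varphi_K$ where $\varphi\in H_0^1(\Omega)$ is the conforming test function produced by the abstract framework (so only $\varphi-\varphi_K$ survives the averaging to $\varphi_K$ in the discrete equation). A scaled trace inequality together with (\ref{Friedrichs inequality}) gives $\|\varphi-\varphi_K\|_\sigma\lesssim h_\sigma^{1/2}\|\nabla\varphi\|_K$, and pulling in the diffusion weight $c_{S,K}^{-1/2}$ to promote $\|\nabla\varphi\|_K$ into $\|S^{1/2}\nabla\varphi\|_K\le|||\varphi|||_K$ produces the weight $h_K/c_{S,K}$ and the norm $\|\hat{\hat p}_\sigma\|_\sigma^2$ appearing in (\ref{new 2}). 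The second piece $h_\sigma\|S^{-1}{\bf u}_h\|_{\omega_\sigma}^2$ in $\eta_{U,K}^2$ arises because on the discrete side one must also estimate the analogous correction where $\varphi$ is replaced by $\tilde p_h$ (used as the nonconforming lift of $p_h$) and invoke $\|\tilde p_h-p_K\|_K\lesssim h_K\|S^{-1}{\bf u}_h\|_K$ via (\ref{Poicare inequality}) and (\ref{postprocessed variable 1})--(\ref{postprocessed variable 2}), together with a standard trace extension on the patch $\omega_\sigma$.

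The main obstacle I anticipate is \emph{not} the algebraic reduction (which is essentially a bookkeeping step) but engineering the right weights so that the upwind correction is absorbed into a single estimator of the claimed form: one must simultaneously produce $(\mathbf{w}\cdot\mathbf{n})|_\sigma$ (which comes from $w_{K,\sigma}/|\sigma|$ and Cauchy--Schwarz on $\sigma$), the factor $h_K/c_{S,K}$ (which requires splitting the estimate and choosing the diffusion-scaled bound rather than a reaction-scaled one of the min defining $\alpha_K$), and the patch term $\|S^{-1}{\bf u}_h\|_{\omega_\sigma}$ (which needs the postprocessed-to-discrete passage to be localized to $\omega_\sigma$ rather than all of $\Omega$). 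Assembling the squares by Cauchy--Schwarz over $K$ and summing the edge contributions with the factor $\delta_\sigma$ absorbing the two-element overlap then yields (\ref{global error estimate 2}) with a constant $c_2$ depending only on shape regularity.
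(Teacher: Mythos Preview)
Your overall strategy matches the paper's: reduce $\mathcal{E}$ to $|||p-\tilde p_h|||_\Omega$ plus $\eta_{D,K}$ via the postprocessing, then invoke the abstract estimate of Lemma~\ref{abstract error estimate 1} and bound $T_R,T_C,|||\tilde p_h-s|||_\Omega$ exactly as in the centered case, leaving only the upwind term $T_U(\varphi)$ to control. That part is fine.

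The gap is in your treatment of $T_U$. Your claimed identity
\[
\sum_{K}\sum_{\sigma\in\varepsilon_K}\hat p_\sigma w_{K,\sigma}\varphi_K
= -\bigl(S^{-1}{\bf u}_h\cdot{\bf w},\varphi_h\bigr)_\Omega+\bigl((\nabla\cdot{\bf w})p_h,\varphi_h\bigr)_\Omega
+\sum_{K}\sum_{\sigma\in\varepsilon_K}\hat{\hat p}_\sigma w_{K,\sigma}\varphi_K
\]
is not correct. Using (\ref{postprocessed variable 1})--(\ref{postprocessed variable 2}) and integration by parts one finds
\[
-\bigl(S^{-1}{\bf u}_h\cdot{\bf w},\varphi_h\bigr)_\Omega+\bigl((\nabla\cdot{\bf w})p_h,\varphi_h\bigr)_\Omega
=\sum_K\sum_{\sigma\in\varepsilon_K}\tilde p_\sigma\, w_{K,\sigma}\varphi_K,
\]
where $\tilde p_\sigma$ is the side mean of $\tilde p_h$, not $\tfrac{p_K+p_L}{2}$. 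Hence the ``extra'' term is $\sum_K\sum_\sigma(\hat p_\sigma-\tilde p_\sigma)w_{K,\sigma}\varphi_K$, which splits as $\hat{\hat p}_\sigma+\hat p_{\omega_\sigma}$ with
\[
\hat p_{\omega_\sigma}=\tfrac{1}{2}(p_K-\tilde p_\sigma)+\tfrac{1}{2}(p_L-\tilde p_\sigma)
\]
on interior sides (and the obvious boundary analogue). Your identity drops $\hat p_{\omega_\sigma}$ entirely.

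This missing piece is precisely what generates the second contribution $h_\sigma\|S^{-1}{\bf u}_h\|_{\omega_\sigma}^2$ in $\eta_{U,K}^2$: one bounds $|\hat p_{\omega_\sigma}|$ by $h_\sigma^{1-d/2}\|S^{-1}{\bf u}_h\|_{\omega_\sigma}$ via (\ref{Friedrichs inequality}) and (\ref{postprocessed variable 1})--(\ref{postprocessed variable 2}), then pairs with $\varphi_K-\varphi$ through a trace inequality. Your alternative explanation---that this term ``arises because on the discrete side one must also estimate the analogous correction where $\varphi$ is replaced by $\tilde p_h$''---has no counterpart in the abstract framework; $\varphi$ is a test function in $H_0^1(\Omega)$ throughout and is never replaced by $\tilde p_h$. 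Once you correct the identity to include $\hat p_{\omega_\sigma}$ and estimate both pieces of $\hat{\hat p}_\sigma+\hat p_{\omega_\sigma}$ against $\varphi_K-\varphi$, the argument closes exactly as the paper's Lemma~\ref{upwind estimator (global)}.
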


\begin{remark}\label{robust Remark}
\ {\it We note that the constants $c_{1}$ in (\ref{global error estimate
1}) and $c_{2}$ in (\ref{global error estimate 2}) only depend on the spatial dimension and the shape regularity parameter of the triangulation $\mathcal{T}_{h}$, and are
independent of the coefficients $S, {\bf w}, r$. In this sense, the proposed estimators are robust  with respect to all the coefficients.}
\end{remark}
%This means that the proposed estimators are
%efficient with respect to the strongly discontinuous coefficients.

\begin{remark}\ {\it In \cite{Carstensen0} Carstensen presented a posteriori error
estimates of the
Raviart-Thomas,
  Brezzi-Douglas-Morini, Brezzi-Douglas-Fortin-Marini elements $(M_{h},L_{h})$
  for the diffusion equations (the case ${\bf w}=r=0$ in the model (\ref{convection-diffufsion-equations1})).
%  , under the weak regularity
%assumption concerning the diffusion-dispersion tensor $S$ in
%two-dimension case.
 In his estimators,  the term
$\displaystyle\min_{v_{h}\in L_{h}}||h(S^{-1}{\bf
u}_{h}-\nabla_{h}v_{h})||_{\Omega}$ is included.  In practice one
may substitute it with the term
$||h(S^{-1}{\bf u}_{h}-\nabla_{h}p_{h})||_{\Omega}$, where
 $({\bf u}_{h},p_{h})\in M_{h}\times L_{h}$ is a pair
 of finite element solutions.  For the lowest order
Raviart-Thomas element, it holds $\nabla_{h}p_{h}=0$, then $||h(S^{-1}{\bf
u}_{h}-\nabla_{h}p_{h})||_{\Omega}$ is reduced to $||hS^{-1}{\bf
u}_{h}||_{\Omega}$, which shows that occurrence of $||hS^{-1}{\bf
u}_{h}||_{\Omega}$ is reasonable in the a posteriori error
estimators $\eta_{D,K}$ defined in  (\ref{new 0}).
In addition,  we note that  the postprocessing (\ref{postprocessed variable 1}) can remove the term $||h
{\rm curl}(S^{-1}{\bf u}_{h})||_{\Omega}$, which is also contained in Carstensen's estimators.}
\end{remark}

The global error estimates above show that the a posteriori
indicator over each element consists of a series of estimators.
Thus, the local efficiency of each component ensures the local
efficiency of the a posteriori indicator over an element. Here, we
point out the local efficiency is in the sense that its converse
estimate holds up to a different multiplicative constant.

%Firstly, we state the local efficiency of the displacement and
%residual estimators over each element. To this end, denote

\begin{theorem} \label{new 5}{\rm (Local efficiency for the displacement and
residual estimators)}\ {\it For $K\in\mathcal{T}_{h}$, let $\eta_{D,K}$ and $\eta_{R,K}$
denote the elementwise displacement and residual estimators defined
in (\ref{new 0}) and (\ref{new 1}), respectively. Then it holds
\begin{equation}\label{new 6}
(\eta_{D,K}^{2}+\eta_{R,K}^{2})^{1/2}\leq
c_{3}\alpha_{*,K}\mathcal{E}_{K}
\end{equation}}
with \begin{equation*}
\begin{array}{lll}
\alpha_{*,K}
:&=&\max\{\sqrt{\frac{C_{S,K}}{c_{S,K}}}+\frac{h_{K}C_{{\bf
w},K}}{c_{S,K}},\frac{h_{K}C_{{\bf w},r,K}}{\sqrt{c_{{\bf
w},r,K}c_{S,K}}}\}\vspace{2mm}\\
& &+\max\{\frac{h_{K}^{2}C_{{\bf
w},r,K}}{c_{S,K}},\frac{h_{K}C_{{\bf w},r,K}}{\sqrt{c_{S,K}c_{{\bf
w},r,K}}}\}+\max\{\frac{h_{K}\sqrt{c_{{\bf
w},r,K}}}{\sqrt{c_{S,K}}},1\}.
\end{array}
\end{equation*}

\end{theorem}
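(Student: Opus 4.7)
The plan is to split $\eta_{D,K}^2+\eta_{R,K}^2$ into three ingredients --- the residual term $\alpha_K\|R_K\|_K$ with $R_K := f-\nabla\cdot{\bf u}_h + (S^{-1}{\bf u}_h)\cdot{\bf w} - (r+\nabla\cdot{\bf w})p_h$, the weighted gradient term $\beta_K\|S^{-1}{\bf u}_h\|_K$, and the displacement estimator $\eta_{D,K}=\sqrt{c_{{\bf w},r,K}}\,h_K\|S^{-1}{\bf u}_h\|_K$ --- bound each piece separately by a constant multiple of $\mathcal{E}_K$, and finally read off the worst scalar coefficient, which will turn out to match $\alpha_{*,K}$ up to the shape-regularity constant $c_3$.

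For the residual piece I would apply Verf\"{u}rth's interior-bubble technique. Using the strong form of (\ref{continuous problem 2}) with ${\bf u}=-S\nabla p$, $R_K$ can first be rewritten entirely in terms of errors:
\begin{equation*}
R_K = \nabla\cdot({\bf u}-{\bf u}_h) - (S^{-1}({\bf u}-{\bf u}_h))\cdot{\bf w} + (r+\nabla\cdot{\bf w})(p-p_h).
\end{equation*}
By (D3)--(D5) and the fact that ${\bf u}_h,{\bf w}\in RT_0(\mathcal{T}_h)$, $p_h\in P_0(\mathcal{T}_h)$, and $S|_K$ is constant, $R_K|_K$ is a polynomial of bounded degree, so testing against $\varphi:=R_K b_K$, with $b_K$ the standard interior bubble, is legitimate via the norm equivalence $\|R_K\|_K^2\leq C(R_K,\varphi)_K$ and the inverse estimate $\|\nabla\varphi\|_K\leq Ch_K^{-1}\|R_K\|_K$. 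Integrating the divergence term by parts (no boundary contribution since $\varphi|_{\partial K}=0$), bounding the remaining two by Cauchy--Schwarz, and using $|{\bf u}-{\bf u}_h|\leq\sqrt{C_{S,K}}|S^{-1/2}({\bf u}-{\bf u}_h)|$ together with $|S^{-1}({\bf u}-{\bf u}_h)|\leq c_{S,K}^{-1/2}|S^{-1/2}({\bf u}-{\bf u}_h)|$, I would obtain
\begin{equation*}
\|R_K\|_K \leq C\Bigl(\frac{\sqrt{C_{S,K}}}{h_K}+\frac{C_{{\bf w},K}}{\sqrt{c_{S,K}}}\Bigr)\|S^{-1/2}({\bf u}-{\bf u}_h)\|_K + C\frac{C_{{\bf w},r,K}}{\sqrt{c_{{\bf w},r,K}}}\sqrt{c_{{\bf w},r,K}}\|p-p_h\|_K.
\end{equation*}
Multiplication by $\alpha_K=\min\{h_K/\sqrt{c_{S,K}},1/\sqrt{c_{{\bf w},r,K}}\}$ then absorbs $\sqrt{C_{S,K}}/h_K$, $C_{{\bf w},K}/\sqrt{c_{S,K}}$, and $C_{{\bf w},r,K}/\sqrt{c_{{\bf w},r,K}}$ into the first and second maxima of $\alpha_{*,K}$.

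For the two terms involving $\|S^{-1}{\bf u}_h\|_K$ I would exploit the postprocessed approximation $\tilde{p}_h$. Since $\nabla\tilde{p}_h=-S^{-1}{\bf u}_h$ is polynomial of degree $\leq 1$ on $K$, $\tilde{p}_h$ is polynomial of degree $\leq 2$, and by (\ref{postprocessed variable 2}) its $K$-mean equals $p_K=p_h|_K$. A scaling/inverse argument on the finite-dimensional space of mean-zero quadratic polynomials yields $h_K\|\nabla\tilde{p}_h\|_K\leq C\|\tilde{p}_h-p_h\|_K$. Splitting $\tilde{p}_h-p$ into its $K$-mean $(p_h-p)_K$ and its zero-mean remainder and applying (\ref{Poicare inequality}) together with $\nabla(\tilde{p}_h-p)=S^{-1}({\bf u}-{\bf u}_h)$ gives $\|\tilde{p}_h-p\|_K\leq Ch_K/\sqrt{c_{S,K}}\|S^{-1/2}({\bf u}-{\bf u}_h)\|_K+\|p-p_h\|_K$, so the triangle inequality produces
\begin{equation*}
h_K\|S^{-1}{\bf u}_h\|_K \leq C\frac{h_K}{\sqrt{c_{S,K}}}\|S^{-1/2}({\bf u}-{\bf u}_h)\|_K + C\|p-p_h\|_K.
\end{equation*}
Multiplying by $\sqrt{c_{{\bf w},r,K}}$ bounds $\eta_{D,K}$ by $C\max\{h_K\sqrt{c_{{\bf w},r,K}/c_{S,K}},1\}\,\mathcal{E}_K$, i.e.\ the third maximum of $\alpha_{*,K}$; multiplying instead by $C_{{\bf w},r,K}\alpha_K$ and invoking $\alpha_K\leq\min\{h_K/\sqrt{c_{S,K}},1/\sqrt{c_{{\bf w},r,K}}\}$ bounds $\beta_K\|S^{-1}{\bf u}_h\|_K=C_{{\bf w},r,K}\alpha_K\cdot h_K\|S^{-1}{\bf u}_h\|_K$ by the second maximum of $\alpha_{*,K}$ times $\mathcal{E}_K$.

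The main technical obstacle is this coefficient bookkeeping: each constant produced by the bubble-function and inverse-Poincar\'{e} steps must be re-expressed so that the factors multiplying the two fundamental error quantities $\|S^{-1/2}({\bf u}-{\bf u}_h)\|_K$ and $\sqrt{c_{{\bf w},r,K}}\|p-p_h\|_K$ coincide with a term sitting inside one of the three maxima defining $\alpha_{*,K}$. The degenerate case $c_{{\bf w},r,K}=0$ requires a separate remark: by (D6) then $C_{{\bf w},r,K}=0$, so the last term in the rewritten $R_K$ disappears, $\eta_{D,K}=\beta_K=0$, and $\alpha_K=h_K/\sqrt{c_{S,K}}$ by convention, whereupon the proof collapses to the purely diffusive local efficiency estimate.
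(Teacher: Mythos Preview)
Your proposal is correct and follows the same overall architecture as the paper: split the estimator into the three pieces $\alpha_K\|R_K\|_K$, $\beta_K\|S^{-1}{\bf u}_h\|_K$, and $\eta_{D,K}$, bound each by $\mathcal{E}_K$ with an explicit scalar factor, and identify those factors with the three maxima in $\alpha_{*,K}$. Your treatment of $\alpha_K\|R_K\|_K$ via the interior bubble is exactly the ``standard argument'' the paper invokes for its Lemma~\ref{efficiency residual }.

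The one genuine difference is the bound on $h_K\|S^{-1}{\bf u}_h\|_K$. The paper (Lemma~\ref{efficiency volumn }) again uses a bubble function: it writes $S^{-1}{\bf u}_h=S^{-1}{\bf u}_h+\nabla p_h$, tests against $\psi_K S^{-1}{\bf u}_h$, splits off $S^{-1}({\bf u}_h-{\bf u})$, and integrates $(\psi_K S^{-1}{\bf u}_h,\nabla(p_h-p))_K$ by parts, reaching
\[
\|S^{-1}{\bf u}_h\|_K \le c\Bigl(c_{S,K}^{-1/2}\|S^{-1/2}({\bf u}-{\bf u}_h)\|_K + h_K^{-1}\|p-p_h\|_K\Bigr).
\]
You reach the identical inequality by a different route: the inverse inequality applied to the quadratic $\tilde{p}_h-p_h$ (whose gradient is $-S^{-1}{\bf u}_h$ and whose mean vanishes by the postprocessing~(\ref{postprocessed variable 2})), followed by a Poincar\'e/triangle argument on $\tilde{p}_h-p$. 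Your argument is slightly more elementary in that it avoids a second bubble construction, at the cost of invoking the postprocessed scalar $\tilde{p}_h$; the paper's version is self-contained in $({\bf u}_h,p_h)$. Either way the coefficient bookkeeping and the handling of the degenerate case $c_{{\bf w},r,K}=0$ via Assumption~(D6) are as you describe.
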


%For $K\in\mathcal{T}_{h}$, denote
%with 
%Let $\omega_{\sigma}$
%be the patch of $\sigma\in\varepsilon_{h}$, and denote

\begin{theorem}\label{new 7}{\rm (Local efficiency for the nonconforming and
convection estimators)}\ {\it Let $\eta_{NC,K}$ and $\eta_{C,K}$ be
the elementwise nonconforming and convection estimators defined in
(\ref{new 3}) and (\ref{new 4}), respectively.
Then it holds
\begin{equation}\label{new 8}
\{\eta_{NC,K}^{2}+\eta_{C,K}^{2}\}^{1/2}\leq
c_{4}\{\beta_{*,K}^{2}\mathcal{E}_{K}^{2}+
\sum\limits_{\sigma\in\varepsilon_{K}}
c_{\omega_{\sigma}}^{2}(\Lambda_{\sigma}+\Lambda_{{\bf
w},\sigma}^{2})||S^{-1/2}({\bf u}-{\bf
u}_{h})||_{\omega_{\sigma}}^{2}\}^{1/2},
\end{equation}
}
where 
\begin{equation*}
\beta_{*,K}^{2} :=(\Lambda_{{\bf w},r,K}+\Lambda_{\nabla\cdot{\bf
w},K}^{2})\max\{h_{K}^{2}/c_{S,K},1/c_{{\bf w},r,K}\},
\end{equation*}
\begin{equation*}
c_{\omega_{\sigma}} :=\left \{ \begin{array}{ll}
  \max(c_{S,K}^{-1/2},c_{S,L}^{-1/2})\ \  & \mbox{if}\;  \ \ \sigma=\bar{K}\cap \bar{L},\\
 c_{S,K}^{-1/2}\ \ \ \  & \mbox{if}\; \ \
 \sigma\in\varepsilon_{K}\cap\varepsilon_{h}^{\rm ext},
 \end{array}\right.
\end{equation*}
and $\Lambda_{{\bf w},r,K}, \Lambda_{\nabla\cdot{\bf
w},K}$ are the same as in (\ref{new 3})-(\ref{new 4}).
\end{theorem}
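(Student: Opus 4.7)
The proof hinges on the postprocessed scalar displacement $\tilde p_h$ from Section 3, which on each $K\in\mathcal{T}_h$ satisfies $\nabla\tilde p_h|_K = -S_K^{-1}{\bf u}_h$ and $(\tilde p_h)_K = p_h|_K$. Since $\nabla p = -S^{-1}{\bf u}$, this yields the fundamental identity $\nabla(\tilde p_h - p)|_K = -S^{-1}({\bf u}_h - {\bf u})|_K$. Moreover, because $p\in H_0^1(\Omega)$ the tangential trace $\gamma_{{\bf t}_{\sigma}}(\nabla p)$ is continuous across interior sides and vanishes on boundary sides, so that $[\gamma_{{\bf t}_{\sigma}}(S^{-1}{\bf u})]=0$, and $S^{-1}{\bf u}$ is curl-free. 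These two facts drive both halves of the estimate.

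I would split the right-hand sides of $\eta_{NC,K}^2$ and $\eta_{C,K}^2$ into a volume part $h_K^2\|S^{-1}{\bf u}_h\|_K^2$ (weighted by $\Lambda_{{\bf w},r,K}$ and $\Lambda_{\nabla\cdot{\bf w},K}^2$) and a side part $h_\sigma\|[\gamma_{{\bf t}_{\sigma}}(S^{-1}{\bf u}_h)]\|_\sigma^2$ (weighted by $\Lambda_\sigma$ and $\Lambda_{{\bf w},\sigma}^2$). For the volume part, write $\|S^{-1}{\bf u}_h\|_K = \|\nabla\tilde p_h\|_K$ and use an inverse inequality on the quadratic polynomial $\tilde p_h|_K$ to obtain $h_K\|\nabla\tilde p_h\|_K\leq C\|\tilde p_h - p_h\|_K$. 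Splitting $\|\tilde p_h - p_h\|_K\leq\|\tilde p_h - p\|_K + \|p - p_h\|_K$ and applying the Poincar\'e inequality \eqref{Poicare inequality} to the mean-zero part of $\tilde p_h - p$ (whose mean is $p_h|_K - p_K$, with $|p_h|_K - p_K|^2|K|\leq\|p-p_h\|_K^2$ by orthogonality of the mean) gives $\|\tilde p_h - p\|_K\leq Ch_K\|S_K^{-1}({\bf u}_h - {\bf u})\|_K + \|p-p_h\|_K$. Using $\|S_K^{-1}{\bf v}\|_K^2\leq c_{S,K}^{-1}\|S^{-1/2}{\bf v}\|_K^2$ and, for $c_{{\bf w},r,K}>0$, $\|p-p_h\|_K^2\leq c_{{\bf w},r,K}^{-1}\mathcal{E}_K^2$, I obtain
\begin{equation*}
h_K^2\|S^{-1}{\bf u}_h\|_K^2\leq C\max\{h_K^2/c_{S,K},\,1/c_{{\bf w},r,K}\}\mathcal{E}_K^2,
\end{equation*}
with the degenerate case $c_{{\bf w},r,K}=0$ absorbed through Assumption (D6).

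The side part is the technical heart. Let $b_\sigma$ be the standard side bubble on $\omega_\sigma$ and let $\phi$ be the polynomial bubble extension of $[\gamma_{{\bf t}_{\sigma}}(S^{-1}{\bf u}_h)]b_\sigma$ into $\omega_\sigma$, compactly supported there. Polynomial norm equivalence gives $\|[\gamma_{{\bf t}_{\sigma}}(S^{-1}{\bf u}_h)]\|_\sigma^2\leq C\int_\sigma [\gamma_{{\bf t}_{\sigma}}(S^{-1}{\bf u}_h)]^2\,b_\sigma\,dS$. A piecewise Green identity for the Curl operator on $\omega_\sigma$ rewrites the right-hand side as $-\int_{\omega_\sigma} S^{-1}({\bf u}_h-{\bf u})\cdot\mathrm{Curl}\,\phi\,d{\bf x}$: the piecewise-curl volume terms vanish because $S^{-1}{\bf u}_h|_K=-\nabla\tilde p_h|_K$ is a gradient, and the only surviving boundary contribution is the desired jump on $\sigma$ since $\phi$ vanishes on $\partial\omega_\sigma\setminus\sigma$ and $[\gamma_{{\bf t}_{\sigma}}(S^{-1}{\bf u})]=0$. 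Cauchy--Schwarz with the standard bubble scalings $\|\phi\|_{\omega_\sigma}\leq Ch_\sigma^{1/2}\|[\gamma_{{\bf t}_{\sigma}}(S^{-1}{\bf u}_h)]\|_\sigma$ and $\|\mathrm{Curl}\,\phi\|_{\omega_\sigma}\leq Ch_\sigma^{-1/2}\|[\gamma_{{\bf t}_{\sigma}}(S^{-1}{\bf u}_h)]\|_\sigma$, followed by absorption of one factor, delivers
\begin{equation*}
h_\sigma\|[\gamma_{{\bf t}_{\sigma}}(S^{-1}{\bf u}_h)]\|_\sigma^2\leq Cc_{\omega_\sigma}^2\|S^{-1/2}({\bf u}_h-{\bf u})\|_{\omega_\sigma}^2,
\end{equation*}
where $c_{\omega_\sigma}$ arises from $\|S^{-1}\cdot\|_K^2\leq c_{S,K}^{-1}\|S^{-1/2}\cdot\|_K^2$ on each side of $\sigma$. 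Multiplying by $\Lambda_\sigma+\Lambda_{{\bf w},\sigma}^2$ and summing over $\sigma\in\varepsilon_K$, combined with the volume bound multiplied by $\Lambda_{{\bf w},r,K}+\Lambda_{\nabla\cdot{\bf w},K}^2$, assembles \eqref{new 8}. The chief obstacle is precisely this jump estimate: $\gamma_{{\bf t}_{\sigma}}(S^{-1}{\bf u})$ is only an $H^{-1/2}(\sigma)$ distribution, so no direct $L^2$ trace inequality for $S^{-1}({\bf u}_h-{\bf u})$ at $\sigma$ is available; the bubble-Curl duality, keyed on $\mathrm{curl}(S^{-1}{\bf u})=0$, is what transfers the estimate from the inaccessible trace to a controllable volume $L^2$-norm on $\omega_\sigma$ while preserving the local weight $c_{\omega_\sigma}$ across jumps in $S$.
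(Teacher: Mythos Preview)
Your proof is correct and follows the same overall architecture as the paper: decompose $\eta_{NC,K}^{2}+\eta_{C,K}^{2}$ into the volume contribution $h_K^2\|S^{-1}{\bf u}_h\|_K^2$ and the tangential-jump contribution $h_\sigma\|[\gamma_{{\bf t}_\sigma}(S^{-1}{\bf u}_h)]\|_\sigma^2$, bound each separately, and reassemble. For the jump term your bubble--Curl duality is precisely the ``standard argument'' the paper invokes (without details) as Lemma~\ref{efficiency jump}.

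The one genuine methodological difference is in the volume bound. The paper's Lemma~\ref{efficiency volumn } tests $S^{-1}{\bf u}_h$ against the weighted function $\psi_K S^{-1}{\bf u}_h$ with an element bubble $\psi_K$, then integrates by parts against $p_h-p$ and uses an inverse estimate on $\nabla\cdot(\psi_K S^{-1}{\bf u}_h)$. You instead exploit directly that $\tilde p_h|_K$ is a quadratic polynomial: an inverse inequality gives $h_K\|\nabla\tilde p_h\|_K\leq C\|\tilde p_h-p_h\|_K$, and Poincar\'e on the mean-free part of $\tilde p_h-p$ converts this into the same bound $h_K\|S^{-1}{\bf u}_h\|_K\leq C\max\{h_K/\sqrt{c_{S,K}},1/\sqrt{c_{{\bf w},r,K}}\}\mathcal{E}_K$. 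Your route is a bit more elementary (no bubble needed for the volume term) and leans more heavily on the postprocessing, while the paper's bubble argument would extend verbatim to settings where $S^{-1}{\bf u}_h$ is polynomial but no scalar potential is available; for the present lowest-order Raviart--Thomas setting they are equivalent.
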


 We finally need the following quantities for the local efficiency of the upwind estimator over
an element, where $\nu_{\sigma}$ is  given in (\ref{upwind-weighted scheme 5})
for each side $\sigma\in\varepsilon_{h}$. 
\begin{equation*}
\lambda_{\sigma} :=\left \{ \begin{array}{ll}\frac{|({\bf
w}\cdot{\bf
n})|_{\sigma}|}{\sqrt{c_{S,K}}}\left((\frac{1}{2}-\nu_{\sigma})\max(\frac{1}{\sqrt{c_{S,K}}},\frac{1}{\sqrt{c_{S,L}}})+
\max(\frac{h_{K}}{\sqrt{c_{S,K}}},\frac{h_{L}}{\sqrt{c_{S,L}}})\right)\ & \mbox{if}\; \ \sigma=\bar{K}\cap \bar{L},\vspace{2mm}\\
\frac{|({\bf w}\cdot{\bf
n})|_{\sigma}|}{\sqrt{c_{S,K}}}\left((1-\nu_{\sigma})\frac{1}{\sqrt{c_{S,K}}}+
\frac{h_{K}}{\sqrt{c_{S,K}}}\right)\ & \mbox{if}\;\ \
\sigma\in\varepsilon_{K}^{\rm ext},
 \end{array}\right.
\end{equation*}
\begin{equation*}
\rho_{\sigma} :=\left \{ \begin{array}{ll}\frac{|({\bf w}\cdot{\bf
n})|_{\sigma}|}{\sqrt{c_{S,K}}}\left((\frac{1}{2}-\nu_{\sigma})|\sigma|^{-\frac{1}{2}}+1\right)\max(\frac{1}{\sqrt{c_{{\bf
w},r,K}}},\frac{1}{\sqrt{c_{{\bf w},r,L}}})\ \ & \mbox{if}\;\ \ \sigma=\bar{K}\cap \bar{L},\vspace{2mm}\\
\frac{|({\bf w}\cdot{\bf
n})|_{\sigma}|}{\sqrt{c_{S,K}}}\left((1-\nu_{\sigma})|\sigma|^{-\frac{1}{2}}+1\right)\frac{1}{\sqrt{c_{{\bf
w},r,K}}} \ \ & \mbox{if}\;\ \sigma\in\varepsilon_{K}^{\rm ext},
 \end{array}\right.
\end{equation*}
and
\begin{equation*}
\mathcal{E}_{D,\omega_{\sigma}} :=\left \{ \begin{array}{ll}
\left(c_{{\bf w},r,K}||p-p_{h}||_{K}^{2}+c_{{\bf
w},r,L}||p-p_{h}||_{L}^{2}\right)^{1/2} \ \ & \mbox{if}\; \ \
\sigma=\bar{K}\cap \bar{L},\vspace{2mm}\\
\sqrt{c_{{\bf w},r,K}}||p-p_{h}||_{K}\ \ & \mbox{if}\;\ \
\sigma\in\varepsilon_{K}^{\rm ext}.
\end{array}\right.
\end{equation*}
\begin{theorem}\label{new 9}{\rm (Local efficiency for the upwind estimator)}\ {\it
Let  $\eta_{U,K}$ be the elementwise upwind estimator defined in
(\ref{new 2}). Then, it holds
\begin{equation}\label{new 10}
\eta_{U,K}\leq
c_{5}\sum\limits_{\sigma\in\varepsilon_{K}}\left(\lambda_{\sigma}
||S^{-1/2}({\bf u}-{\bf
u}_{h})||_{\omega_{\sigma}}+\rho_{\sigma}\mathcal{E}_{D,\omega_{\sigma}}\right).
\end{equation}}
\end{theorem}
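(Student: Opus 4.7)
The plan is to isolate the contribution of each face $\sigma\in\varepsilon_K$ to $\eta_{U,K}^2$, namely
$$
T_\sigma^2 := \frac{h_K(({\bf w}\cdot{\bf n})|_\sigma)^2}{c_{S,K}}\bigl(\|\hat{\hat{p}}_\sigma\|_\sigma^2 + h_\sigma\|S^{-1}{\bf u}_h\|_{\omega_\sigma}^2\bigr),
$$
and to establish $T_\sigma\le c_5\bigl(\lambda_\sigma\|S^{-1/2}({\bf u}-{\bf u}_h)\|_{\omega_\sigma}+\rho_\sigma\mathcal{E}_{D,\omega_\sigma}\bigr)$ for every $\sigma\in\varepsilon_K$; the desired (\ref{new 10}) then follows from $\eta_{U,K}=(\sum_\sigma T_\sigma^2)^{1/2}\le\sum_\sigma T_\sigma$. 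Interior and boundary faces will be treated in parallel, using $\|\hat{\hat{p}}_\sigma\|_\sigma^2=(1/2-\nu_\sigma)^2|\sigma|(p_K-p_L)^2$ in the interior and $\|\hat{\hat{p}}_\sigma\|_\sigma^2\le(1-\nu_\sigma)^2|\sigma|p_K^2$ on $\varepsilon_K^{\rm ext}$ (the latter because $p$ vanishes on $\partial\Omega$).

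The key new ingredient is the purely local bound
$$
\|S^{-1}{\bf u}_h\|_K \;\le\; C\bigl(c_{S,K}^{-1/2}\|S^{-1/2}({\bf u}-{\bf u}_h)\|_K + h_K^{-1}\|p-p_h\|_K\bigr),
$$
which is what replaces $\|S^{-1}{\bf u}_h\|_{\omega_\sigma}$ in $\eta_{U,K}$ by genuine error quantities. To prove it I exploit that $\tilde{p}_h|_K$ is a polynomial of degree at most two: since $S_K$ is constant symmetric and ${\bf u}_h|_K\in RT_0(K)$ is affine in $x$, the identity $-S_K\nabla\tilde{p}_h={\bf u}_h$ forces $\nabla\tilde{p}_h$ to be linear and hence $\tilde{p}_h$ to be quadratic. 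The standard inverse inequality on polynomials then yields $\|\nabla\tilde{p}_h\|_K\le Ch_K^{-1}\|\tilde{p}_h-p_K\|_K$, the constant shift by $p_K$ being legitimate. Split $\|\tilde{p}_h-p_K\|_K\le\|\tilde{p}_h-p\|_K+\|p-p_h\|_K$ and apply Poincar\'e's inequality to $\tilde{p}_h-p$ on $K$: its mean value equals $p_K-\bar{p}_K$, which is controlled by $|K|^{-1/2}\|p-p_h\|_K$, and its gradient norm equals $\|S^{-1}({\bf u}-{\bf u}_h)\|_K\le c_{S,K}^{-1/2}\|S^{-1/2}({\bf u}-{\bf u}_h)\|_K$, giving the displayed inequality.

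With this in hand, the ``residual'' part $h_Kh_\sigma({\bf w}\cdot{\bf n})^2/c_{S,K}\cdot\|S^{-1}{\bf u}_h\|_{\omega_\sigma}^2$ splits into two pieces: one fits into $\lambda_\sigma^2\|S^{-1/2}({\bf u}-{\bf u}_h)\|_{\omega_\sigma}^2$ through the $h_K/\sqrt{c_{S,K}}$ part of $\lambda_\sigma$, and the other into $\rho_\sigma^2\mathcal{E}_{D,\omega_\sigma}^2$ through the ``$+1$'' part of $\rho_\sigma$ combined with the elementary inequality $\max(c_{{\bf w},r,K}^{-1/2},c_{{\bf w},r,L}^{-1/2})(c_{{\bf w},r,K}\|p-p_h\|_K^2+c_{{\bf w},r,L}\|p-p_h\|_L^2)^{1/2}\ge\|p-p_h\|_{\omega_\sigma}$. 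For the ``jump'' part, the continuity of $p$ and $\nabla p_h=0$ yield $(p_K-p_L)|_\sigma=(p_h-p)|_K|_\sigma-(p_h-p)|_L|_\sigma$ pointwise, so the trace inequality applied to $p-p_h$ on $K$ and $L$ gives
$$
|\sigma|(p_K-p_L)^2 \;\lesssim\; h_K^{-1}\|p-p_h\|^2_{\omega_\sigma} + h_K\|\nabla p\|^2_{\omega_\sigma}.
$$
Writing $\nabla p=-S^{-1}({\bf u}-{\bf u}_h)-S^{-1}{\bf u}_h$ and applying the key bound a second time expresses $\|\nabla p\|_{\omega_\sigma}$ through error quantities. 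Multiplying by $h_K({\bf w}\cdot{\bf n})^2(1/2-\nu_\sigma)^2/c_{S,K}$ produces four pieces which, after matching $(1/2-\nu_\sigma)$ and $h_K/\sqrt{c_{S,K}}$ factors to those building up $\lambda_\sigma$ and $\rho_\sigma$, fall inside $c_5^2(\lambda_\sigma^2\|S^{-1/2}({\bf u}-{\bf u}_h)\|_{\omega_\sigma}^2+\rho_\sigma^2\mathcal{E}_{D,\omega_\sigma}^2)$ with a shape-regularity-only constant. The boundary case is identical with $(1/2-\nu_\sigma)$ replaced by $(1-\nu_\sigma)$.

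The hard part is the key local bound on $\|S^{-1}{\bf u}_h\|_K$: a naive $\|S^{-1}{\bf u}_h\|\le\|S^{-1}({\bf u}-{\bf u}_h)\|+\|S^{-1}{\bf u}\|$ leaves $\|S^{-1}{\bf u}\|=\|\nabla p\|$ uncontrolled by error quantities, and only the quadratic polynomial structure of $\tilde{p}_h$, coupled with its mean-value normalization $(1,\tilde{p}_h)_K/|K|=p_K$, lets the inverse inequality close the loop. All remaining steps are bookkeeping of the $h_K,h_\sigma,|\sigma|,c_{S,K},c_{{\bf w},r,K}$ factors against $\lambda_\sigma$, $\rho_\sigma$, and $\mathcal{E}_{D,\omega_\sigma}$.
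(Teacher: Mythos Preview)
Your argument is correct, but it differs from the paper's in two notable places.

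First, your ``key local bound'' $\|S^{-1}{\bf u}_h\|_K\le C\bigl(c_{S,K}^{-1/2}\|S^{-1/2}({\bf u}-{\bf u}_h)\|_K+h_K^{-1}\|p-p_h\|_K\bigr)$ is exactly the inequality~(\ref{efficiency volumn 4}) established inside the proof of Lemma~\ref{efficiency volumn }, but the paper obtains it by a bubble-function argument (testing against $\psi_K S^{-1}{\bf u}_h$ and integrating by parts), while you derive it from the inverse inequality on the quadratic polynomial $\tilde{p}_h$ together with Poincar\'e. Both routes give the same shape-regularity-only constant; yours exploits the polynomial structure of the postprocessing, the paper's is the standard Verf\"urth device.

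Second, and more substantive, your treatment of $\|\hat{\hat p}_\sigma\|_\sigma$ is genuinely different. The paper (Lemma~\ref{efficiency upwind-scheme}) uses the side-mean continuity $\int_\sigma\tilde p_h|_K=\int_\sigma\tilde p_h|_L$ of the postprocessed variable to rewrite $p_K-p_L$ via $p_h-\tilde p_h$, and then Friedrichs gives the clean bound $h_K^{1/2}\|\hat{\hat p}_\sigma\|_\sigma\lesssim|\sigma|^{-1/2}(1/2-\nu_\sigma)\bigl(h_K\|S^{-1}{\bf u}_h\|_K+h_L\|S^{-1}{\bf u}_h\|_L\bigr)$, which matches the $(1/2-\nu_\sigma)|\sigma|^{-1/2}$ and $(1/2-\nu_\sigma)\max(c_{S,K}^{-1/2},c_{S,L}^{-1/2})$ terms in $\rho_\sigma,\lambda_\sigma$ exactly. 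You instead invoke the continuity of the exact $p$ and apply the trace inequality to $p-p_h$, picking up $\|\nabla p\|_{\omega_\sigma}$, which you then unwind through the key bound a second time. This works, but the resulting pieces land in the \emph{other} parts of $\lambda_\sigma,\rho_\sigma$ (the $\max(h_K/\sqrt{c_{S,K}},h_L/\sqrt{c_{S,L}})$ and the ``$+1$'' parts) after discarding $(1/2-\nu_\sigma)\le 1$. So the paper's route explains why $\lambda_\sigma,\rho_\sigma$ carry the specific $(1/2-\nu_\sigma)$-weighted terms, whereas your route shows the estimate holds even without using those finer pieces of the constants.
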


\section{Preliminary results and remarks}
In this section, firstly we  show   the abstract error estimates
developed by {\it Vohral\'{\i}k} in \cite{Vohralik1}, and then make
some remarks on {\it Vohral\'{\i}k}'s a posteriori error estimators.
To this end, for any $\varphi\in H_{0}^{1}(\Omega)$ we define
\begin{equation}\label{residual component 1}
T_{R}(\varphi)
:=\sum\limits_{K\in\mathcal{T}_{h}}(f+\nabla\cdot(S\nabla
\tilde{p}_{h})-\nabla\cdot(\tilde{p}_{h}{\bf
w})-r\tilde{p}_{h},\varphi-\varphi_{K}),
\end{equation}
\begin{equation}\label{covection component 1}
T_{C}(\varphi,s)
:=\sum\limits_{K\in\mathcal{T}_{h}}(\nabla\cdot((\tilde{p}_{h}-s){\bf
w})-1/2(\tilde{p}_{h}-s)\nabla\cdot{\bf w},\varphi)_{K},
\end{equation}
\begin{equation}\label{upwind component 1}
T_{U}(\varphi) :=\sum\limits_{K\in\mathcal{T}_{h}}
\sum\limits_{\sigma\in\varepsilon_{K}}<(\hat{p}_{\sigma}-\tilde{p}_{h}){\bf
w}\cdot{\bf n},\varphi_{K}>_{\sigma},
\end{equation}
where $\varphi_{K}$  is the
mean of $\varphi$ over $K$, $s\in H_{0}^{1}(\Omega)$ is arbitrarily given, $\tilde{p}_{h}$ is the
postprocessed approximation solution given by
(\ref{postprocessed variable 1})-(\ref{postprocessed variable
2}), and $\hat{p}_{\sigma}$ is the weighted upwind value defined in
(\ref{upwind-weighted scheme 3})-(\ref{upwind-weighted scheme
4}).
\begin{lemma}\label{abstract error estimate}
{\rm (Abstract error estimates by Vohral\'{\i}k)}\ Let $p\in
H_{0}^{1}(\Omega)$ be the weak solution of the problem
(\ref{continuous problem 2}), and let $s\in H_{0}^{1}(\Omega)$ be
arbitrary.
Then it holds
\begin{equation}\label{abstract error estimates-centered}
|||p-\tilde{p}_{h}|||_{\Omega}\leq|||\tilde{p}_{h}-s|||_{\Omega}+
\sup_{\varphi\in
H_{0}^{1}(\Omega),|||\varphi|||_{\Omega}=1}\{T_{R}(\varphi)+T_{C}(\varphi,s)\}
\end{equation}
if $\tilde{p}_{h}$ is the postprocessed solution, given by
(\ref{postprocessed variable 1})-(\ref{postprocessed variable 2}),
of the centered mixed finite element scheme (\ref{centered mixed
scheme})-(\ref{centered mixed scheme 2}),  and holds
\begin{equation}\label{abstract error estimates-upwind}
|||p-\tilde{p}_{h}|||_{\Omega}\leq|||\tilde{p}_{h}-s|||_{\Omega}+
\sup_{\varphi\in
H_{0}^{1}(\Omega),|||\varphi|||_{\Omega}=1}\{T_{R}(\varphi)+T_{C}(\varphi,s)+T_{U}(\varphi)\}
\end{equation}
if $\tilde{p}_{h}$ is the postprocessed solution, given by
(\ref{postprocessed variable 1})-(\ref{postprocessed variable 2}), of the
upwind-weighted mixed finite element scheme (\ref{upwind-weighted
scheme 1})-(\ref{upwind-weighted scheme 2}).
\end{lemma}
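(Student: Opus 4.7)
The plan is to exploit the duality representation of the energy semi-norm on $H_0^1(\Omega)$ twice---first to bound $|||p-s|||_\Omega$, then to bound $|||p-\tilde{p}_h|||_\Omega^2$ directly via the identity $((p-\tilde{p}_h,p-\tilde{p}_h)) = ((p-\tilde{p}_h,p-s)) + ((p-\tilde{p}_h,s-\tilde{p}_h))$, where $((\cdot,\cdot))$ denotes the symmetric bilinear form associated with $|||\cdot|||$. Both pieces will be estimated through the same underlying residual identity (plus Cauchy--Schwarz on the second), and the final bound $|||p-\tilde{p}_h|||_\Omega \le |||\tilde{p}_h-s|||_\Omega + X$ (with $X$ denoting the stated supremum) will follow by completing a square in the resulting quadratic inequality.

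The central step is to establish the residual identity
\begin{equation*}
((p-\tilde{p}_h,\varphi))_\Omega = T_R(\varphi) + T_C(\varphi,s) + T_U(\varphi) + T'(\varphi;s-p), \qquad \varphi \in H_0^1(\Omega),
\end{equation*}
where $T_U\equiv 0$ in the centered case and $T'(\varphi;\psi) := \tfrac{1}{2}[(\mathbf{w}\cdot\nabla\psi,\varphi)-(\psi,\mathbf{w}\cdot\nabla\varphi)]$ is the antisymmetric convection form depending on the unknown error $s-p$. I would derive it from three ingredients: (i) the weak form $(f,\varphi)=(S\nabla p,\nabla\varphi)+(\nabla\cdot(p\mathbf{w}),\varphi)+(rp,\varphi)$; (ii) element-wise integration by parts $-(S\nabla_h\tilde{p}_h,\nabla\varphi)_\Omega = \sum_{K\in\mathcal{T}_h}(\nabla\cdot(S\nabla\tilde{p}_h),\varphi)_K$, in which the jumps of $\mathbf{u}_h\cdot\mathbf{n}$ across interior faces cancel because $\mathbf{u}_h \in RT_0(\mathcal{T}_h) \subset \mathbf{H}(\mathrm{div},\Omega)$ and $\varphi = 0$ on $\partial\Omega$; and (iii) the discrete orthogonality for the elementwise residual $R := f + \nabla\cdot(S\nabla\tilde{p}_h) - \nabla\cdot(\tilde{p}_h\mathbf{w}) - r\tilde{p}_h$, obtained by testing the mixed equation against $\mathbf{1}_K$, using $S^{-1}\mathbf{u}_h = -\nabla_h\tilde{p}_h$, $p_K = \tilde{p}_{h,K}$, and the piecewise constancy of $r$ and $\nabla\cdot\mathbf{w}$: one obtains $\int_K R\,d\mathbf{x}=0$ for the centered scheme (\ref{centered mixed scheme 2}), while the upwind face substitution $\tilde{p}_h \mapsto \hat{p}_\sigma$ in (\ref{upwind-weighted scheme 2}) yields $\int_K R\,d\mathbf{x} = \sum_{\sigma\in\varepsilon_K}\langle(\hat{p}_\sigma-\tilde{p}_h)\mathbf{w}\cdot\mathbf{n},1\rangle_\sigma$. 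The decomposition $\sum_K(R,\varphi)_K = \sum_K(R,\varphi-\varphi_K)_K + \sum_K\varphi_K\int_K R$ then produces $T_R(\varphi)$ from the oscillation part and, in the upwind case, the additional face contribution $T_U(\varphi)$. Finally, splitting $\nabla\cdot(p\mathbf{w})+rp$ into its symmetric piece $c_{\mathbf{w},r,K}p$ (absorbed into $((\cdot,\cdot))$) and its antisymmetric complement, then adding and subtracting the corresponding $\tilde{p}_h$ and $s$ contributions, produces the computable $T_C(\varphi,s)$ with $T'(\varphi;s-p)$ as the unavoidable remainder.

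The crucial algebraic observation is that $T'(\psi;\psi) = 0$ for every $\psi \in H_0^1(\Omega)$ by antisymmetry, so taking $\varphi = p-s$ in the identity annihilates $T'(\varphi;s-p)$. Combined with Cauchy--Schwarz and the definition of the supremum, this yields the two elementary inequalities
\begin{equation*}
|||p-s|||_\Omega^2 \le \bigl(X + |||\tilde{p}_h-s|||_\Omega\bigr)\,|||p-s|||_\Omega,
\end{equation*}
\begin{equation*}
|||p-\tilde{p}_h|||_\Omega^2 \le X\,|||p-s|||_\Omega + |||p-\tilde{p}_h|||_\Omega\,|||\tilde{p}_h-s|||_\Omega,
\end{equation*}
obtained by expanding $|||p-s|||^2 = ((p-s,p-s))$ and $|||p-\tilde{p}_h|||^2 = ((p-\tilde{p}_h,p-s)) + ((p-\tilde{p}_h,s-\tilde{p}_h))$ respectively. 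The first delivers $|||p-s|||_\Omega \le X + |||\tilde{p}_h-s|||_\Omega$; substituting into the second and completing a square in $|||p-\tilde{p}_h|||_\Omega$ gives $\bigl(|||p-\tilde{p}_h|||_\Omega - |||\tilde{p}_h-s|||_\Omega/2\bigr)^2 \le \bigl(X + |||\tilde{p}_h-s|||_\Omega/2\bigr)^2$, from which (\ref{abstract error estimates-centered}) and (\ref{abstract error estimates-upwind}) follow.

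The main obstacle is the convective bookkeeping: the symmetric/antisymmetric splitting of $\nabla\cdot(p\mathbf{w})+rp$ must be executed so that the symmetric piece $c_{\mathbf{w},r,K}p$ is absorbed into $((\cdot,\cdot))$ while the antisymmetric remainder $T'(\varphi;s-p)$ is isolated and then killed by the diagonal choice $\varphi = p-s$. Equally delicate is the upwind case, where the elementwise mean-zero property of $R$ fails; its defect must be tracked cleanly via $\sum_K\varphi_K\int_K R = T_U(\varphi)$ so that the extra face term appears with the correct sign in the final identity.
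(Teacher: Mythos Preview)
The paper does not supply its own proof of this lemma: it is quoted verbatim as a result of Vohral\'{\i}k (reference~\cite{Vohralik1} in the paper), so there is no in-paper argument to compare against. Your proposal is therefore being judged against the original source.

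Your strategy is correct and is essentially the one used by Vohral\'{\i}k. The three structural ingredients you isolate---(i) the residual identity for $((p-\tilde p_h,\varphi))$ built from the weak form, global integration by parts of $-( {\bf u}_h,\nabla\varphi)$ using ${\bf u}_h\in{\bf H}({\rm div},\Omega)$, and the elementwise mean-zero (resp.\ upwind-defect) property of the residual obtained by testing the discrete equation with $\mathbf 1_K$; (ii) the symmetric/antisymmetric splitting of the convection--reaction term so that the symmetric part is absorbed into $((\cdot,\cdot))$ and the antisymmetric remainder $T'(\varphi;s-p)$ vanishes for $\varphi=p-s$; and (iii) the quadratic-inequality/completing-the-square step---are exactly the bones of the original proof. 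Your bookkeeping of the upwind contribution, tracking $\sum_K\varphi_K\!\int_K R = T_U(\varphi)$, also matches the original argument.

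One small point worth tightening in a full write-up: the first of your two ``elementary inequalities'' (the bound on $|||p-s|||_\Omega$) is not logically needed as a separate step, since the same test function $\varphi=p-s$ already feeds directly into the second inequality via $((p-\tilde p_h,p-s))\le X\,|||p-s|||_\Omega$; you only then need $|||p-s|||_\Omega\le X+|||\tilde p_h-s|||_\Omega$ to close, which is precisely what the first inequality gives. So the two displays are not redundant, but you may wish to present them in the order ``derive the bound on $|||p-s|||$, then substitute into the expansion of $|||p-\tilde p_h|||^2$'' rather than announcing both at once.
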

\begin{remark}\
% {\rm (On postprocessed scalar variable)}
\  In Vohral\'{\i}k's work \cite{Vohralik1},  the modified
Oswald interpolation, $\mathcal{I}_{\rm MO}(\tilde{p}_{h})\in
H_{0}^{1}(\Omega)$, of $\tilde{p}_{h}$ is introduced to replace $s$
in the abstract error estimates (\ref{abstract error
estimates-centered})-(\ref{abstract error estimates-upwind}) so as
to obtain  computable estimates of the terms.
%$|||\tilde{p}_{h}-s|||_{\Omega}$.  In so doing, one needs to compute
%$\tilde{p}_{h}$ in the way (\ref{postprocessed variable
%1})-(\ref{postprocessed variable 2}) after ${\bf u}_{h}$ and $p_{h}$
%are obtained from the mixed finite element schemes, and to compute
%the modified Oswald interpolation $\mathcal{I}_{\rm
%MO}(\tilde{p}_{h})$ on the basis of $\tilde{p}_{h}$. Therefore,
%Vohral\'{\i}k actually give the energy error control over
%$p-\tilde{p}_{h}$, and it takes additional cost to compute
%$\tilde{p}_{h}$ and $\mathcal{I}_{\rm MO}(\tilde{p}_{h})$.}
\end{remark}

We now state our abstract error estimates for the global error of
stress and displacement  in the weighted norm.
\begin{lemma}\label{abstract error estimate 1}
({\rm Abstract error estimates for the global error})\
{\it Let $p\in H_{0}^{1}(\Omega)$ denote the weak
solution of the problem (\ref{continuous problem 2}), and  $s\in
H_{0}^{1}(\Omega)$ be arbitrary. Let   $\mathcal{E}$ be the global error defined in
(\ref{new-1}) and  $\eta_{D,K}$ be the elementwise displacement estimator
defined in (\ref{new 0}). Then it holds
\begin{equation}\label{abstract error estimate-centered 1}
\mathcal{E}\leq\sqrt{2}\{|||\tilde{p}_{h}-s|||_{\Omega}+
\sup_{\varphi\in
H_{0}^{1}(\Omega),|||\varphi|||_{\Omega}=1}(T_{R}(\varphi)+
T_{C}(\varphi,s))+(\sum\limits_{K\in\mathcal{T}_{h}}\eta_{D,K}^{2})^{1/2}\}
\end{equation}
if $\tilde{p}_{h}$ is the postprocessed solution, given by
(\ref{postprocessed variable 1})-(\ref{postprocessed variable 2}),
of the centered mixed finite (\ref{centered mixed
scheme})-(\ref{centered mixed scheme 2}), and holds
\begin{equation}\label{abstract error estimate-upwind 1}
\begin{array}{lll}
\mathcal{E}\leq\sqrt{2}\{|||\tilde{p}_{h}-s|||_{\Omega}&+&
\sup_{\varphi\in
H_{0}^{1}(\Omega),|||\varphi|||_{\Omega}=1}(T_{R}(\varphi)+T_{C}(\varphi,s)+T_{U}(\varphi))\vspace{2mm}\\
&+&\displaystyle(\sum\limits_{K\in\mathcal{T}_{h}}\eta_{D,K}^{2})^{1/2}\}
\end{array}
\end{equation}}
if  $\tilde{p}_{h}$ is the postprocessed solution, given by
(\ref{postprocessed variable 1})-(\ref{postprocessed variable 2}), of the
upwind-weighted mixed finite element scheme (\ref{upwind-weighted
scheme 1})-(\ref{upwind-weighted scheme 2}).
\end{lemma}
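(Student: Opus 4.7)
The plan is to reduce both (\ref{abstract error estimate-centered 1}) and (\ref{abstract error estimate-upwind 1}) to Vohral\'{\i}k's bound on $|||p - \tilde{p}_h|||_\Omega$ supplied by Lemma \ref{abstract error estimate}. The bridge is the postprocessed approximation $\tilde{p}_h$: conditions (\ref{postprocessed variable 1})--(\ref{postprocessed variable 2}) give elementwise $-S_K \nabla\tilde{p}_h|_K = {\bf u}_h$ and $p_h|_K = \frac{1}{|K|}\int_K \tilde{p}_h\,d{\bf x}$. Both identities are independent of which of the two mixed schemes produced $({\bf u}_h,p_h)$, so the structural part of the argument is identical for the centered and upwind-weighted cases; only the final invocation of Lemma \ref{abstract error estimate} changes.

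Next I would handle the two contributions to $\mathcal{E}^2$ separately. For the stress piece, the identity ${\bf u} - {\bf u}_h = -S \nabla(p - \tilde{p}_h)$ on each $K$ (using ${\bf u} = -S\nabla p$ and the first postprocessing relation) gives
$$\sum_{K\in\mathcal{T}_{h}}\|S^{-1/2}({\bf u}-{\bf u}_h)\|_K^2 = \sum_{K\in\mathcal{T}_{h}}(S\nabla(p-\tilde{p}_h),\nabla(p-\tilde{p}_h))_K,$$
which is exactly the diffusion part of $|||p - \tilde{p}_h|||_\Omega^2$. For the displacement piece, I would decompose $p - p_h = (p - \tilde{p}_h) + (\tilde{p}_h - p_h)$ and apply $\|a+b\|^2 \leq 2\|a\|^2 + 2\|b\|^2$. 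The first resulting term produces twice the reaction part of $|||p-\tilde{p}_h|||_\Omega^2$. Because $p_h|_K$ is the integral mean of $\tilde{p}_h$ over $K$, the Poincar\'e inequality (\ref{Poicare inequality}) together with $\nabla\tilde{p}_h|_K = -S_K^{-1}{\bf u}_h$ bounds the second term:
$$c_{{\bf w},r,K}\|\tilde{p}_h - p_h\|_K^2 \leq C_{P,d}\,c_{{\bf w},r,K} h_K^2 \|S^{-1}{\bf u}_h\|_K^2 = C_{P,d}\,\eta_{D,K}^2.$$

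Assembling the elementwise bounds yields
$$\mathcal{E}^2 \leq 2\,|||p-\tilde{p}_h|||_\Omega^2 + 2 C_{P,d} \sum_{K\in\mathcal{T}_{h}}\eta_{D,K}^2,$$
and since $C_{P,d}\leq 1$ for simplices in dimension $d\leq 3$, the elementary inequality $\sqrt{a^2+b^2}\leq a+b$ for $a,b\geq 0$ gives $\mathcal{E} \leq \sqrt{2}\bigl(|||p-\tilde{p}_h|||_\Omega + (\sum_{K}\eta_{D,K}^2)^{1/2}\bigr)$. Inserting (\ref{abstract error estimates-centered}) or (\ref{abstract error estimates-upwind}) to control $|||p-\tilde{p}_h|||_\Omega$ then produces (\ref{abstract error estimate-centered 1}) or (\ref{abstract error estimate-upwind 1}), respectively. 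The reduction is essentially bookkeeping; the only item demanding care is tracking the Poincar\'e constant so that the prefactor remains $\sqrt{2}$, and the genuine analytical content lies entirely in the previously established Lemma \ref{abstract error estimate}.
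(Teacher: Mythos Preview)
Your proposal is correct and follows essentially the same route as the paper's proof: identify the stress error with the gradient part of $|||p-\tilde p_h|||_\Omega$, split $p-p_h$ through $\tilde p_h$, control $\|\tilde p_h-p_h\|_K$ by $h_K\|S^{-1}{\bf u}_h\|_K$ via the mean-value property (\ref{postprocessed variable 2}) and a Poincar\'e-type inequality, and then invoke Lemma~\ref{abstract error estimate}. The only cosmetic difference is that the paper applies the triangle inequality to $\|p-p_h\|_K$ before squaring (and somewhat loosely cites the Friedrichs inequality (\ref{Friedrichs inequality}) rather than Poincar\'e (\ref{Poicare inequality})), whereas you square first and then use $\|a+b\|^2\le 2\|a\|^2+2\|b\|^2$; your explicit tracking of $C_{P,d}\le 1$ is in fact cleaner.
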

\begin{proof}\
By the postprocessed formulations (\ref{postprocessed variable 1})-(\ref{postprocessed variable 2}) and
the generalized Friedrichs
inequality (\ref{Friedrichs inequality}), we have
\begin{equation}\label{idea 1}
\begin{array}{lll}
||p-p_{h}||_{K}&\leq&||p-\tilde{p}_{h}||_{K}+||\tilde{p}_{h}-p_{h}||_{K}
\leq||p-\tilde{p}_{h}||_{K}+h_{K}||\nabla\tilde{p}_{h}||_{K}\vspace{2mm}\\
&=&||p-\tilde{p}_{h}||_{K}+h_{K}||S^{-1}{\bf u}_{h}||_{K}\ \ \ {\rm
for\ all}\ \  K\in\mathcal{T}_{h}.
\end{array}
\end{equation}
On the other hand, it holds
\begin{equation}\label{idea 2}
||S^{-1/2}({\bf u}-{\bf
u}_{h})||_{K}^{2}=||S^{1/2}\nabla(p-\tilde{p}_{h})||_{K}^{2}\ \ \
{\rm  for\ all}\ \ \ K\in\mathcal{T}_{h}.
\end{equation}
Summing (\ref{idea 2}) and  (\ref{idea 1}) with  a multiplier $c_{{\bf
w},r,K}^{1/2}$   over all $K\in\mathcal{T}_{h}$ yields
\begin{equation}\label{idea 3}
\mathcal{E}\leq \sqrt{2}(|||p-\tilde{p}_{h}|||_{\Omega}+
\{\sum\limits_{K\in\mathcal{T}_{h}}c_{{\bf
w},r,K}h_{K}^{2}||S^{-1}{\bf u}_{h}||_{K}^{2}\}^{1/2}).
\end{equation}
 The desired results (\ref{abstract error estimate-centered 1})-(\ref{abstract error estimate-upwind 1}) then follows from   LEMMA \ref{abstract error estimate}.
\end{proof}
\begin{lemma}\label{Bramble-Hilbert lemma}
%{\rm (Bramble-Hilbert lemma)}
\ For any $K\in\mathcal{T}_{h}$ and  $\varphi\in H^{1}(K)$, it holds
\begin{equation}\label{Bramble-hilbert lemma 1}
||\varphi-\varphi_{K}||_{K}\leq c_{6}\alpha_{K}|||\varphi|||_{K},
\end{equation}
where $\varphi_{K}$ denotes the mean of $\varphi$ over $K$, and
$\alpha_{K}$ is defined as in (\ref{new 13}).
\end{lemma}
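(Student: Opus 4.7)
The plan is to prove the two inequalities implicit in the minimum defining $\alpha_K$ separately and then take the smaller one. Recall $\alpha_K=\min\{h_K/\sqrt{c_{S,K}},\,1/\sqrt{c_{\mathbf{w},r,K}}\}$, with the convention that when $c_{\mathbf{w},r,K}=0$ only the first argument is active, and that $|||\varphi|||_K^2=(S\nabla\varphi,\nabla\varphi)_K+c_{\mathbf{w},r,K}\|\varphi\|_K^2$.

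First I would establish the diffusion-dominated bound using the Poincar\'{e} inequality (\ref{Poicare inequality}) together with the ellipticity bound $c_{S,K}|\mathbf{v}|^2\le S_K\mathbf{v}\cdot\mathbf{v}$ from Assumption (D1):
\begin{equation*}
\|\varphi-\varphi_K\|_K^2\le C_{P,d}h_K^2\|\nabla\varphi\|_K^2\le \frac{C_{P,d}h_K^2}{c_{S,K}}(S\nabla\varphi,\nabla\varphi)_K\le \frac{C_{P,d}h_K^2}{c_{S,K}}|||\varphi|||_K^2.
\end{equation*}
This gives $\|\varphi-\varphi_K\|_K\le \sqrt{C_{P,d}}\,(h_K/\sqrt{c_{S,K}})|||\varphi|||_K$.

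Next I would establish the reaction-dominated bound. The key observation is that $\varphi_K$ is by definition the $L^2(K)$-orthogonal projection of $\varphi$ onto constants, so $\|\varphi-\varphi_K\|_K\le \|\varphi\|_K$. When $c_{\mathbf{w},r,K}>0$, dividing and multiplying by this constant yields
\begin{equation*}
\|\varphi-\varphi_K\|_K^2\le \|\varphi\|_K^2=\frac{1}{c_{\mathbf{w},r,K}}\bigl(c_{\mathbf{w},r,K}\|\varphi\|_K^2\bigr)\le \frac{1}{c_{\mathbf{w},r,K}}|||\varphi|||_K^2.
\end{equation*}

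Finally I would combine. Taking the smaller of the two estimates gives $\|\varphi-\varphi_K\|_K\le c_6\alpha_K|||\varphi|||_K$ with $c_6=\max\{\sqrt{C_{P,d}},1\}$, which depends only on $d$ (recall $C_{P,d}=d/\pi$). The case $c_{\mathbf{w},r,K}=0$ is handled separately: by Assumption (D6) the second bound is vacuous, $\alpha_K$ reduces to $h_K/\sqrt{c_{S,K}}$, and the first bound directly delivers the result. The main issue here is essentially bookkeeping—ensuring the two regimes (diffusion- versus reaction-dominated) both lead to the same shape of estimate and that the degenerate case $c_{\mathbf{w},r,K}=0$ is treated consistently; there is no real analytic obstacle.
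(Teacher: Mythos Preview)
Your proof is correct and follows essentially the same approach as the paper: a case split between the diffusion-dominated bound (Poincar\'{e}/Bramble--Hilbert plus the lower eigenvalue bound on $S$) and the reaction-dominated bound (the $L^2$-projection property of the mean), combined via the minimum defining $\alpha_K$. The only cosmetic difference is that you invoke the explicit Poincar\'{e} constant $C_{P,d}$ from (\ref{Poicare inequality}) and prove both bounds first before taking the minimum, whereas the paper phrases it as a dichotomy on which term in $\alpha_K$ is active and cites the Bramble--Hilbert lemma with a generic constant $c_7$; the final constant $c_6=\max\{\sqrt{C_{P,d}},1\}$ matches the paper's $c_6=\max\{c_7,1\}$.
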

\begin{proof} From  (\ref{new
13}), it holds $\alpha_{K}=h_{K}c_{S,K}^{-1/2}$ when $h_{K}c_{S,K}^{-1/2}\leq c_{{\bf w},r,K}^{-1/2}$. By Bramble-Hilbert lemma
we have
\begin{equation}
\begin{array}{lll}\label{Bramble-Holbert lemma 2}
||\varphi-\varphi_{K}||_{K}&\leq&
c_{7}h_{K}||\nabla\varphi||_{K}\leq
c_{7}h_{K}c_{S,K}^{-1/2}||S^{1/2}\nabla\varphi||_{K}\vspace{2mm}\\
&=&c_{7}\alpha_{K}||S^{1/2}\nabla\varphi||_{K}\leq
c_{7}\alpha_{K}|||\varphi|||_{K}.
\end{array}
\end{equation}
On the other hand, when  $h_{K}c_{S,K}^{-1/2}>c_{{\bf w},r,K}^{-1/2}$, it holds $\alpha_{K}=c_{{\bf w},r,K}^{-1/2}$. By the property of
$L^{2}-$projection we get
\begin{equation}\label{Bramble-Hilbert lemma 3}
\begin{array}{lll}
||\varphi-\varphi_{K}||_{K}&\leq&||\varphi||_{K}=c_{{\bf
w},r,K}^{-1/2}c_{{\bf w},r,K}^{1/2}||\varphi||_{K}\vspace{2mm}\\
&=&\alpha_{K}c_{{\bf w},r,K}^{1/2}||\varphi||_{K}\leq
\alpha_{K}|||\varphi|||_{K}.
\end{array}
\end{equation}
 The assertion
(\ref{Bramble-hilbert lemma 1}) follows from (\ref{Bramble-Holbert
lemma 2})-(\ref{Bramble-Hilbert lemma 3}) with $c_{6} :=\max\{c_{7},
1\}$.
\end{proof}

\section{A posteriori error analysis}\ We devote this section
to   computable estimates of $T_{R}(\varphi),T_{U}(\varphi)$ and
$T_{C}(\varphi,s)$ defined in (\ref{residual component 1}),
(\ref{upwind component 1}) and (\ref{covection component 1}),
respectively,  with the help of ${\bf u}_{h}$ and $p_{h}$. Moreover,
we derive an estimate of $|||\tilde{p}_{h}-s|||$ by substituting $s$
with the modified Oswald interpolation $I_{MO}(\tilde{p}_{h})$ (see
\cite{Vohralik1}), and by using the postprocessing technique as a
transition.  Finally, we give the proof of THEOREMs \ref{global error
estimate}-\ref{Global upwind}.

\begin{lemma}\label{residual estimator (global)}
{\rm (Residual estimator)}\ Let $T_{R}(\varphi)$ be  defined  as in
(\ref{residual component 1}) with $|||\varphi|||_{\Omega}=1$, and
$\eta_{R,K}$ be  defined as in (\ref{new 1}). Then it holds
\begin{equation}\label{residual estimator 1}
T_{R}(\varphi)\leq
c_{8}\{\sum\limits_{K\in\mathcal{T}_{h}}\eta_{R,K}^{2}\}^{1/2}.
\end{equation}
\end{lemma}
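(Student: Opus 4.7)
The plan is to reduce $T_R(\varphi)$ to an integral against the computable interior residual of the centered mixed scheme plus a controllable remainder coming from the postprocessing. First I would use the defining relation $-S\nabla\tilde p_h = \mathbf{u}_h$ on each $K$, which gives $\nabla\cdot(S\nabla\tilde p_h) = -\nabla\cdot\mathbf{u}_h$ and $\nabla\tilde p_h\cdot\mathbf{w} = -(S^{-1}\mathbf{u}_h)\cdot\mathbf{w}$, so that the expression inside the parentheses in (\ref{residual component 1}) can be rewritten as
\begin{equation*}
\bigl[f - \nabla\cdot\mathbf{u}_h + (S^{-1}\mathbf{u}_h)\cdot\mathbf{w} - (r+\nabla\cdot\mathbf{w})p_h\bigr] + (r+\nabla\cdot\mathbf{w})(p_h - \tilde p_h).
\end{equation*}
The first bracket is precisely the polynomial residual whose $L^2$-norm is weighted by $\alpha_K$ in $\eta_{R,K}$.

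Next I would handle the two contributions separately. For the bracketed term $R_{1,K}$, I apply Cauchy--Schwarz on $K$, then Lemma \ref{Bramble-Hilbert lemma} to get $\|\varphi-\varphi_K\|_K \le c_6\alpha_K |||\varphi|||_K$, followed by a discrete Cauchy--Schwarz across the elements. Using $|||\varphi|||_\Omega=1$ this yields a bound of the form $c_6 \{\sum_K \alpha_K^2 \|R_{1,K}\|_K^2\}^{1/2}$, which is the $\alpha_K$-weighted part of $\eta_{R,K}^2$.

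For the remainder $(r+\nabla\cdot\mathbf{w})(p_h-\tilde p_h)$, the key is that the postprocessing condition (\ref{postprocessed variable 2}) says $p_K = p_h|_K$ is the mean of $\tilde p_h$ over $K$; hence $p_h - \tilde p_h$ has zero mean on $K$, and the Poincar\'e inequality (\ref{Poicare inequality}) together with $\nabla\tilde p_h = -S^{-1}\mathbf{u}_h$ gives $\|p_h - \tilde p_h\|_K \le \sqrt{C_{P,d}}\,h_K\|S^{-1}\mathbf{u}_h\|_K$. Combining this with Cauchy--Schwarz on $K$, the Bramble--Hilbert bound for $\|\varphi-\varphi_K\|_K$, the uniform bound $|r+\nabla\cdot\mathbf{w}| \le C_{\mathbf{w},r,K}$ from Assumption (D4), and the definition $\beta_K = C_{\mathbf{w},r,K} h_K \alpha_K$, produces a bound of the form $c\{\sum_K \beta_K^2 \|S^{-1}\mathbf{u}_h\|_K^2\}^{1/2}$, which is the $\beta_K$-weighted part of $\eta_{R,K}^2$.

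Adding the two contributions yields $T_R(\varphi)\le c_8\{\sum_K\eta_{R,K}^2\}^{1/2}$ with $c_8$ depending only on $c_6$ and $C_{P,d}$. The only subtlety I anticipate is verifying that Lemma \ref{Bramble-Hilbert lemma} uses the ``right'' branch of $\alpha_K$ in each situation, but since the bound $\|\varphi-\varphi_K\|_K\le c_6\alpha_K|||\varphi|||_K$ is valid unconditionally (the lemma already takes the minimum in defining $\alpha_K$), this causes no trouble. The degenerate case $c_{\mathbf{w},r,K}=0$ is handled by Assumption (D6), which forces $C_{\mathbf{w},r,K}=0$ and hence $\beta_K = 0$, so that the remainder contribution simply vanishes.
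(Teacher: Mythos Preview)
Your proposal is correct and follows essentially the same approach as the paper: rewrite the integrand via the postprocessing relation $-S\nabla\tilde p_h=\mathbf{u}_h$, split off the computable residual from the remainder $(r+\nabla\cdot\mathbf{w})(p_h-\tilde p_h)$, and bound each piece using Lemma~\ref{Bramble-Hilbert lemma} for $\|\varphi-\varphi_K\|_K$ together with the Poincar\'e-type estimate $\|p_h-\tilde p_h\|_K\lesssim h_K\|S^{-1}\mathbf{u}_h\|_K$ coming from (\ref{postprocessed variable 2}). The only cosmetic difference is that the paper cites the Friedrichs inequality (\ref{Friedrichs inequality}) for this last step while you cite the Poincar\'e inequality (\ref{Poicare inequality}); either works here since $p_h|_K$ is the mean of $\tilde p_h$ over $K$.
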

\begin{proof}
A
combination of Assumption (D4), LEMMA \ref{Bramble-Hilbert
lemma},   Friedrichs inequality (\ref{Friedrichs inequality}),
and the postprocessing (\ref{postprocessed variable 1})-(\ref{postprocessed variable 2}), yields
\begin{equation}\label{residual estimator 7}
\begin{array}{lll}
T_{R}(\varphi)&=&\displaystyle\sum\limits_{K\in\mathcal{T}_{h}}(f+
\nabla\cdot(S\nabla\tilde{p}_{h})-\nabla\cdot(\tilde{p}_{h}{\bf
w})-r\tilde{p}_{h},\varphi-\varphi_{K})_{K}\vspace{2mm}\\
&=&\displaystyle\sum\limits_{K\in\mathcal{T}_{h}}(f-\nabla\cdot{\bf
u}_{h}+(S^{-1}{\bf u}_{h})\cdot{\bf w}-(r+\nabla\cdot{\bf
w})p_{h},\varphi-\varphi_{K})_{K}\vspace{2mm}\\
&\
&\displaystyle+\sum\limits_{K\in\mathcal{T}_{h}}((r+\nabla\cdot{\bf
w})(p_{h}-\tilde{p}_{h}),\varphi-\varphi_{K})_{K}\vspace{2mm}\\
&\leq&\displaystyle c_{8}\{\sum\limits_{K\in\mathcal{T}_{h}}
\alpha_{K}||f-\nabla\cdot{\bf u}_{h}+(S^{-1}{\bf u}_{h})\cdot{\bf
w}-(r+\nabla\cdot{\bf
w})p_{h}||_{K}|||\varphi|||_{K}\vspace{2mm}\\
&\ &\displaystyle+\sum\limits_{K\in\mathcal{T}_{h}}C_{{\bf
w},r,K}h_{K}||\nabla\tilde{p}_{h}||_{K}\alpha_{K}|||\varphi|||_{K}\}\vspace{2mm}\\
&\leq&\displaystyle c_{8}\{\sum\limits_{K\in\mathcal{T}_{h}}
\alpha_{K}||f-\nabla\cdot{\bf u}_{h}+(S^{-1}{\bf u}_{h})\cdot{\bf
w}-(r+\nabla\cdot{\bf
w})p_{h}||_{K}|||\varphi|||_{K}\vspace{2mm}\\
&\ &\displaystyle+\sum\limits_{K\in\mathcal{T}_{h}}
\beta_{K}||S^{-1}{\bf u}_{h}||_{K}|||\varphi|||_{K}\}.
\end{array}
\end{equation}
Then the desired result (\ref{residual estimator 1}) follows with $|||\varphi|||_{\Omega}=1$.
\end{proof}

\begin{lemma}\label{upwind estimator (global)}
{\rm(Upwind estimator)}\ Let $T_{U}(\varphi)$ be defined as in
(\ref{upwind component 1}) with $|||\varphi|||_{\Omega}=1$, and
$\eta_{U,K}$ be   defined as in (\ref{new 2}). Then it holds
\begin{equation}\label{upwind estimator 1}
T_{U}(\varphi)\leq
c_{9}\{\sum\limits_{K\in\mathcal{T}_{h}}\eta_{U,K}^{2}\}^{1/2}.
\end{equation}
\end{lemma}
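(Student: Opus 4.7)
The plan is to decompose $\hat p_\sigma - \tilde p_h$ into an ``upwind bias'' piece plus a ``mean deviation'' piece, and to bound each by Cauchy--Schwarz against Friedrichs-controlled $\varphi$-terms. Concretely, a case check on the sign of $w_{K,\sigma}$ against (\ref{upwind-weighted scheme 3})--(\ref{upwind-weighted scheme 4}) and (\ref{hat-hat-p1})--(\ref{hat-hat-p2}) gives the identity
\[
\hat p_\sigma = \hat{\hat p}_\sigma + \bar{p}_\sigma,
\]
where $\bar{p}_\sigma := \frac{1}{2}(p_K+p_L)$ on an interior side $\sigma=\bar K\cap\bar L$ and $\bar{p}_\sigma := p_K$ on a boundary side. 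Substituting into (\ref{upwind component 1}) splits $T_U(\varphi) = T_U^1(\varphi) + T_U^2(\varphi)$ into a $\hat{\hat p}_\sigma$-part and a $(\bar p_\sigma - \tilde p_h|_K)$-part.

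For $T_U^1$, note that $({\bf w}\cdot{\bf n})|_\sigma$, $\hat{\hat p}_\sigma$ and $\varphi_K$ are all constant on $\sigma$, so $\langle\cdot,\cdot\rangle_\sigma$ collapses to a scalar product with factor $|\sigma|$. I would then reorganize $\sum_K\sum_{\sigma\in\varepsilon_K}$ into a sum over sides: on an interior side the contributions from $K$ and from $L$ combine---via ${\bf n}_L=-{\bf n}_K$ together with the equality of $\hat{\hat p}_\sigma$ as seen from either side (a brief sign analysis in (\ref{hat-hat-p1}))---into $|\sigma|\,({\bf w}\cdot{\bf n})|_\sigma\,\hat{\hat p}_\sigma\,(\varphi_K - \varphi_L)$, while on a boundary side only $\varphi_K$ survives, and since $\varphi=0$ on $\partial\Omega$ one has $\varphi_K = \varphi_K - \varphi_\sigma$. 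The Friedrichs inequality (\ref{Friedrichs inequality}) then bounds both $|\varphi_K-\varphi_L|$ (by the triangle inequality through $\varphi_\sigma$) and the boundary deviation by $h_K|K|^{-1/2}\|\nabla\varphi\|_{\omega_\sigma}$. A sidewise Cauchy--Schwarz with weight $\sqrt{|\sigma|\,h_K/c_{S,K}}$, combined with $c_{S,K}\|\nabla\varphi\|_K^2 \le |||\varphi|||_K^2$ and shape regularity $|\sigma|h_K/|K|\le c$, then controls $|T_U^1|$ by $|||\varphi|||_\Omega$ times the square root of the $\hat{\hat p}_\sigma$-block of $\sum_K\eta_{U,K}^2$.

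For $T_U^2$, I would use that $p_K$ is the $K$-mean of $\tilde p_h$ by (\ref{postprocessed variable 2}) and that $\nabla\tilde p_h|_K = -S^{-1}{\bf u}_h|_K$ by (\ref{postprocessed variable 1}). Writing $\int_\sigma(\bar p_\sigma - \tilde p_h|_K) = |\sigma|(\bar p_\sigma - \tilde p_{h,\sigma}^K)$, where $\tilde p_{h,\sigma}^K$ denotes the $\sigma$-mean of $\tilde p_h|_K$, and exploiting the $W_0(\mathcal T_h)$-constraint $\tilde p_{h,\sigma}^K = \tilde p_{h,\sigma}^L$ on interior sides, one obtains $\bar p_\sigma - \tilde p_{h,\sigma}^K = \frac{1}{2}\bigl((p_K-\tilde p_{h,\sigma}^K) + (p_L-\tilde p_{h,\sigma}^L)\bigr)$ inside and $\bar p_\sigma - \tilde p_{h,\sigma}^K = p_K-\tilde p_{h,\sigma}^K$ on the boundary. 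Each factor is controlled by (\ref{Friedrichs inequality}) via $|p_K-\tilde p_{h,\sigma}^K|^2 \le (3d\,h_K^2/|K|)\|S^{-1}{\bf u}_h\|_K^2$. Repeating the side-regrouping trick to reduce to $(\varphi_K - \varphi_L)$-jumps (boundary: $\varphi_K=\varphi_K-\varphi_\sigma$, again Friedrichs-controlled) and applying a second sidewise Cauchy--Schwarz with weights tuned to extract $h_K h_\sigma/c_{S,K}$ matches exactly the remaining $h_\sigma\,(({\bf w}\cdot{\bf n})|_\sigma)^2\|S^{-1}{\bf u}_h\|_{\omega_\sigma}^2$-block of $\eta_{U,K}^2$ in (\ref{new 2}). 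Adding the bounds for $T_U^1$ and $T_U^2$ gives (\ref{upwind estimator 1}).

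The hard part will be the combinatorial bookkeeping in regrouping the double sum by sides---in particular the sign analysis that shows $\hat{\hat p}_\sigma$ agrees when viewed from $K$ or from $L$, so that the two neighbor contributions fuse into a clean jump $\varphi_K-\varphi_L$---and selecting the Cauchy--Schwarz weights so that only the local $c_{S,K}$ appears in the denominator, without any cross-coupling or global minimum of $c_S$. This coefficient-robustness is what ultimately allows the constant $c_9$ to depend solely on the shape regularity and the spatial dimension, as asserted in Remark~\ref{robust Remark}.
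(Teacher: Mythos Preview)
Your proposal is correct and follows essentially the same route as the paper: the same decomposition $\hat p_\sigma=\hat{\hat p}_\sigma+\bar p_\sigma$ (the paper writes $\bar p_\sigma-\tilde p_\sigma$ as $\hat p_{\omega_\sigma}$), the same use of (\ref{postprocessed variable 1})--(\ref{postprocessed variable 2}) and the $W_0(\mathcal T_h)$-constraint, and the same side-cancellation idea to turn $\varphi_K$ into a gradient-controlled quantity. The one technical difference is that the paper subtracts the \emph{function} $\varphi$ rather than regrouping to the discrete jump $\varphi_K-\varphi_L$: since $\hat{\hat p}_\sigma$ and $\hat p_{\omega_\sigma}$ are side-constants and $\varphi\in H_0^1(\Omega)$, one has $\sum_K\sum_{\sigma\in\varepsilon_K}\int_\sigma c_\sigma\,{\bf w}\cdot{\bf n}\,\varphi=0$, so $\varphi_K$ may be replaced by $\varphi_K-\varphi$ inside the $(K,\sigma)$-sum, after which a trace bound $\|\varphi_K-\varphi\|_\sigma\le c\,h_K^{1/2}c_{S,K}^{-1/2}\|S^{1/2}\nabla\varphi\|_K$ involves only $K$-data. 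This sidesteps precisely the cross-element weight issue you identify as ``the hard part'': with the paper's device the Cauchy--Schwarz pairs $\frac{h_K}{c_{S,K}}$ against $|||\varphi|||_K^2$ directly, with no $c_{S,L}$ appearing; your $\varphi_K-\varphi_L$ version works too but requires the weight $\bigl(\tfrac{h_K}{c_{S,K}}+\tfrac{h_L}{c_{S,L}}\bigr)^{1/2}$ (which is already present in $\sum_K\eta_{U,K}^2$ since each interior side is counted from both neighbors) to keep the constant coefficient-free.
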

\begin{proof} We denote by $\tilde{p}_{\sigma}$ the mean of
$\tilde{p}_{h}$ over $\sigma\in\varepsilon_{h}$, i.e.,
$\tilde{p}_{\sigma} :=<1,\tilde{p}_{h}>_{\sigma}/|\sigma|$. The
definitions of $T_{U}(\varphi)$ and $w_{K,\sigma}$, together with
Assumption $(D2)$ of the velocity field ${\bf w}$, imply
\begin{equation}\label{upwind estimator 2}
T_{U}(\varphi)=\sum\limits_{K\in\mathcal{T}_{h}}\sum\limits_{\sigma\in\varepsilon_{K}}
(\hat{p}_{\sigma}-\tilde{p}_{\sigma})w_{K,\sigma}\varphi_{K}.
\end{equation}

For an element $K\in\mathcal{T}_{h}$, it holds $\sigma\in\varepsilon_{K}\cap\varepsilon_{L}$ or
$\sigma\in\varepsilon_{K}^{{\rm ext}}$. For the former
case, recalling $p_{K}=p_{h}|_{K},p_{L}=p_{h}|_{L}$,  from the
postprocessing (\ref{postprocessed variable 2}) we obtain
\begin{equation}\label{upwind estimator 3}
\begin{array}{lll}
\hat{p}_{\sigma}-\tilde{p}_{\sigma}&=&\displaystyle\hat{p}_{\sigma}-
\frac{1}{2}(p_{K}+p_{L})+\frac{1}{2}(p_{K}-\tilde{p}_{\sigma})+
\frac{1}{2}(p_{L}-\tilde{p}_{\sigma})\vspace{2mm}\\
&=&\displaystyle\hat{p}_{\sigma}-
\frac{1}{2}(p_{K}+p_{L})+\frac{1}{2}(\frac{1}{|K|}\int_{K}\tilde{p}_{h}dx
-\frac{1}{|\sigma|}\int_{\sigma}\tilde{p}_{h}ds)\vspace{2mm}\\
&\ &\displaystyle+\frac{1}{2}(\frac{1}{|L|}\int_{L}\tilde{p}_{h}dx
-\frac{1}{|\sigma|}\int_{\sigma}\tilde{p}_{h}ds).
\end{array}
\end{equation}
For the latter case,  we similarly have
\begin{equation}\label{upwind estimator 4}
\hat{p}_{\sigma}-\tilde{p}_{\sigma}=\hat{p}_{\sigma}-p_{K}+
(\frac{1}{|K|}\int_{K}\tilde{p}_{h}dx
-\frac{1}{|\sigma|}\int_{\sigma}\tilde{p}_{h}ds).
\end{equation}
For convenience, in what follows we denote
\begin{equation*}
 \hat{p}_{\omega_{\sigma}} :=\frac{1}{2}(\frac{1}{|K|}\int_{K}\tilde{p}_{h}dx
-\frac{1}{|\sigma|}\int_{\sigma}\tilde{p}_{h}ds)+
\frac{1}{2}(\frac{1}{|L|}\int_{L}\tilde{p}_{h}dx
-\frac{1}{|\sigma|}\int_{\sigma}\tilde{p}_{h}ds)
\end{equation*}
when $\sigma\in\varepsilon_{K}\cap\varepsilon_{L}$, and
\begin{equation*}
 \hat{p}_{\omega_{\sigma}} :=
 \frac{1}{|K|}\int_{K}\tilde{p}_{h}dx
-\frac{1}{|\sigma|}\int_{\sigma}\tilde{p}_{h}ds
\end{equation*}
when $\sigma\in\varepsilon_{K}^{{\rm ext}}$.

In light of the definitions of $\hat{p}_{\sigma}$ and
$\hat{\hat{p}}_{\sigma}$  in (\ref{upwind-weighted scheme
3})-(\ref{upwind-weighted scheme 4})  and
(\ref{hat-hat-p1})-(\ref{hat-hat-p2}), and from (\ref{upwind
estimator 2})-(\ref{upwind estimator 4}) we have
\begin{equation}\label{upwind estimator 5}
T_{U}(\varphi)=\sum\limits_{K\in\mathcal{T}_{h}}
\sum\limits_{\sigma\in\varepsilon_{K}}(\hat{\hat{p}}_{\sigma}+
\hat{p}_{\omega_{\sigma}})w_{K,\sigma}\varphi_{K}.
\end{equation}
%???We split the right-hand side of (\ref{upwind estimator 5}) into two
%parts
%\begin{equation*}
%\sum\limits_{K\in\mathcal{T}_{h}}
%\sum\limits_{\sigma\in\varepsilon_{K}}\hat{\hat{p}}_{\sigma}w_{K,\sigma}\varphi_{K}
%\end{equation*}
%and
%\begin{equation*}
%\sum\limits_{K\in\mathcal{T}_{h}}
%\sum\limits_{\sigma\in\varepsilon_{K}}
%\hat{p}_{\omega_{\sigma}}w_{K,\sigma}\varphi_{K}
%\end{equation*}
%in order to separately estimate.
Since $\varphi\in
H_{0}^{1}(\Omega)$, and $\hat{\hat{p}}_{\sigma}$,
$\hat{p}_{\omega_{\sigma}}$ are   constants over
a side $\sigma\in\varepsilon_{h}$,  it holds
\begin{equation}\label{upwind estimator 6}
\sum\limits_{K\in\mathcal{T}_{h}}
\sum\limits_{\sigma\in\varepsilon_{K}}\hat{\hat{p}}_{\sigma}w_{K,\sigma}\varphi_{K}=
\sum\limits_{K\in\mathcal{T}_{h}}
\sum\limits_{\sigma\in\varepsilon_{K}}\int_{\sigma}\hat{\hat{p}}_{\sigma}{\bf
w}\cdot{\bf n}(\varphi_{K}-\varphi),
\end{equation}
\begin{equation}\label{upwind estimator 7}
\sum\limits_{K\in\mathcal{T}_{h}}
\sum\limits_{\sigma\in\varepsilon_{K}}\hat{p}_{\omega_{\sigma}}w_{K,\sigma}\varphi_{K}=
\sum\limits_{K\in\mathcal{T}_{h}}
\sum\limits_{\sigma\in\varepsilon_{K}}\int_{\sigma}\hat{p}_{\omega_{\sigma}}{\bf
w}\cdot{\bf n}(\varphi_{K}-\varphi).
\end{equation}
From Friedrichs inequality (\ref{Friedrichs inequality}) and the
postprocessing (\ref{postprocessed variable 1}) we have
\begin{equation}\label{upwind estimator 8}
|\hat{p}_{\omega_{\sigma}}|\leq c_{10}h_{\sigma}^{1-d/2}||S^{-1}{\bf
u}_{h}||_{\omega_{\sigma}}.
\end{equation}
The trace inequality (see LEMMA 3.1 in \cite{Verfurth-4}) and local
shape regularity of elements indicate
\begin{equation}\label{proof1}
\begin{array}{lll}
||\varphi_{K}-\varphi||_{\sigma}&\leq&c_{10}
(h_{\sigma}^{-1/2}||\varphi-\varphi_{K}||_{K}+
||\varphi-\varphi_{K}||_{K}^{1/2}||\nabla(\varphi-\varphi_{K})||_{K}^{1/2})\vspace{2mm}\\
&\leq&c_{11}h_{K}^{1/2}||\nabla\varphi||_{K}\leq c_{11}
h_{K}^{1/2}c_{S,K}^{-1/2}||S^{1/2}\nabla\varphi||_{K}.
\end{array}
\end{equation}
A combination of (\ref{upwind estimator 8})- (\ref{proof1}) then yields
\begin{equation}\label{upwind estimator 10}
\begin{array}{lll}
&\ &\displaystyle\sum\limits_{K\in\mathcal{T}_{h}}
\sum\limits_{\sigma\in\varepsilon_{K}}\int_{\sigma}\hat{p}_{\omega_{\sigma}}{\bf
w}\cdot{\bf n}(\varphi_{K}-\varphi)\vspace{2mm}\\
 &\ &
\leq\ \  c_{11}\displaystyle\sum\limits_{K\in\mathcal{T}_{h}}
\{\sum\limits_{\sigma\in\varepsilon_{K}}|({\bf w}\cdot{\bf
n})|_{\sigma}|h_{\sigma}^{1/2}||S^{-1}{\bf
u}_{h}||_{\omega_{\sigma}}\}h_{K}^{1/2}c_{S,K}^{-1/2}|||\varphi|||_{K}.
\end{array}
\end{equation}
Similarly we can obtain
\begin{equation}\label{upwind estimator 9}
\sum\limits_{K\in\mathcal{T}_{h}}
\sum\limits_{\sigma\in\varepsilon_{K}}\int_{\sigma}\hat{\hat{p}}_{\sigma}{\bf
w}\cdot{\bf n}(\varphi_{K}-\varphi)\leq
c_{12}\sum\limits_{K\in\mathcal{T}_{h}}
\{\sum\limits_{\sigma\in\varepsilon_{K}}|({\bf w}\cdot{\bf
n})|_{\sigma}|||\hat{\hat{p}}_{\sigma}||_{\sigma}\}h_{K}^{1/2}c_{S,K}^{-1/2}|||\varphi|||_{K}.
\end{equation}

Finally, the desired result (\ref{upwind estimator 1}) follows from
(\ref{upwind estimator 5})-(\ref{upwind estimator 7}) and
(\ref{upwind estimator 10})-(\ref{upwind estimator 9}) with
$c_{9}:=\max(c_{11},c_{12})$ and $|||\varphi|||_{\Omega}=1$.
\end{proof}

For the first term, $|||\tilde{p}_{h}-s|||_{\Omega}$, in the right
side of the abstract error estimate (\ref{abstract error
estimate-centered 1}) or (\ref{abstract error estimate-upwind 1}),
we follow \cite{Vohralik1} to take $s:=I_{MO}(\tilde{p}_{h})$ in the
sequel, where  $I_{MO}(\tilde{p}_{h})$ is   the modified Oswald
interpolation of $\tilde{p}_{h}$. Recall an estimate on the modified
Oswald interpolation \cite{Karakashian},
\begin{equation}\label{Oswald interpolation 1}
||\nabla(\varphi_{h}-\mathcal{I}_{{\rm
MO}}(\varphi_{h}))||_{K}^{2}\leq
c_{13}\sum\limits_{\sigma:\sigma\cap
K\neq\Phi}h_{\sigma}^{-1}||[\varphi_{h}]||_{\sigma}^{2},\ \
\varphi_{h}\in\mathbb{P}_{d}(\mathcal{T}_{h})\cap
W_{0}(\mathcal{T}_{h}),
\end{equation}
where $\mathcal{I}_{{\rm
MO}}(\varphi_{h})\in\mathbb{P}_{d}(\mathcal{T}_{h})\cap
H_{0}^{1}(\Omega)$ is the modified Oswald interpolation of
$\varphi_{h}$, $\mathbb{P}_{d}(\mathcal{T}_{h})$ ($d=$2 or 3 )
denotes the set of polynomials of degree at most $d$ on each
simplex,  $\sigma\cap K\neq\emptyset$ when $\sigma$ contains a
vertex of $K$.

By definition we have
$$|||\tilde{p}_{h}-s|||_{\Omega}=\left\{\sum\limits_{K\in\mathcal{T}_{h}}
(S\nabla(\tilde{p}_{h}-s),\nabla(\tilde{p}_{h}-s))_{K}+\sum\limits_{K\in\mathcal{T}_{h}}c_{{\bf w},r,K}||\tilde{p}_{h}-s||_{K}^{2}\right\}^{1/2}.$$ LEMMAs \ref{auxialiary problem 2}-\ref{dual problem 2}       show  respectively computable estimates of the two right-side terms of the above identity with the help
of ${\bf u}_{h}$ and $p_{h}$.
%The definition of the energy norm
%indicates that we have to separately estimate
%\begin{equation*}
% \{\sum\limits_{K\in\mathcal{T}_{h}}
%(S\nabla(\tilde{p}_{h}-s),\nabla(\tilde{p}_{h}-s))_{K}\}^{1/2}
%\ \ {\rm and}\ \ \{\sum\limits_{K\in\mathcal{T}_{h}}c_{{\bf w},r,K}||\tilde{p}_{h}-s||_{K}^{2}\}^{1/2}.
%\end{equation*}
\begin{lemma}\label{auxialiary problem 2}
 Let $\gamma_{{\bf
t}_{\sigma}}(\cdot)$ be defined as in Section 2.1, and
$s:=I_{MO}(\tilde{p}_{h})$. Then it holds
\begin{equation}\label{auxiliary problem 3}
\{\sum\limits_{K\in\mathcal{T}_{h}}
||S^{1/2}\nabla(\tilde{p}_{h}-s)||_{K}^{2}\}^{1/2}\leq
c_{14}\{\sum\limits_{\sigma\in\varepsilon_{h}}\Lambda_{\sigma}h_{\sigma}||[\gamma_{{\bf
t}_{\sigma}}(S^{-1}{\bf u}_{h})]||_{\sigma}^{2}\}^{1/2},
\end{equation}
where $\Lambda_{\sigma}$ is given in Section 4, and ${\bf
t}_{\sigma}$ denotes the unit tangent vector along $\sigma$.
\end{lemma}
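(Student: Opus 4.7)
The plan is to combine the modified Oswald interpolation estimate (\ref{Oswald interpolation 1}) with a side-wise Poincar\'e/Friedrichs inequality that converts jumps of $\tilde p_h$ into jumps of the tangential part of $\nabla \tilde p_h$, and then use the postprocessing identity (\ref{postprocessed variable 1}) to replace $\nabla\tilde p_h$ by $-S^{-1}{\bf u}_h$.

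First, I would verify the hypotheses of (\ref{Oswald interpolation 1}) for $\tilde p_h$. On each $K$, (\ref{postprocessed variable 1}) gives $\nabla\tilde p_h|_K=-S_K^{-1}{\bf u}_h|_K$, where ${\bf u}_h|_K\in RT_0$ is affine and $S_K$ is constant; hence $\tilde p_h|_K$ is a polynomial of degree at most two and therefore lies in $\mathbb{P}_d(\mathcal{T}_h)$ for both $d=2$ and $d=3$. The inclusion $\tilde p_h\in W_0(\mathcal{T}_h)$ was already established in the excerpt. Bounding $S|_K$ from above by $C_{S,K}I$ and invoking (\ref{Oswald interpolation 1}) yields
\begin{equation*}
\|S^{1/2}\nabla(\tilde p_h-s)\|_K^2\;\leq\;c_{13}\,C_{S,K}\!\!\sum_{\sigma:\sigma\cap K\neq\emptyset}\!\!h_\sigma^{-1}\,\|[\tilde p_h]\|_\sigma^2.
\end{equation*}

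The central step is to estimate $\|[\tilde p_h]\|_\sigma$ by $h_\sigma\|[\gamma_{{\bf t}_\sigma}(S^{-1}{\bf u}_h)]\|_\sigma$. The $W_0(\mathcal{T}_h)$ condition forces the jump $[\tilde p_h]$ to have zero mean on every $\sigma\in\varepsilon_h$ (both for interior sides, via $\langle 1,\tilde p_h|_K-\tilde p_h|_L\rangle_\sigma=0$, and for boundary sides, via $\langle 1,\tilde p_h\rangle_\sigma=0$). A Poincar\'e inequality on the $(d-1)$-dimensional simplex $\sigma$ then gives $\|[\tilde p_h]\|_\sigma^2\leq c\,h_\sigma^2\,\|\nabla_\sigma[\tilde p_h]\|_\sigma^2$, where $\nabla_\sigma$ denotes the surface gradient. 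Using (\ref{postprocessed variable 1}), the surface gradient of $\tilde p_h|_K$ on $\sigma$ equals the tangential part of $-S_K^{-1}{\bf u}_h|_K$, and the $L^2(\sigma)$ norm of this tangential part coincides with $\|\gamma_{{\bf t}_\sigma}(S_K^{-1}{\bf u}_h|_K)\|_\sigma$; taking jumps yields $\|\nabla_\sigma[\tilde p_h]\|_\sigma=\|[\gamma_{{\bf t}_\sigma}(S^{-1}{\bf u}_h)]\|_\sigma$.

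To conclude, I would sum the element bound over $K\in\mathcal{T}_h$, interchange the order $\sum_K\sum_{\sigma:\sigma\cap K\neq\emptyset}\to\sum_\sigma\sum_{K:\sigma\cap K\neq\emptyset}$, absorb the (shape-regularity-bounded) multiplicity of elements associated with each $\sigma$ into the constant, and dominate $C_{S,K}$ by $\Lambda_\sigma=\max_{K:\bar K\cap\bar\sigma\neq\emptyset}C_{S,K}$. This produces (\ref{auxiliary problem 3}) with $c_{14}$ depending only on the spatial dimension and the shape-regularity parameter.

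The main obstacle is the second step, in particular the $d=3$ case: the operator $\gamma_{{\bf t}_\sigma}({\bf v})={\bf v}\times{\bf n}$ expresses the tangential component through a rotation rather than a projection, so one must verify carefully that its $L^2(\sigma)$ magnitude equals that of the genuine surface gradient $\nabla_\sigma\tilde p_h$ so that the Poincar\'e-type inequality transfers cleanly. Once this identification and the mean-zero property of $[\tilde p_h]$ are in place, the remaining manipulations are routine summations controlled by the shape regularity of $\mathcal{T}_h$.
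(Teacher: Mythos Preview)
Your proposal is correct and follows essentially the same route as the paper: apply the Oswald estimate (\ref{Oswald interpolation 1}), use the mean-zero property of $[\tilde p_h]$ on each side together with a Poincar\'e inequality on $\sigma$ to gain the factor $h_\sigma$ and pass to the tangential derivative, invoke the postprocessing identity (\ref{postprocessed variable 1}) to obtain $[\gamma_{{\bf t}_\sigma}(S^{-1}{\bf u}_h)]$, and then sum with bounded overlap and the definition of $\Lambda_\sigma$. Your concern about the $d=3$ case is easily settled since $|{\bf v}\times{\bf n}|$ equals the Euclidean norm of the tangential projection of ${\bf v}$, so the $L^2(\sigma)$ norms coincide; the paper simply records this as the identity $\gamma_{{\bf t}_\sigma}(\nabla[\tilde p_h])=-[\gamma_{{\bf t}_\sigma}(S^{-1}{\bf u}_h)]$ without further comment.
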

\begin{proof} From the estimate (\ref{Oswald interpolation 1}) we have
\begin{equation}\label{LEMMA 6.3.1}
||\nabla(\tilde{p}_{h}-s)||_{K}^{2}\leq
c_{13}\sum\limits_{\sigma,\sigma\cap K\neq\emptyset}
h_{\sigma}^{-1}||[\tilde{p}_{h}]||_{\sigma}^{2},\ \ {\rm for\ all}\
K\in\mathcal{T}_{h},
\end{equation}
where $\sigma\cap K\neq\emptyset$ when $\sigma$ contains a vertex of $K$.

Since the mean of $\tilde{p}_{h}$ over interior side is continuous
and its mean on exterior side vanishes, i.e.,
$\displaystyle\int_{\sigma}[\tilde{p}_{h}]ds=0$ for all
$\sigma\in\varepsilon_{h}$, by Poincar\'{e} inequality it holds
\begin{equation}\label{LEMMA 6.3.2}
||[\tilde{p}_{h}]||_{\sigma}=||[\tilde{p}_{h}]-\int_{\sigma}[\tilde{p}_{h}]||_{\sigma}\leq
c_{15}h_{\sigma}||\gamma_{{\bf
t}_{\sigma}}(\nabla([\tilde{p}_{h}]))||_{\sigma}.
\end{equation}
The postprocessing (\ref{postprocessed variable 1}) indicates
\begin{equation}\label{LEMMA 6.3.3}
\gamma_{{\bf t}_{\sigma}}(\nabla([\tilde{p}_{h}]))=-[\gamma_{{\bf
t}_{\sigma}}(S^{-1}{\bf u}_{h})], \ \ {\rm for\ all}\
\sigma\in\varepsilon_{h}.
\end{equation}
A combination of (\ref{LEMMA 6.3.1})-(\ref{LEMMA 6.3.3}) yields
\begin{equation}\label{LEMMA 6.3.4}
||S^{1/2}\nabla(\tilde{p}_{h}-s)||_{K}^{2}\leq
c_{13}c_{15}C_{S,K}\sum\limits_{\sigma,\sigma\cap K\neq\emptyset}
h_{\sigma}||[\gamma_{{\bf t}_{\sigma}}(S^{-1}{\bf
u}_{h})]||_{\sigma}^{2}.
\end{equation}
Summing (\ref{LEMMA 6.3.4}) over each element $K$,   noticing
that the number of summation over a side $\sigma\in\varepsilon_{h}$
is bounded by a positive constant $c_{17}$, and combining the
definition of $\Lambda_{\sigma}$, we obtain
\begin{equation}\label{LEMMA 6.3.5}
\begin{array}{lll}
\displaystyle\sum\limits_{K\in\mathcal{T}_{h}}||S^{1/2}\nabla(\tilde{p}_{h}-s)||_{K}^{2}&\leq&
\displaystyle
c_{13}c_{15}\sum\limits_{K\in\mathcal{T}_{h}}C_{S,K}\sum\limits_{\sigma,\sigma\cap
K\neq\emptyset} h_{\sigma}||[\gamma_{{\bf
t}_{\sigma}}(S^{-1}{\bf u}_{h})]||_{\sigma}^{2}\vspace{2mm}\\
&\leq&\displaystyle
c_{13}c_{15}c_{16}\sum\limits_{\sigma\in\varepsilon_{h}}\Lambda_{\sigma}h_{\sigma}
||[\gamma_{{\bf t}_{\sigma}}(S^{-1}{\bf u}_{h})]||_{\sigma}^{2}.
\end{array}
\end{equation}
The desired result (\ref{auxiliary problem 3}) with
$c_{14}:=c_{13}c_{15}c_{16}$ follows from (\ref{LEMMA 6.3.5}).
\end{proof}

\begin{remark}\label{new 11} {\it The node with respect to which the quasi-monotone condition
 is violated is called singular node (cf. \cite{Petzoldt}).
 We can derive an alternative   form of (\ref{LEMMA 6.3.5}) as following:
\begin{equation*}
\sum\limits_{K\in\mathcal{T}_{h}}||S^{1/2}\nabla(\tilde{p}_{h}-s)||_{K}^{2}\leq
c_{14}\sum\limits_{K\in\mathcal{T}_{h}}\xi_{K}^{2},
\end{equation*}
where
\begin{equation*}
\xi_{K}^{2} :=\left \{ \begin{array}{ll}
  \sum\limits_{\sigma\in\varepsilon_{K}}h_{\sigma} ||[\gamma_{{\bf
t}_{\sigma}}(S^{-1/2}{\bf u}_{h})]||_{\sigma}^{2},\ \  & \mbox{if}\;  \ \ \ K\ {\rm has\ no\ singular\ nodes},\\
   \sum\limits_{\sigma\in\varepsilon_{K}}C_{S,\omega_{K}}h_{\sigma}
||[\gamma_{{\bf t}_{\sigma}}(S^{-1}{\bf u}_{h})]||_{\sigma}^{2},\ \
& \mbox{if}\; \ \ K\ {\rm includes\ a\ singular\ node}
 \end{array}\right.
\end{equation*}
with
 $C_{S,\omega_{K}}:=\max_{K'\in\tilde{\omega}_{K}}C_{S,K'}$.}
\end{remark}

%We now state a computable estimate of
%$\{\sum\limits_{K\in\mathcal{T}_{h}}c_{{\bf
%w},r,K}||\tilde{p}_{h}-s||_{K}^{2}\}^{1/2}$ with the help of ${\bf
%u}_{h}$ and $p_{h}$.
\begin{lemma}\label{dual problem 2}
Let $\Lambda_{{\bf w},r,K}$ be
  the same  as in (\ref{new 3}) and $s:=I_{MO}(\tilde{p}_{h})$. Then it
holds
\begin{equation}\label{auxiliary problem 4}
\{\sum\limits_{K\in\mathcal{T}_{h}} c_{{\bf
w},r,K}||\tilde{p}_{h}-s||_{K}^{2}\}^{1/2}\leq\displaystyle
c_{17}\{\sum\limits_{K\in\mathcal{T}_{h}}\Lambda_{{\bf
w},r,K}h_{K}^{2}||S^{-1}{\bf u}_{h}||_{K}^{2}\}^{1/2}.
\end{equation}

\end{lemma}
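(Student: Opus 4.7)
The plan is to derive a purely local bound of the form $||\tilde p_h - s||_K \le c\,h_K||S^{-1}{\bf u}_h||_{\tilde\omega_K}$, multiply by $c_{{\bf w},r,K}$, sum over $K\in\mathcal{T}_h$, and then collapse the resulting patch-wise right-hand side into the target elementwise quantity using the definition of $\Lambda_{{\bf w},r,K}$ together with shape regularity.

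First, I would invoke the $L^{2}$-analog of the Karakashian estimate (\ref{Oswald interpolation 1}) for the modified Oswald interpolation,
\begin{equation*}
||\tilde p_h - s||_K^{2} \le c\sum_{\sigma:\sigma\cap K\neq\emptyset} h_\sigma||[\tilde p_h]||_\sigma^{2},
\end{equation*}
which follows from the same vertex/side-mean scaling argument underlying (\ref{Oswald interpolation 1}) -- the $L^{2}$-norm simply gains an additional factor $h_K^{2}$ relative to the $H^{1}$-seminorm bound. I would then plug in (\ref{LEMMA 6.3.2}) together with the postprocessing identity (\ref{LEMMA 6.3.3}) to replace each jump $||[\tilde p_h]||_\sigma$ by $c\,h_\sigma\,||[\gamma_{{\bf t}_\sigma}(S^{-1}{\bf u}_h)]||_\sigma$.

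Next, since ${\bf u}_h\in RT_0(\mathcal{T}_h)$ and $S|_K$ is constant by Assumption (D1), the function $S^{-1}{\bf u}_h$ is polynomial on each element, so a trace plus inverse estimate on each element of $\omega_\sigma$ yields $||[\gamma_{{\bf t}_\sigma}(S^{-1}{\bf u}_h)]||_\sigma^{2} \le c\,h_\sigma^{-1}||S^{-1}{\bf u}_h||_{\omega_\sigma}^{2}$. Chaining the three previous estimates and using $h_\sigma\sim h_K$ from shape regularity gives the local bound
\begin{equation*}
||\tilde p_h - s||_K^{2} \le c\,h_K^{2}\,||S^{-1}{\bf u}_h||_{\tilde\omega_K}^{2}.
\end{equation*}
Multiplying by $c_{{\bf w},r,K}$ and summing over $K$, I would then interchange the order of summation: for each fixed $K'\in\mathcal{T}_h$ the contribution of $||S^{-1}{\bf u}_h||_{K'}^{2}$ arises only from those $K$ with $K'\in\tilde\omega_K$, of which there are $O(1)$ many by shape regularity, each time weighted by $c_{{\bf w},r,K}\le \Lambda_{{\bf w},r,K'}$ and by $h_K^{2}\sim h_{K'}^{2}$. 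This produces exactly (\ref{auxiliary problem 4}).

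The main obstacle is the $L^{2}$-version of the modified Oswald estimate, which is not stated explicitly in the excerpt. The cleanest route is the vertex/side-mean scaling argument indicated above. Alternatively one can sidestep it by decomposing $\tilde p_h-s$ on each $K$ into its mean $(\tilde p_h-s)_K = p_K - s_K$, controlled directly via the side-mean continuity of $\tilde p_h\in W_0(\mathcal{T}_h)$ and the homogeneous boundary condition $s\in H_0^{1}(\Omega)$, and its zero-mean affine remainder, which is bounded by $h_K||\nabla(\tilde p_h-s)||_K$ and hence by the already-available gradient estimate (\ref{Oswald interpolation 1}) through the same chain that leads to $h_K||S^{-1}{\bf u}_h||_{\tilde\omega_K}$.
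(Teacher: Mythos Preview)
Your proposal is correct and follows essentially the same line as the paper's proof. The only minor difference is in how you bound the jump $||[\tilde p_h]||_\sigma$: you pass through the tangential derivative via the side Poincar\'e inequality (\ref{LEMMA 6.3.2})--(\ref{LEMMA 6.3.3}) and then apply a trace/inverse estimate to return to the volume, whereas the paper goes directly via trace theory and the Friedrichs inequality (\ref{Friedrichs inequality}) to obtain $||[\tilde p_h]||_\sigma \le c\,h_\sigma^{1/2}||\nabla_h\tilde p_h||_{\omega_\sigma}$, using that the side-mean of $[\tilde p_h]$ vanishes; both routes yield the same local bound (\ref{LEMMA 6.4.2}), and the remaining summation/interchange argument is identical.
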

\begin{proof} Following the line of the proof of THEOREM 2.2 in \cite{Karakashian},
we obtain
%($\sigma\cap K\neq\emptyset$ when $\sigma$ contains a vertex of $K$)
\begin{equation}\label{LEMMA 6.4.1}
||\tilde{p}_{h}-s||_{K}^{2}\leq c_{18}\sum\limits_{\sigma:\sigma\cap
K\neq\emptyset} h_{\sigma}||[\tilde{p}_{h}]||_{\sigma}^{2}.
\end{equation}

Let $\tilde{p}_{\sigma} :=<1,\tilde{p}_{h}>_{\sigma}/|\sigma|$
denote the mean of the postprocessed scalar variable $\tilde{p}_{h}$
over a side $\sigma\in\varepsilon_{h}$. From the trace theory and
generalized Friedrichs inequality (\ref{Friedrichs inequality}), we
obtain
\begin{equation}\label{Oswald interpolation 2}
||[\tilde{p}_{h}]||_{\sigma}\leq
c_{19}h_{\sigma}^{1/2}||\nabla_{h}\tilde{p}_{h}||_{\omega_{\sigma}}.
\end{equation}

A combination of (\ref{LEMMA 6.4.1}), (\ref{Oswald interpolation 2})
and the postprocessing (\ref{postprocessed variable 1}) yields that
\begin{equation}\label{LEMMA 6.4.2}
||\tilde{p}_{h}-s||_{K}^{2}\leq c_{20}\sum\limits_{\sigma,\sigma\cap
K\neq\emptyset} h_{\sigma}^{2}||S^{-1}{\bf
u}_{h}||_{\omega_{\sigma}}^{2}.
\end{equation}
Summing (\ref{LEMMA 6.4.2}) over each element $K$, noticing that the
mesh is local quasi-uniform, and combining the definition of
$\Lambda_{{\bf w},r,K}$, we finally get
\begin{equation*}
\begin{array}{lll}
\displaystyle\sum\limits_{K\in\mathcal{T}_{h}} c_{{\bf
w},r,K}||\tilde{p}_{h}-s||_{K}^{2}&\leq&\displaystyle
c_{20}\sum\limits_{K\in\mathcal{T}_{h}}c_{{\bf
w},r,K}\sum\limits_{\sigma,\sigma\cap K\neq\emptyset}
h_{\sigma}^{2}||S^{-1}{\bf u}_{h}||_{\omega_{\sigma}}^{2}\vspace{2mm}\\
&\leq&c_{17}^{2}\sum\limits_{K\in\mathcal{T}_{h}}\Lambda_{{\bf
w},r,K}h_{K}^{2}||S^{-1}{\bf u}_{h}||_{K}^{2}.
\end{array}
\end{equation*}
\end{proof}

\begin{remark} {\rm(Alternative form)}\ {\it For a side
$\sigma\in\varepsilon_{h}$, we denote
$%\begin{equation*}
\Lambda_{{\bf w},r,\sigma}
:=\max_{K,K\cap\sigma\neq\emptyset}\{c_{{\bf w},r,K}\}.
$%\end{equation*}
   A combination of (\ref{LEMMA 6.3.2}), (\ref{LEMMA 6.3.3}) and
(\ref{LEMMA 6.4.1}) yields
\begin{equation}\label{LEMMA 6.5.1}
||\tilde{p}_{h}-s||_{K}^{2}\leq c_{21}\sum\limits_{\sigma,\sigma\cap
K\neq\emptyset} h_{\sigma}^{2}||[\gamma_{{\bf
t}_{\sigma}}(S^{-1}{\bf u}_{h})]||_{\sigma}^{2},
\end{equation}
which leads to an alternative form of the estimate (\ref{auxiliary problem 4}),
%By summing (\ref{LEMMA 6.5.1}) over each element $K$, and noticing that the mesh is
%local quasi-uniform, and combining the definition of $\Lambda_{{\bf w},r,\sigma}$, we obtain
\begin{equation*}
\{\sum\limits_{K\in\mathcal{T}_{h}} c_{{\bf
w},r,K}||\tilde{p}_{h}-s||_{K}^{2}\}^{1/2}\leq
c_{22}\{\sum\limits_{\sigma\in\varepsilon_{h}} \Lambda_{{\bf
w},r,\sigma}h_{\sigma}^{2}||[\gamma_{{\bf t}_{\sigma}}(S^{-1}{\bf
u}_{h})]||_{\sigma}^{2}\}^{1/2}.
\end{equation*}
This inequality shows that the term
$\{\sum\limits_{K\in\mathcal{T}_{h}} c_{{\bf
w},r,K}||\tilde{p}_{h}-s||_{K}^{2}\}^{1/2}$ can be absorbed into
$%\begin{equation*}
\{\sum\limits_{K\in\mathcal{T}_{h}}
||S^{1/2}\nabla(\tilde{p}_{h}-s)||_{K}^{2}\}^{1/2}
$ %\end{equation*}
when $\Lambda_{{\bf w},r,\sigma}h_{\sigma}\leq\Lambda_{\sigma}$.}
\end{remark}

The following corollary is a combined result of LEMMAs
\ref{auxialiary problem 2}-\ref{dual problem 2}.

\begin{corollary}\label{du-xu1}\ {\it Let $\eta_{NC,K}$ be  defined as in (\ref{new 3}) and $s:=I_{MO}(\tilde{p}_{h})$. Then it holds
\begin{equation}\label{corollary 6.5}
\displaystyle|||\tilde{p}_{h}-s|||_{\Omega}\leq
c_{23}\{\sum\limits_{K\in\mathcal{T}_{h}}\eta_{NC,K}^{2}\}^{1/2}.
\end{equation}}
\end{corollary}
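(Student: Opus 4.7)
The plan is to derive the corollary as a direct combination of Lemmas \ref{auxialiary problem 2} and \ref{dual problem 2}, using the splitting of the energy norm $|||\cdot|||_\Omega$ into its diffusive and reactive parts.

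First I would expand the definition of the energy (semi)norm to write
\begin{equation*}
|||\tilde{p}_{h}-s|||_{\Omega}^{2}=\sum_{K\in\mathcal{T}_{h}}\|S^{1/2}\nabla(\tilde{p}_{h}-s)\|_{K}^{2}+\sum_{K\in\mathcal{T}_{h}}c_{\mathbf{w},r,K}\|\tilde{p}_{h}-s\|_{K}^{2}.
\end{equation*}
This reduces the task to bounding the two sums on the right separately. The first sum is controlled, by Lemma \ref{auxialiary problem 2}, by $c_{14}^{2}\sum_{\sigma\in\varepsilon_{h}}\Lambda_{\sigma}h_{\sigma}\|[\gamma_{\mathbf{t}_{\sigma}}(S^{-1}\mathbf{u}_{h})]\|_{\sigma}^{2}$, and the second is controlled, by Lemma \ref{dual problem 2}, by $c_{17}^{2}\sum_{K\in\mathcal{T}_{h}}\Lambda_{\mathbf{w},r,K}h_{K}^{2}\|S^{-1}\mathbf{u}_{h}\|_{K}^{2}$.

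Next I would re-index the side-based sum as an element-based one in order to match the shape of $\eta_{NC,K}^{2}$. Since each interior side is shared by exactly two elements and each boundary side belongs to exactly one element, the weights $\delta_{\sigma}=1/2$ and $\delta_{\sigma}=1$ in the definition of $\eta_{NC,K}$ are precisely designed so that
\begin{equation*}
\sum_{\sigma\in\varepsilon_{h}}\Lambda_{\sigma}h_{\sigma}\|[\gamma_{\mathbf{t}_{\sigma}}(S^{-1}\mathbf{u}_{h})]\|_{\sigma}^{2}=\sum_{K\in\mathcal{T}_{h}}\sum_{\sigma\in\varepsilon_{K}}\delta_{\sigma}\Lambda_{\sigma}h_{\sigma}\|[\gamma_{\mathbf{t}_{\sigma}}(S^{-1}\mathbf{u}_{h})]\|_{\sigma}^{2}.
\end{equation*}
After this repackaging, adding the two bounds gives $|||\tilde{p}_{h}-s|||_{\Omega}^{2}\leq\max(c_{14}^{2},c_{17}^{2})\sum_{K\in\mathcal{T}_{h}}\eta_{NC,K}^{2}$, from which the stated inequality follows with $c_{23}:=\max(c_{14},c_{17})$.

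There is really no hard step here; the corollary is essentially a bookkeeping consequence of the two preceding lemmas. The only point that deserves a brief mention in the write-up is the side-to-element re-indexing with the $\delta_{\sigma}$ weights, which is the reason those weights were inserted into the definition of $\eta_{NC,K}^{2}$ in the first place.
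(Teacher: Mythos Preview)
Your proposal is correct and follows exactly the approach of the paper, which simply states that the corollary is a combined result of Lemmas \ref{auxialiary problem 2} and \ref{dual problem 2}; you have merely spelled out the straightforward combination, including the side-to-element re-indexing via the weights $\delta_{\sigma}$, which the paper leaves implicit.
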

%\begin{proof} Since $s=I_{MO}(\tilde{p}_{h})$, and notice that
%\begin{equation}\label{corollary 6.5.1}
%\begin{array}{lll}
%\displaystyle|||\tilde{p}_{h}-s|||_{\Omega}
%&=&\displaystyle\{\sum\limits_{K\in\mathcal{T}_{h}}(||S^{1/2}\nabla(\tilde{p}_{h}-s)||_{K}^{2}+
%c_{{\bf
%w},r,K}||\tilde{p}_{h}-s||_{K}^{2})\}^{1/2}\vspace{2mm}\\
%&\leq&\displaystyle\{\sum\limits_{K\in\mathcal{T}_{h}}||S^{1/2}\nabla(\tilde{p}_{h}-s)||_{K}^{2}\}^{1/2}+
%\{\sum\limits_{K\in\mathcal{T}_{h}}c_{{\bf w},r,K}||\tilde{p}_{h}-s||_{K}^{2}\}^{1/2}.
%\end{array}
%\end{equation}
%The assertion (\ref{corollary 6.5}) follows from (\ref{corollary
%6.5.1}), LEMMA \ref{auxialiary problem 2} and \ref{dual problem 2}.
%\end{proof}
%Next, we estimate the convection component $T_{C}(\varphi,s)$ defined
%in (\ref{covection component 1}).
%Since $s\in H_{0}^{1}(\Omega)$ in
%$T_{C}(\varphi,s)$ is arbitrary, we can stand for $s$ by the modified
%Oswald interpolation $I_{MO}(\tilde{p}_{h})$, in order to give a
%computable estimate of $T_{C}(\varphi,s)$ with the help of ${\bf
%u}_{h}$ and $p_{h}$.
\begin{lemma}\label{du-xu2}\ {\rm(Convection estimator.)}\ {\it Let
$T_{C}(\varphi,s)$  be  defined as in
(\ref{covection component 1}) with $|||\varphi|||_{\Omega}=1$ and $s:=I_{MO}(\tilde{p}_{h})$, and $\eta_{C,K}$ be defined as in
(\ref{new 4}). Then it holds
\begin{equation}\label{Convection estimates 1}
\displaystyle T_{C}(\varphi,s)\leq
c_{24}\{\sum\limits_{K\in\mathcal{T}_{h}}\eta_{C,K}^{2}\}^{1/2}.
\end{equation}}
\end{lemma}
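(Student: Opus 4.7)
The plan is to split $T_C(\varphi,s)$ via the product rule into
\begin{equation*}
T_C(\varphi,s)=T_{C1}+T_{C2},\quad T_{C1}:=\sum_{K\in\mathcal{T}_h}({\bf w}\cdot\nabla(\tilde p_h-s),\varphi)_K,\quad T_{C2}:=\tfrac12\sum_{K\in\mathcal{T}_h}((\tilde p_h-s)\nabla\cdot{\bf w},\varphi)_K,
\end{equation*}
and to control each summand by one of the two ingredients of $\eta_{C,K}^2$ in (\ref{new 4}); the normalization $|||\varphi|||_\Omega=1$ is used throughout.

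For $T_{C2}$ I would apply Cauchy--Schwarz and use $\|\varphi\|_K\le c_{{\bf w},r,K}^{-1/2}|||\varphi|||_K$, noting that the case $c_{{\bf w},r,K}=0$ is harmless since assumption (D6) then forces $(\nabla\cdot{\bf w})|_K+r_K=0$ and, together with $c_{{\bf w},r,K}=0$, yields $(\nabla\cdot{\bf w})|_K=0$, so the $K$-term vanishes. The estimate $\|\tilde p_h-s\|_K^2\le c\sum_{\sigma:\sigma\cap K\ne\emptyset}h_\sigma^2\|S^{-1}{\bf u}_h\|_{\omega_\sigma}^2$ of (\ref{LEMMA 6.4.2}), together with swapping the summations by local shape regularity and majorising $C_{\nabla\cdot{\bf w},K}/\sqrt{c_{{\bf w},r,K}}$ by $\Lambda_{\nabla\cdot{\bf w},K}$ on the relevant patches, recovers the first part $\Lambda_{\nabla\cdot{\bf w},K}^2h_K^2\|S^{-1}{\bf u}_h\|_K^2$ of $\eta_{C,K}^2$.

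The crux is $T_{C1}$, whose side jumps must be weighted by $\Lambda_{{\bf w},\sigma}=\min\{\lambda_{{\bf w},\sigma},p_{{\bf w},\sigma}\}$ on a per-side basis. I would derive two independent bounds. \emph{Bound (a), direct:} Cauchy--Schwarz with $\|\varphi\|_K\le c_{{\bf w},r,K}^{-1/2}|||\varphi|||_K$ and the elementwise jump estimate $\|\nabla(\tilde p_h-s)\|_K^2\le c\sum_\sigma h_\sigma\|[\gamma_{{\bf t}_\sigma}(S^{-1}{\bf u}_h)]\|_\sigma^2$ extracted from (\ref{LEMMA 6.3.1})--(\ref{LEMMA 6.3.3}) produces a side sum weighted by $\lambda_{{\bf w},\sigma}^2$. \emph{Bound (b), integration by parts:}
\begin{equation*}
T_{C1}=-\sum_K((\tilde p_h-s){\bf w},\nabla\varphi)_K-\sum_K((\tilde p_h-s)\nabla\cdot{\bf w},\varphi)_K+\sum_{\sigma\in\varepsilon_h^{\rm int}}\langle[\tilde p_h]\varphi,{\bf w}\cdot{\bf n}\rangle_\sigma,
\end{equation*}
where the exterior-side contributions vanish because $\varphi\in H_0^1(\Omega)$ and $s$ is continuous. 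The middle term is absorbed into the $T_{C2}$ estimate; the volume term is handled with $\|\nabla\varphi\|_K\le c_{S,K}^{-1/2}|||\varphi|||_K$, (\ref{LEMMA 6.4.1}) and $\|[\tilde p_h]\|_\sigma\le ch_\sigma\|[\gamma_{{\bf t}_\sigma}(S^{-1}{\bf u}_h)]\|_\sigma$ from (\ref{LEMMA 6.3.2})--(\ref{LEMMA 6.3.3}); and the jump integral by the same jump estimate and the trace inequality (\ref{proof1}). Both pieces produce a side sum weighted by $p_{{\bf w},\sigma}^2$.

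The main obstacle is merging bounds (a) and (b) into a single $\sum_\sigma\Lambda_{{\bf w},\sigma}^2 h_\sigma\|[\gamma_{{\bf t}_\sigma}(S^{-1}{\bf u}_h)]\|_\sigma^2$-type estimate, since in general the minimum of two global square roots is \emph{not} bounded by the square root of sidewise minima. I would resolve this by decomposing $\tilde p_h-s$ into side-localised contributions (available from the per-vertex averaging structure of the modified Oswald interpolation, so that each nonzero jump yields a bump supported in its side patch) and then applying bound (a) or bound (b) separately at each side $\sigma$, choosing whichever of $\lambda_{{\bf w},\sigma}$ or $p_{{\bf w},\sigma}$ is smaller. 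A final relabeling of side sums as element sums under the bounded local multiplicity of $\mathcal{T}_h$ then completes the estimate (\ref{Convection estimates 1}).
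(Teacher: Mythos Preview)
Your overall architecture matches the paper's: two separate estimates for $T_C$, one producing the weight $\lambda_{{\bf w},\sigma}$ and one producing $p_{{\bf w},\sigma}$, with the $\Lambda_{\nabla\cdot{\bf w},K}$ volume part handled the same way in both. The differences are in how the second bound is obtained and in the merging step.

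\medskip
\textbf{The $p_{{\bf w},\sigma}$ bound.} The paper does not integrate by parts on $K$ and track interior side jumps as you do in your bound (b). Instead it exploits a specific property of the modified Oswald interpolation: $I_{MO}$ preserves side means of $\tilde p_h$, and since ${\bf w}\cdot{\bf n}$ is constant on each side, $(\nabla\cdot((\tilde p_h-s){\bf w}),\varphi_K)_K=\langle(\tilde p_h-s){\bf w}\cdot{\bf n},\varphi_K\rangle_{\partial K}=0$. This lets one subtract the elementwise mean $\varphi_K$ for free in the convective term and write
\[
(\nabla\cdot(v{\bf w})-\tfrac12 v\nabla\cdot{\bf w},\varphi)_K=({\bf w}\cdot\nabla v,\varphi-\varphi_K)_K+(\tfrac12 v\nabla\cdot{\bf w},\varphi)_K-(v\nabla\cdot{\bf w},\varphi_K)_K,
\]
after which Poincar\'e gives $\|\varphi-\varphi_K\|_K\le ch_K\|\nabla\varphi\|_K\le ch_Kc_{S,K}^{-1/2}|||\varphi|||_K$, producing the factor $h_KC_{{\bf w},K}/\sqrt{c_{S,K}}$ directly. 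No side jump integrals of $[\tilde p_h]\varphi$ appear. This is shorter than your route and avoids the trace inequality.

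\medskip
\textbf{The merging step.} You are right to flag it: the paper simply derives the two global inequalities $T_C\le c_{25}\{A+\sum_\sigma\lambda_{{\bf w},\sigma}^2 x_\sigma\}^{1/2}$ and $T_C\le c_{26}\{A+\sum_\sigma p_{{\bf w},\sigma}^2 x_\sigma\}^{1/2}$ and then asserts the result with $c_{24}=\max\{c_{25},c_{26}\}$, i.e.\ it takes the minimum of the two global bounds and identifies this with the estimator built from sidewise minima. As you observe, $\min(\sqrt{\sum_\sigma a_\sigma},\sqrt{\sum_\sigma b_\sigma})\le\sqrt{\sum_\sigma\min(a_\sigma,b_\sigma)}$ is false in general, so the paper's final line is, strictly speaking, a gap. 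Your idea of localising the argument side by side is the natural repair; note however that $I_{MO}$ is a \emph{vertex}-based averaging, so the support of $\tilde p_h-s$ decomposes into vertex patches rather than side patches, and you would need to carry the min through that decomposition with a little extra bookkeeping.
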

\begin{proof}
 By  triangle inequality and H\"{o}lder inequality we
obtain
\begin{equation}\label{convection estimates 2}
\begin{array}{lll}
T_{C}(\varphi,s)&\leq&\displaystyle\sum\limits_{K\in\mathcal{T}_{h}}\{C_{{\bf
w},K}||\nabla(\tilde{p}_{h}-s)||_{K}||\varphi||_{K}+\frac{1}{2}C_{\nabla\cdot{\bf
w},K}||\tilde{p}_{h}-s||_{K}||\varphi||_{K}\}\vspace{2mm}\\
&\leq&\displaystyle
\{\sum\limits_{K\in\mathcal{T}_{h}}
(\frac{C_{{\bf
w},K}^{2}}{c_{S,K}c_{{\bf
w},r,K}}||S^{1/2}\nabla(\tilde{p}_{h}-s)||_{K}^{2}+\frac{C_{\nabla\cdot{\bf
w},K}^{2}}{4c_{{\bf
w},r,K}}||\tilde{p}_{h}-s||_{K}^{2})\}^{1/2}.
\end{array}
\end{equation}
Apply (\ref{LEMMA 6.3.4}) and (\ref{LEMMA 6.4.2}) to the
inequality (\ref{convection estimates 2}), and combine the
definitions of $\lambda_{{\bf w},\sigma}$ and
$\Lambda_{\nabla\cdot{\bf w},K}$, we then arrive at
% ($\sigma\cap
%K\neq\emptyset$ when $\sigma$ contains a vertex of $K$)
\begin{equation}\label{LEMMA 6.6.1}
\begin{array}{lll}
T_{C}(\varphi,s)&\leq&\displaystyle
c_{25}\{\sum\limits_{K\in\mathcal{T}_{h}} (\frac{C_{{\bf
w},K}^{2}C_{S,K}}{c_{S,K}c_{{\bf
w},r,K}}\sum\limits_{\sigma:\sigma\cap
K\neq\emptyset}h_{\sigma}||[\gamma_{{\bf t}_{\sigma}}(S^{-1}{\bf
u}_{h})]||_{\sigma}^{2}\vspace{2mm}\\
&\ &\displaystyle+\frac{C_{\nabla\cdot{\bf w},K}^{2}}{4c_{{\bf
w},r,K}}\sum\limits_{\sigma:\sigma\cap K\neq\emptyset}h_{\sigma}^{2}||S^{-1}{\bf u}_{h}||_{\omega_{\sigma}}^{2})\}^{1/2}\vspace{2mm}\\
&\leq&\displaystyle
c_{25}\{\sum\limits_{\sigma\in\varepsilon_{h}}\lambda_{{\bf
w},\sigma}^{2}h_{\sigma} ||[\gamma_{{\bf t}_{\sigma}}(S^{-1}{\bf
u}_{h})]
||_{\sigma}^{2}+\sum\limits_{K\in\mathcal{T}_{h}}\Lambda_{\nabla\cdot{\bf
w},K}^{2}h_{K}^{2}||S^{-1}{\bf u}_{h}||_{K}^{2}\}^{1/2}
\end{array}
\end{equation}
Since the modified Oswald interpolation
$s=\mathcal{I}_{MO}(\tilde{p}_{h})$ preserves the mean of
$\tilde{p}_{h}$ on the side, and ${\bf w}\cdot{\bf n}$ is constant
over a side, it holds
\begin{equation*}
(\nabla\cdot((\tilde{p}_{h}-s){\bf w}),\varphi_{K})_{K}=
<(\tilde{p}_{h}-s){\bf w}\cdot{\bf n},\varphi_{K}>_{\partial K}=0,
\end{equation*}
where $\varphi_{K}$ is the mean of $\varphi$ over $K$. Write $v
:=\tilde{p}_{h}-s$, then we have
\begin{equation}\label{LEMMA 6.6.2}
(\nabla\cdot(v{\bf w})-1/2v\nabla\cdot{\bf w},\varphi)_{K}=(\nabla v\cdot{\bf w},\varphi-\varphi_{K})_{K}
+(1/2v\nabla\cdot{\bf w},\varphi)_{K}-(v\nabla\cdot{\bf w},\varphi_{K})_{K}.
\end{equation}

A combination of  (\ref{LEMMA
6.6.2}), (\ref{Poicare inequality}), (\ref{LEMMA 6.3.4}),
(\ref{LEMMA 6.4.2}) and H\"{o}lder inequality yields
%($\sigma\cap K\neq\emptyset$ when $\sigma$ contains a vertex of $K$)
\begin{equation}\label{LEMMA 6.6.3}
\begin{array}{lll}
T_{C}(\varphi,s)&\leq&\displaystyle\sum\limits_{K\in\mathcal{T}_{h}}(\frac{h_{K}C_{{\bf w},K}}{\sqrt{c_{S,K}}}||S^{1/2}\nabla (\tilde{p}_{h}-s)||_{K}
+\frac{3C_{\nabla\cdot{\bf w},K}}{2\sqrt{c_{{\bf w},r,K}}}||\tilde{p}_{h}-s||_{K})|||\varphi|||_{K}\vspace{2mm}\\
&\leq&\displaystyle
c_{26}\{\sum\limits_{K\in\mathcal{T}_{h}}\frac{h_{K}^{2}C_{{\bf
w},K}^{2}}{c_{S,K}} \sum\limits_{\sigma,\sigma\cap
K\neq\emptyset}h_{\sigma}||[\gamma_{{\bf t}_{\sigma}}(S^{-1}{\bf
u}_{h})]
||_{\sigma}^{2}\vspace{2mm}\\
&\
&\displaystyle+\sum\limits_{K\in\mathcal{T}_{h}}\frac{C_{\nabla\cdot{\bf
w},K}^{2}}{c_{{\bf w},r,K}} \sum\limits_{\sigma,\sigma\cap
K\neq\emptyset}h_{\sigma}^{2}||S^{-1}{\bf
u}_{h}||_{\omega_{\sigma}}^{2}\}^{1/2}.
\end{array}
\end{equation}
This estimate, together with the definitions of $p_{{\bf w},\sigma}$ and $\Lambda_{\nabla\cdot{\bf w},K}$, indicates
$T_{C}(\varphi,s)$ from (\ref{LEMMA 6.6.3})
\begin{equation}\label{LEMMA 6.6.4}
T_{C}(\varphi,s)\leq\displaystyle
c_{26}\{\sum\limits_{\sigma\in\varepsilon_{h}}p_{{\bf
w},\sigma}^{2}h_{\sigma} ||[\gamma_{{\bf t}_{\sigma}}(S^{-1}{\bf
u}_{h})]
||_{\sigma}^{2}+\sum\limits_{K\in\mathcal{T}_{h}}\Lambda_{\nabla\cdot{\bf
w},K}^{2}h_{K}^{2}||S^{-1}{\bf u}_{h}||_{K}^{2}\}^{1/2}.
\end{equation}
The desired result (\ref{Convection estimates 1}) follows from (\ref{LEMMA 6.6.1}) and
(\ref{LEMMA 6.6.4}) with
$c_{24}=\max\{c_{25},c_{26}\}$.
% in the light of the definition of $\Lambda_{{\bf w},\sigma}$.
\end{proof}\\

{\bf Proof of THEOREMs \ref{global error estimate}-\ref{Global
upwind}}: For the centered mixed scheme, the desired result
(\ref{global error estimate 1}) follows from LEMMA \ref{abstract
error estimate 1}, LEMMA \ref{residual estimator (global)},
Corollary \ref{du-xu1}, LEMMA \ref{du-xu2} with the positive
constant $c_{1}=2\sqrt{2}\max(1,c_{8},c_{23},c_{24})$. For the
upwind-weighted mixed scheme, the assertion (\ref{global error
estimate 2}) follows from   LEMMA \ref{residual estimator
(global)}-\ref{upwind estimator (global)}, Corollary \ref{du-xu1},
 LEMMA \ref{du-xu2} and LEMMA \ref{abstract error estimate 1}with
 $c_{2}=\sqrt{10}\max(1,c_{8},c_{9},c_{23},c_{24})$.
\\

\begin{remark}\
{\rm(Two approaches in   a posteriori error
analysis)}
\ {\it There are usually two approaches in literature in the a posteriori error
analysis. One is directly based on the  solution of the
discretization scheme, the other one is based on the postprocessed approximation. Seemingly,
these two approaches are fully different. Our analysis establishes a link between them, i.e. 
 a posteriori error estimates  based on the discretization solution
 can be derived with the help of the postprocessing
technique. In doing so, one can avoid 
the use of Helmholtz decomposition of the stress variable which is required in traditional a posteriori error analysis for mixed finite elements.}
\end{remark}

\begin{remark}\ {\rm(Pure diffusion problem)}\ {\it When ${\bf w}=r=0$, the model (\ref{convection-diffufsion-equations1}) is reduced to a pure
diffusion problem. In this case,
the fact that $-\nabla\cdot(S_{K}\nabla\tilde{p}_{h}|_{K})=\nabla\cdot{\bf u}_{h}|_{K}=f_{K}$ for all
$K\in\mathcal{T}_{h}$ with $f_{K}$  the mean value of $f$ over $K$ indicates
\begin{equation}\label{remark 6.9.1}
\eta_{D,K}=0,\ \eta_{C,K}=0, \ \eta_{R,K}^{2} =\frac{h_{K}^{2}}{c_{S,K}}||f-f_{K}||_{K}^{2},\
\eta_{NC,K}^{2}
=\sum\limits_{\sigma\in\varepsilon_{K}}\delta_{\sigma}\Lambda_{\sigma}h_{\sigma}
||[\gamma_{{\bf t}_{\sigma}}(S^{-1}{\bf u}_{h})]||_{\sigma}^{2}.
\end{equation}
Thus the a posteriori error
estimate (\ref{global error estimate 1}) is reduced to
\begin{equation}\label{remark 6.9.3}
\mathcal{E}\leq
c_{27}\{\sum\limits_{K\in\mathcal{T}_{h}}(\frac{h_{K}^{2}}{c_{S,K}}||f-f_{K}||_{K}^{2}+
\sum\limits_{\sigma\in\varepsilon_{K}}\delta_{\sigma}\Lambda_{\sigma}h_{\sigma}||[\gamma_{{\bf
t}_{\sigma}}(S^{-1}{\bf u}_{h})]||_{\sigma}^{2})\}^{1/2}
\end{equation}
with $
\mathcal{E} =\{\sum\limits_{K\in\mathcal{T}_{h}}
||S^{-1/2}({\bf u}-{\bf u}_{h})||_{K}^{2}\}^{1/2}.
$
 In addition,  Remark \ref{new 11} implies an alternative estimate
\begin{equation}\label{new 12}
\mathcal{E}\leq
c_{27}\{\sum\limits_{K\in\mathcal{T}_{h}}(\frac{h_{K}^{2}}{c_{S,K}}||f-f_{K}||_{K}^{2}+
\xi_{K}^{2}\}^{1/2}.
\end{equation}
%where $\xi_{K}^{2}$ is the same as in Remark \ref{new 11}. 
Note that being an oscillation term,
the first term in the right side of (\ref{remark
6.9.3}) or (\ref{new 12}) may not be computed in practice.}
\end{remark}

\begin{remark}\label{du-xu3}\ {\rm(A posteriori error estimate of
divergence of the stress variable.)}\ {\it
The continuous weak formulation of
(\ref{convection-diffufsion-equations1}) reads as: Find $({\bf
u},p)\in{\bf{H}({\rm div},\Omega)}\times L^{2}(\Omega)$ such that
\begin{equation*}
(S^{-1}{\bf u},{\bf v})_{\Omega}-(p,\nabla\cdot{\bf v})_{\Omega}=0\
\ \ {\rm for\ all}\ {\bf v}\in{\bf{H}({\rm div},\Omega)},
\end{equation*}
\begin{equation}\label{remarke 6.5}
(\nabla\cdot{\bf u},\varphi)_{\Omega}-(S^{-1}{\bf u}\cdot{\bf
w},\varphi)_{\Omega}+((r+\nabla\cdot{\bf
w})p,\varphi)_{\Omega}=(f,\varphi)_{\Omega}\ \ \ {\rm for\ all}\ \
\varphi\in L^{2}(\Omega).
\end{equation}
Notice that (\ref{remarke 6.5}) can be equivalently written as: For
each $K\in\mathcal{T}_{h}$
\begin{equation}\label{remark 6.5a}
(\nabla\cdot{\bf u},\varphi)_{K}-(S^{-1}{\bf u}\cdot{\bf
w},\varphi)_{K}+((r+\nabla\cdot{\bf
w})p,\varphi)_{K}=(f,\varphi)_{K}\ \ \ {\rm for\ all}\ \ \varphi\in
L^{2}(K).
\end{equation}
Meanwhile, the centered mixed finite element scheme (\ref{centered
mixed scheme 2}) can be equivalently written as: For every
$K\in\mathcal{T}_{h}$
\begin{equation}\label{remark 6.5b}
(\nabla\cdot{\bf u}_{h},\varphi)_{K}-(S^{-1}{\bf u}_{h}\cdot{\bf
w},\varphi)_{K}+((r+\nabla\cdot{\bf
w})p_{h},\varphi)_{K}=(f,\varphi)_{K}\ \ \ {\rm for\ all}\ \
\varphi\in P_{0}(K).
\end{equation}
Let $\overline{R}_{K}$ denote the mean of the elementwise residual
\begin{equation*}
R_{K} :=f-\nabla\cdot{\bf u}_{h}+(S^{-1}{\bf u}_{h})\cdot{\bf
w}-(r+\nabla\cdot{\bf w})p_{h}
\end{equation*}
over $K\in\mathcal{T}_{h}$, and set $0\leq\iota\leq1$. We define the data
oscillation ${\rm osc}_{h}$ as
\begin{equation*}
{\rm osc}_{h}:
=\{\sum\limits_{K\in\mathcal{T}_{h}}h_{K}^{2\iota}||R_{K}-\overline{R}_{K}||_{K}^{2}
\}^{1/2}.
\end{equation*}
For any $\varphi\in L^{2}(K)$, let $\varphi_{K}$ denote the
mean of $\varphi$ over $K\in\mathcal{T}_{h}$, then a combination of
(\ref{remark 6.5a}) and (\ref{remark 6.5b}) yields
\begin{equation*}
\begin{array}{lll}
&\ &(\nabla\cdot({\bf u}-{\bf u}_{h}),\varphi)_{K}=(\nabla\cdot{\bf
u},\varphi)_{K}-(\nabla\cdot{\bf
u}_{h},\varphi-\varphi_{K})_{K}-(\nabla\cdot{\bf
u}_{h},\varphi_{K})_{K}\vspace{2mm}\\
&=&(R_{K},\varphi-\varphi_{K})_{K}+(S^{-1}({\bf u}-{\bf
u}_{h})\cdot{\bf w},\varphi)_{K}-((r+\nabla\cdot{\bf
w})(p-p_{h}),\varphi)_{K}\vspace{2mm}\\
&=&(R_{K}-\overline{R}_{K},\varphi-\varphi_{K})_{K}+(S^{-1}({\bf
u}-{\bf u}_{h})\cdot{\bf w},\varphi)_{K}-((r+\nabla\cdot{\bf
w})(p-p_{h}),\varphi)_{K}\vspace{2mm}\\
&\leq&(||R_{K}-\overline{R}_{K}||_{K}+||S^{-1}({\bf u}-{\bf
u}_{h})||_{K}||{\bf w}||_{L^{\infty}(K)}+C_{{\bf
w},r,K}||p-p_{h}||_{K})||\varphi||_{K}.
\end{array}
\end{equation*}
This means that
\begin{equation}\label{remark6.5c}
\begin{array}{lll}
&\ &||\nabla\cdot({\bf u}-{\bf u}_{h})||_{K}=\sup_{\varphi\in
L^{2}(K),\varphi\neq0}\frac{(\nabla\cdot({\bf u}-{\bf
u}_{h}),\varphi)_{K}}{||\varphi||_{L^{2}(K)}}\vspace{2mm}\\
&\ &\ \ \leq||R_{K}-\overline{R}_{K}||_{K}+\frac{||{\bf
w}||_{L^{\infty}(K)}}{\sqrt{c_{S,K}}}||S^{-1/2}({\bf u}-{\bf
u}_{h})||_{K}+C_{{\bf w},r,K}||p-p_{h}||_{K}.
\end{array}
\end{equation}
From (\ref{remark6.5c}) and (\ref{global error estimate 1}) we
obtain the following a posteriori error estimate of the divergence of the
stress variable for the centered mixed finite element scheme:
\begin{equation*}
||h^{\iota}\nabla\cdot({\bf u}-{\bf u}_{h})||\leq
_{28}\{\{\sum\limits_{K\in\mathcal{T}_{h}}(\eta_{R,K}^{2}+\eta_{C,K}^{2}+
\eta_{NC,K}^{2}+\eta_{D,K}^{2})\}^{1/2}\beta_{{\rm c}}+{\rm
osc}_{h}\},
\end{equation*}
where the constant $
\beta_{{\rm c}} :=\max_{K\in\mathcal{T}_{h}}\max\{\frac{||{\bf
w}||_{L^{\infty}(K)}}{\sqrt{c_{S,K}}}h_{K}^{\iota},\frac{C_{{\bf
w},r,K}}{\sqrt{c_{{\bf w},r,K}}}h_{K}^{\iota}\}.
$ }
\end{remark}

\section{Analysis of local efficiency}\
Using  standard arguments we easily derive 
lemmas \ref{efficiency residual }-\ref{efficiency jump}.
\begin{lemma}\label{efficiency residual }
Denote $v :=f-\nabla\cdot{\bf u}_{h}+(S^{-1}{\bf u}_{h})\cdot{\bf
w}-(r+\nabla\cdot{\bf w})p_{h}$, and let $\mathcal{E}_{K}$ be the
local error for the stress and displacement defined in (\ref{new-1}). Under Assumption
$(D5)$ for $f$,  it holds
\begin{equation}\label{efficiency residual 1}
h_{K}||v||_{K}\leq c_{29}\max\{\sqrt{C_{S,K}}+\frac{C_{{\bf
w},K}}{\sqrt{c_{S,K}}}h_{K},\frac{C_{{\bf w},r,K}}{\sqrt{c_{{\bf
w},r,K}}}h_{K}\}\mathcal{E}_{K}.
\end{equation}
\end{lemma}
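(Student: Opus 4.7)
The plan is to use Verf\"{u}rth's standard bubble function technique applied to the element residual $v$. First I would observe that $v|_K$ is a polynomial of bounded degree: indeed, under Assumption $(D5)$, $f|_K$ is a polynomial, $\nabla\cdot{\bf u}_h|_K$ is a constant, $(S^{-1}{\bf u}_h)\cdot{\bf w}|_K$ is a polynomial (since both factors are affine on $K$ by $(D1)$, $(D2)$), and $(r+\nabla\cdot{\bf w})p_h|_K$ is a constant by $(D3)$ and $(D2)$. Letting $\psi_K$ be the usual interior bubble function on $K$ (vanishing on $\partial K$, with $0\leq\psi_K\leq 1$), the norm equivalence on polynomials gives $\|v\|_K^2 \lesssim (v,\psi_K v)_K$, and the standard inverse estimate gives $\|\nabla(\psi_K v)\|_K \lesssim h_K^{-1}\|v\|_K$ together with $\|\psi_K v\|_K \leq \|v\|_K$.

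The next step is to rewrite $v$ in terms of the true error. Using the strong form of (\ref{continuous problem 2}) together with ${\bf u}=-S\nabla p$, one obtains the pointwise identity $f = \nabla\cdot{\bf u} - (S^{-1}{\bf u})\cdot{\bf w} + (r+\nabla\cdot{\bf w})p$, whence
\begin{equation*}
v = \nabla\cdot({\bf u}-{\bf u}_h) - \bigl(S^{-1}({\bf u}-{\bf u}_h)\bigr)\cdot{\bf w} + (r+\nabla\cdot{\bf w})(p-p_h).
\end{equation*}
Testing this identity against $\psi_K v\in H_0^1(K)$ and integrating by parts on the first term (the boundary contribution vanishes since $\psi_K$ vanishes on $\partial K$) yields
\begin{equation*}
(v,\psi_K v)_K = -({\bf u}-{\bf u}_h,\nabla(\psi_K v))_K - ((S^{-1}({\bf u}-{\bf u}_h))\cdot{\bf w},\psi_K v)_K + ((r+\nabla\cdot{\bf w})(p-p_h),\psi_K v)_K.
\end{equation*}

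I would then estimate the three terms separately by Cauchy--Schwarz. For the first, writing ${\bf u}-{\bf u}_h = S^{1/2}(S^{-1/2}({\bf u}-{\bf u}_h))$ and using the $S$-bound from $(D1)$ together with the inverse estimate gives a factor $\sqrt{C_{S,K}}\,h_K^{-1}$ times $\|S^{-1/2}({\bf u}-{\bf u}_h)\|_K\|v\|_K$. For the second, pulling out $C_{{\bf w},K}$ by $(D2)$ and writing $S^{-1}({\bf u}-{\bf u}_h) = S^{-1/2}\cdot S^{-1/2}({\bf u}-{\bf u}_h)$ yields a factor $C_{{\bf w},K}/\sqrt{c_{S,K}}$ times $\|S^{-1/2}({\bf u}-{\bf u}_h)\|_K\|v\|_K$. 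The third term is bounded directly by $C_{{\bf w},r,K}\|p-p_h\|_K\|v\|_K$, which I rewrite as $(C_{{\bf w},r,K}/\sqrt{c_{{\bf w},r,K}})\cdot\sqrt{c_{{\bf w},r,K}}\|p-p_h\|_K\|v\|_K$; when $c_{{\bf w},r,K}=0$, Assumption $(D6)$ forces $C_{{\bf w},r,K}=0$ and the term is absent. Multiplying through by $h_K$ and dividing by $\|v\|_K$ gives the bound on $h_K\|v\|_K$ with the three factors $\sqrt{C_{S,K}}$, $h_K C_{{\bf w},K}/\sqrt{c_{S,K}}$, and $h_K C_{{\bf w},r,K}/\sqrt{c_{{\bf w},r,K}}$ acting on the pieces of $\mathcal{E}_K$, which after grouping yields (\ref{efficiency residual 1}).

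The only genuinely delicate point is the bookkeeping in Assumption $(D6)$ to handle vanishing $c_{{\bf w},r,K}$ (so that the quotient $C_{{\bf w},r,K}/\sqrt{c_{{\bf w},r,K}}$ is interpreted as $0$), and the choice of multiplying by $h_K$ at the right moment so that the first term produces $\sqrt{C_{S,K}}$ rather than $\sqrt{C_{S,K}}/h_K$. Everything else is a routine bubble-function argument; no Helmholtz decomposition or postprocessing is needed since $v$ is already element-local.
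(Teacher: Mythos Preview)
Your proposal is correct and is exactly the ``standard argument'' the paper has in mind: the paper does not spell out a proof of this lemma but simply states that it follows from standard arguments, and the Verf\"{u}rth bubble-function technique you outline (norm equivalence for polynomials, rewriting $v$ via the strong PDE, integration by parts against $\psi_K v$, and term-by-term estimation) is precisely that standard argument. Your handling of the $c_{{\bf w},r,K}=0$ case via Assumption $(D6)$ is also the intended interpretation.
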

\begin{lemma}\label{efficiency jump}
It holds
\begin{equation}\label{efficiency jump 1}
h_{\sigma}^{1/2}||[\gamma_{{\bf t}_{\sigma}}(S^{-1}{\bf u}_{h})]
||_{\sigma}\leq c_{30}c_{\omega_{\sigma}}||S^{-1/2}({\bf u}-{\bf
u}_{h})||_{\omega_{\sigma}}.
\end{equation}
\end{lemma}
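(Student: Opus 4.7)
The plan is to exploit the fact that $S^{-1}\mathbf{u}=-\nabla p$ with $p\in H_0^1(\Omega)$, so the tangential trace of $S^{-1}\mathbf{u}$ behaves like that of a gradient of a globally $H^1_0$ function. This lets us convert a jump of $S^{-1}\mathbf{u}_h$ (which has no obvious smallness) into a jump of the error $S^{-1}(\mathbf{u}_h-\mathbf{u})$ (which does), after which a standard edge-bubble argument finishes the job.

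First I would establish the identity $[\gamma_{{\bf t}_\sigma}(S^{-1}\mathbf{u})]|_\sigma=0$ for every $\sigma\in\varepsilon_h$. For an interior edge this is the classical fact that the tangential component of $\nabla p$ is continuous across edges when $p\in H^1$ (the tangential derivative depends only on values of $p$ on $\sigma$, which are single-valued). For $\sigma\in\varepsilon_h^{\rm ext}$ the homogeneous Dirichlet condition $p|_{\partial\Omega}=0$ forces the tangential derivative of $p$ along $\sigma$ to vanish. Consequently
\begin{equation*}
[\gamma_{{\bf t}_\sigma}(S^{-1}\mathbf{u}_h)]\big|_\sigma
=[\gamma_{{\bf t}_\sigma}(S^{-1}\mathbf{u}_h-S^{-1}\mathbf{u})]\big|_\sigma.
\end{equation*}

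Next I would apply the standard edge-residual technique of Verf\"urth. Because $S$ is piecewise constant (Assumption~(D1)) and $\mathbf{u}_h|_K$ is linear, the tangential-jump function $\phi_\sigma:=[\gamma_{{\bf t}_\sigma}(S^{-1}\mathbf{u}_h)]$ is a polynomial on $\sigma$. Multiplying by the edge bubble $b_\sigma$ and extending polynomially to $\omega_\sigma$, one has the equivalence $\|\phi_\sigma\|_\sigma^2\lesssim \int_\sigma\phi_\sigma(b_\sigma\tilde\phi_\sigma)$ together with the inverse estimates $\|b_\sigma\tilde\phi_\sigma\|_{\omega_\sigma}\lesssim h_\sigma^{1/2}\|\phi_\sigma\|_\sigma$ and $\|\nabla(b_\sigma\tilde\phi_\sigma)\|_{\omega_\sigma}\lesssim h_\sigma^{-1/2}\|\phi_\sigma\|_\sigma$. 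Using the previous identity to replace $\phi_\sigma$ by $[\gamma_{{\bf t}_\sigma}(S^{-1}(\mathbf{u}_h-\mathbf{u}))]$ and integrating against the test extension (which vanishes on $\partial\omega_\sigma\setminus\sigma$), the interior terms involve only $\|S^{-1}(\mathbf{u}_h-\mathbf{u})\|_{\omega_\sigma}$, so that shape regularity yields
\begin{equation*}
h_\sigma^{1/2}\|[\gamma_{{\bf t}_\sigma}(S^{-1}\mathbf{u}_h)]\|_\sigma
\lesssim \|S^{-1}(\mathbf{u}-\mathbf{u}_h)\|_{\omega_\sigma}.
\end{equation*}

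Finally I would convert the right-hand $L^2$ norm into the weighted norm appearing in the statement. By Assumption (D1), on each $K\subset\omega_\sigma$ the smallest eigenvalue of $S_K^{-1/2}$ is $\geq c_{S,K}^{-1/2}$ on the left but the largest is $\leq c_{S,K}^{-1/2}$ too, giving $\|S^{-1}(\mathbf{u}-\mathbf{u}_h)\|_K\leq c_{S,K}^{-1/2}\|S^{-1/2}(\mathbf{u}-\mathbf{u}_h)\|_K$. Summing over the one or two elements in $\omega_\sigma$ produces exactly the factor $c_{\omega_\sigma}$ defined in Theorem~\ref{new 7}, delivering (\ref{efficiency jump 1}) with a constant $c_{30}$ that depends only on the dimension and the shape-regularity parameter.

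The main obstacle is keeping the bubble-function constants independent of the coefficients $S,\mathbf{w},r$: one must do all bubble estimates in the raw quantity $S^{-1}\mathbf{u}_h$ (for which $S$ only enters as an algebraic scaling) and postpone the conversion to $S^{-1/2}$-weighted norms to the very last step, since otherwise the inverse inequalities would pick up the contrast ratio $C_{S,K}/c_{S,K}$.
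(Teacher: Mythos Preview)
Your plan is correct and is exactly the ``standard argument'' the paper invokes: in the text the authors do not give a proof of this lemma at all, merely stating that it (together with the preceding residual lemma) follows from standard arguments. Your edge--bubble scheme with the final eigenvalue conversion $\|S^{-1}(\mathbf u-\mathbf u_h)\|_{K'}\le c_{S,K'}^{-1/2}\|S^{-1/2}(\mathbf u-\mathbf u_h)\|_{K'}$ is precisely that argument.

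One refinement to make when you write it out: the ``replacement'' of $[\gamma_{{\bf t}_\sigma}(S^{-1}\mathbf u_h)]$ by $[\gamma_{{\bf t}_\sigma}(S^{-1}(\mathbf u_h-\mathbf u))]$ should not be performed pointwise on $\sigma$, since $S^{-1}\mathbf u=-\nabla p$ with $p$ merely in $H^1_0(\Omega)$ need not have an $L^2$ tangential trace. Instead carry it out through the integration by parts with ${\rm Curl}(b_\sigma\tilde\phi_\sigma)$ on each $K'\subset\omega_\sigma$: use that ${\rm curl}_h(S^{-1}\mathbf u_h)=0$ elementwise (immediate from the postprocessing identity $S^{-1}\mathbf u_h|_K=-\nabla\tilde p_h|_K$) and that $\int_{\omega_\sigma}\nabla p\cdot{\rm Curl}\psi=0$ for $\psi\in H^1_0(\omega_\sigma)$. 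This yields $\int_\sigma\phi_\sigma\psi=\int_{\omega_\sigma}S^{-1}(\mathbf u-\mathbf u_h)\cdot{\rm Curl}\psi$ rigorously, after which your inverse estimates and the definition of $c_{\omega_\sigma}$ finish the bound with a constant depending only on shape regularity, as you claim.
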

\begin{lemma}\label{efficiency volumn }
It holds
\begin{equation}\label{efficiency volumn 1}
h_{K}||S^{-1}{\bf u}_{h}||_{K}\leq
c_{31}\max\{\frac{h_{K}}{\sqrt{c_{S,K}}},\frac{1}{\sqrt{c_{{\bf
w},r,K}}}\}\mathcal{E}_{K}.
\end{equation}
\end{lemma}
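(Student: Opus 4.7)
The plan is to use the postprocessed approximation $\tilde{p}_h$ as a bridge between $\|S^{-1}\mathbf{u}_h\|_K$ and the local error $\mathcal{E}_K$. By the postprocessing relation (\ref{postprocessed variable 1}), $\nabla \tilde{p}_h|_K = -S_K^{-1}\mathbf{u}_h$, so $\|S^{-1}\mathbf{u}_h\|_K = \|\nabla \tilde{p}_h\|_K$ and the task reduces to bounding $h_K\|\nabla \tilde{p}_h\|_K$.

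First I would observe that $\tilde{p}_h|_K$ is affine and, by (\ref{postprocessed variable 2}), the function $q := \tilde{p}_h - p_h|_K$ has zero mean on $K$. A standard scaling argument to the reference simplex, exploiting that the space of affine functions with vanishing mean is finite-dimensional, yields the inverse-type inequality
\begin{equation*}
h_K\|\nabla \tilde{p}_h\|_K = h_K\|\nabla q\|_K \leq C\|q\|_K = C\|\tilde{p}_h - p_h\|_K.
\end{equation*}
Applying the triangle inequality, it then suffices to control $\|\tilde{p}_h - p\|_K$ and $\|p-p_h\|_K$ separately.

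For the postprocessing error, I would split $\tilde{p}_h - p = \phi + c_K$, where $c_K := \frac{1}{|K|}\int_K(\tilde{p}_h - p)\,dx = p_h|_K - \bar{p}_K$ with $\bar{p}_K := \frac{1}{|K|}\int_K p\,dx$. Since $\phi$ has zero mean, the Poincar\'e inequality (\ref{Poicare inequality}) gives $\|\phi\|_K \leq C_{P,d} h_K \|\nabla(\tilde{p}_h - p)\|_K$; but $\nabla(\tilde{p}_h - p) = -S^{-1}(\mathbf{u}_h - \mathbf{u})$ on $K$, and the ellipticity bound $S^{-1} \leq c_{S,K}^{-1}I$ implies
\begin{equation*}
\|\nabla(\tilde{p}_h - p)\|_K \leq c_{S,K}^{-1/2}\|S^{-1/2}(\mathbf{u}-\mathbf{u}_h)\|_K \leq c_{S,K}^{-1/2}\mathcal{E}_K.
\end{equation*}
The constant part satisfies $|c_K|\sqrt{|K|} \leq \|p-p_h\|_K$, which follows directly from the orthogonal decomposition $\|p-p_h\|_K^2 = \|p - \bar p_K\|_K^2 + |K|(\bar p_K - p_h|_K)^2$.

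Combining everything:
\begin{equation*}
h_K\|S^{-1}\mathbf{u}_h\|_K \leq C\left(\frac{h_K}{\sqrt{c_{S,K}}}\mathcal{E}_K + 2\|p-p_h\|_K\right).
\end{equation*}
When $c_{\mathbf{w},r,K}>0$, the definition (\ref{new-1}) gives $\|p-p_h\|_K \leq c_{\mathbf{w},r,K}^{-1/2}\mathcal{E}_K$, yielding the claim with an appropriate $c_{31}$; when $c_{\mathbf{w},r,K}=0$ the second factor on the right-hand side of (\ref{efficiency volumn 1}) is understood as $+\infty$ and the inequality holds trivially. I expect the only delicate point to be the scaling-robust inverse inequality for $q$, which is routine given the local shape regularity of $\mathcal{T}_h$; the rest is essentially a careful use of Poincar\'e and the ellipticity of $S$.
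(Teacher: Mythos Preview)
Your argument is correct in spirit and yields the desired bound, but note one slip: $\tilde p_h|_K$ is \emph{quadratic}, not affine, since $\nabla\tilde p_h|_K=-S_K^{-1}\mathbf{u}_h|_K=-S_K^{-1}(\mathbf{a}_K+b_K\mathbf{x})$ is linear in $\mathbf{x}$. This does not damage the proof: the scaling-based inverse inequality $h_K\|\nabla q\|_K\le C\|q\|_K$ for zero-mean polynomials holds just as well on the finite-dimensional space of quadratics, so everything downstream goes through unchanged.

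Your route is genuinely different from the paper's. The paper uses the classical bubble-function technique: it writes $S^{-1}\mathbf{u}_h=S^{-1}\mathbf{u}_h+\nabla p_h$ (since $p_h$ is piecewise constant), inserts the element bubble $\psi_K$ to obtain
\[
\|S^{-1}\mathbf{u}_h\|_K^2\lesssim(\psi_K S^{-1}\mathbf{u}_h,\,S^{-1}(\mathbf{u}_h-\mathbf{u}))_K+(\psi_K S^{-1}\mathbf{u}_h,\,\nabla(p_h-p))_K,
\]
integrates the second term by parts, and applies an inverse estimate to $\nabla\cdot(\psi_K S^{-1}\mathbf{u}_h)$. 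Both arguments land on the same intermediate inequality
\[
\|S^{-1}\mathbf{u}_h\|_K\lesssim c_{S,K}^{-1/2}\|S^{-1/2}(\mathbf{u}-\mathbf{u}_h)\|_K+h_K^{-1}\|p-p_h\|_K,
\]
but yours reaches it via the postprocessed scalar $\tilde p_h$ and Poincar\'e, exploiting structure specific to this mixed setting, whereas the paper's bubble-function argument is the standard residual-efficiency device and does not invoke $\tilde p_h$ at all. Your approach is arguably more in keeping with the paper's overall philosophy of using $\tilde p_h$ as a transition; the paper's approach is more portable to situations where no such postprocessing is available.
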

\begin{proof} For all $K\in\mathcal{T}_{h}$, let $\psi_{K}$ denote the bubble
function on $K$ with zero boundary values on $K$ and
$0\leq\psi_{K}\leq1$. The relation $S^{-1}{\bf
u}_{h}|_{K}=(S^{-1}{\bf u}_{h}+\nabla p_{h})|_{K}$ for all
$K\in\mathcal{T}_{h}$ shows
\begin{equation}\label{efficiency volumn 2}
\begin{array}{lll}
||S^{-1}{\bf u}_{h}||_{K}^{2}&=&||S^{-1}{\bf u}_{h}+\nabla
p_{h}||_{K}^{2}\vspace{2mm}\\
&\leq&c_{32}||\psi_{K}^{1/2}(S^{-1}{\bf u}_{h}+\nabla
p_{h})||_{K}^{2}\vspace{2mm}\\
&=&c_{32}(\psi_{K}S^{-1}{\bf u}_{h},S^{-1}{\bf u}_{h}+\nabla
p_{h})_{K}\vspace{2mm}\\
&=&c_{32}\{(\psi_{K}S^{-1}{\bf u}_{h},S^{-1}({\bf u}_{h}-{\bf
u}))_{K}\vspace{2mm}\\
&\ &+(\psi_{K}S^{-1}{\bf u}_{h},S^{-1}{\bf u}+\nabla p_{h})_{K}\}.
\end{array}
\end{equation}
Integration by parts implies
\begin{equation}\label{efficiency volumn 3}
\begin{array}{lll}
(\psi_{K}S^{-1}{\bf u}_{h},S^{-1}{\bf u}+\nabla
p_{h})_{K}&=&(\psi_{K}S^{-1}{\bf
u}_{h},\nabla(p_{h}-p))_{K}\vspace{2mm}\\
&=&-(\nabla\cdot(\psi_{K}S^{-1}{\bf u}_{h}),p_{h}-p)_{K}.
\end{array}
\end{equation}
A combination of (\ref{efficiency volumn 2}), (\ref{efficiency
volumn 3}) and  inverse inequality imply
\begin{equation}\label{efficiency volumn 4}
||S^{-1}{\bf u}_{h}||_{K}^{2}\leq
c_{32}\{\frac{1}{\sqrt{c_{S,K}}}||S^{-1/2}({\bf u}-{\bf
u}_{h})||_{K}+h_{K}^{-1}||p-p_{h}||_{K}\}||S^{-1}{\bf u}_{h}||_{K}.
\end{equation}
The desired result (\ref{efficiency volumn 1}) then follows with
$c_{31}:=\sqrt{2}c_{32}$.
\end{proof}

\begin{lemma}\label{efficiency upwind-scheme}
{ It holds
\begin{equation*}
 h_{K}^{1/2}||\hat{\hat{p}}||_{\sigma}\leq \left\{\begin{array}{ll}
  c_{33}|\sigma|^{-1/2}(1/2-\nu_{\sigma})\left(h_{K}||S^{-1}{\bf u}_{h}||_{K}+
  h_{L}||S^{-1}{\bf u}_{h}||_{L}\right) & \text{ if } \sigma=\bar{K}\cap \bar{L},\\
  c_{33}|\sigma|^{-1/2}(1-\nu_{\sigma})h_{K}||S^{-1}{\bf u}_{h}||_{K} & \text{ if } \sigma\in\varepsilon_{K}\cap\varepsilon_{h}^{\rm ext}.
\end{array} \right.
\end{equation*}
}
\end{lemma}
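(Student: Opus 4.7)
The plan is to handle the interior and boundary sides in parallel by reducing the claim to a pointwise bound on the constant $\hat{\hat{p}}_\sigma$, and then to invoke the postprocessing together with the Friedrichs inequality~(\ref{Friedrichs inequality}). Since $\hat{\hat{p}}_\sigma$ is piecewise constant on $\sigma$, I first write $\|\hat{\hat{p}}_\sigma\|_\sigma = |\sigma|^{1/2}|\hat{\hat{p}}_\sigma|$, and use the definitions (\ref{hat-hat-p1})--(\ref{hat-hat-p2}) to pull out the factor $(1/2-\nu_\sigma)$ (respectively $\nu_\sigma$ or $1-\nu_\sigma$ for the boundary subcases), so that what remains to estimate is $|p_K - p_L|$ on interior sides and $|p_K|$ on boundary sides.

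For the interior case $\sigma = \bar K \cap \bar L$, the key point is that $\tilde p_h \in W_0(\mathcal{T}_h)$, so the side mean $\tilde p_\sigma := \langle 1, \tilde p_h\rangle_\sigma/|\sigma|$ is unambiguously defined from either side of $\sigma$. Using (\ref{postprocessed variable 2}) to identify $p_K$ with $\tilde p_K$ (the mean of $\tilde p_h$ over $K$) and similarly $p_L = \tilde p_L$, I decompose $p_K - p_L = (\tilde p_K - \tilde p_\sigma) - (\tilde p_L - \tilde p_\sigma)$ and apply the first part of (\ref{Friedrichs inequality}) on each elementwise difference, invoking (\ref{postprocessed variable 1}) to replace $\|\nabla \tilde p_h\|_K$ by $\|S^{-1}{\bf u}_h\|_K$. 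This yields $|p_K - p_L| \le \sqrt{3d}\,\bigl(h_K|K|^{-1/2}\|S^{-1}{\bf u}_h\|_K + h_L|L|^{-1/2}\|S^{-1}{\bf u}_h\|_L\bigr)$. The boundary case reduces to a single application of the same Friedrichs estimate, since the $W_0$ condition forces $\tilde p_\sigma = 0$; the two flow-dependent subcases in (\ref{hat-hat-p2}) are then bounded uniformly by the larger factor $1-\nu_\sigma$.

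Finally, multiplying through by $h_K^{1/2}|\sigma|^{1/2}$ produces a factor of $h_K^{1/2}|\sigma|^{1/2}h_K|K|^{-1/2}$ in front of $\|S^{-1}{\bf u}_h\|_K$, which I recast in the target form $|\sigma|^{-1/2}h_K$ by exploiting the shape-regularity comparability $|K| \sim h_K|\sigma|$ together with $h_K \sim h_L \sim h_\sigma$ across a shared side; all dimensionless factors are absorbed into $c_{33}$. I expect the chief obstacle to be carrying out this final scaling bookkeeping cleanly so that $c_{33}$ depends only on $d$ and the shape-regularity parameter $c_0$; once the postprocessing--Friedrichs chain is in place the rest is a routine scaling exercise, with the boundary case being slightly simpler since no mean $\tilde p_\sigma$ has to be introduced.
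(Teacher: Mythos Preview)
Your approach is essentially the paper's: both exploit the continuity of the side mean of $\tilde p_h$ (from $\tilde p_h\in W_0(\mathcal T_h)$) to rewrite $p_K-p_L=(\tilde p_K-\tilde p_\sigma)-(\tilde p_L-\tilde p_\sigma)$ and then pass to $\|\nabla\tilde p_h\|_K=\|S^{-1}{\bf u}_h\|_K$ via~(\ref{postprocessed variable 1}). Your direct use of the pointwise Friedrichs bound~(\ref{Friedrichs inequality}) on $\tilde p_K-\tilde p_\sigma$ is a cleaner variant of what the paper does, namely applying a trace estimate to the mean-zero function $p_h-\tilde p_h$ on $K$; the two routes yield the same gradient control.

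There is, however, one genuine slip in your final scaling step. Substituting $|K|\sim h_K|\sigma|$ into $h_K^{1/2}|\sigma|^{1/2}h_K|K|^{-1/2}$ gives $h_K$, not $|\sigma|^{-1/2}h_K$ as you claim, so your chain does not land on the stated right-hand side. The paper recovers the extra $|\sigma|^{-1/2}$ factor differently: it first bounds the $L^1$ quantity $\int_\sigma|\hat{\hat p}_\sigma|$ by $\|p_h-\tilde p_h\|_{\partial K}+\|p_h-\tilde p_h\|_{\partial L}$ and then converts via $\|\hat{\hat p}_\sigma\|_\sigma=|\sigma|^{-1/2}\int_\sigma|\hat{\hat p}_\sigma|$, so that the $|\sigma|^{-1/2}$ survives to the end. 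If you want to reproduce the lemma exactly as stated, follow that $L^1$-to-$L^2$ route rather than starting from $\|\hat{\hat p}_\sigma\|_\sigma=|\sigma|^{1/2}|\hat{\hat p}_\sigma|$.
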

\begin{proof} If $\sigma=\bar{K}\cap \bar{L}$, from
$\displaystyle\int_{\sigma}\tilde{p}_{h}|_{K}ds=\int_{\sigma}\tilde{p}_{h}|_{L}ds$
we have
\begin{equation*}%\label{efficiency upwind-scheme 1}
 \int_{\sigma}|\hat{\hat{p}}_{\sigma}|=\left \{ \begin{array}{ll}
  \displaystyle\int_{\sigma}(1/2-\nu_{\sigma})\left((p_{h}-\tilde{p}_{h})|_{K}-(p_{h}-
  \tilde{p}_{h})|_{L}\right)\ \  & \mbox{if}\;  \ \ p_{K}\geq p_{L},\vspace{2mm}\\
 \displaystyle\int_{\sigma}(1/2-\nu_{\sigma})\left((p_{h}-\tilde{p}_{h})|_{L}-(p_{h}-
  \tilde{p}_{h})|_{K}\right)\ \  & \mbox{if}\;  \ \ p_{K}<p_{L}.
 \end{array}\right.
\end{equation*}
This relation, together with trace theorem and the
postprocessing (\ref{postprocessed variable 2}), indicates
\begin{equation*}%\label{efficiency upwind-scheme 2}
\begin{array}{lll}
\displaystyle\int_{\sigma}|\hat{\hat{p}}_{\sigma}|&\leq&(1/2-
\nu_{\sigma})(||p_{h}-\tilde{p}_{h}||_{\partial
K}+||p_{h}-\tilde{p}_{h}||_{\partial L})\vspace{2mm}\\
&\leq&c_{33}(1/2-\nu_{\sigma})(h_{K}^{1/2}||\nabla\tilde{p}_{h}||_{K}+
h_{L}^{1/2}||\nabla\tilde{p}_{h}||_{L}),
\end{array}
\end{equation*}
which, together with the local shape
regularity of elements and the postprocessing (\ref{postprocessed
variable 1}), implies
\begin{equation*}
\begin{array}{lll}
h_{K}^{1/2}||\hat{\hat{p}}||_{\sigma}&=&\displaystyle
h_{K}^{1/2}|\hat{\hat{p}}||\sigma|^{1/2}
=h_{K}^{1/2}|\sigma|^{-1/2}\int_{\sigma}|\hat{\hat{p}}_{\sigma}|ds\vspace{2mm}\\
&\leq&c_{33}|\sigma|^{-1/2}(1/2-\nu_{\sigma})\left(h_{K}||S^{-1}{\bf
u}_{h}||_{K}+h_{L}||S^{-1}{\bf u}_{h}||_{L}\right).
\end{array}
\end{equation*}

If
$\sigma\in\varepsilon_{K}\cap\varepsilon_{h}^{\rm ext}$, from
$\displaystyle\int_{\sigma}\tilde{p}_{h}|_{K}ds=0$ and
$\nu_{\sigma}\leq1/2$ the second assertion of the lemma follows.
\end{proof}\\

{\bf Proof of THEOREM \ref{new 5}}: From  the definition of
$\alpha_{*,K}$ in this theorem,  LEMMA \ref{efficiency
volumn } shows
\begin{equation}\label{efficiency displacement and residual}
\eta_{D,K}\leq c_{34}\alpha_{*,K}\mathcal{E}_{K}.
\end{equation}
Denote $v :=f-\nabla\cdot{\bf u}_{h}+(S^{-1}{\bf u}_{h})\cdot{\bf
w}-(r+\nabla\cdot{\bf w})p_{h}$, and then it holds
\begin{equation}\label{efficiency displacement residual 2}
\eta_{R,K}\leq\alpha_{K}||v||_{K}+\beta_{K}||S^{-1}{\bf
u}_{h}||_{K}\leq\frac{h_{K}}{\sqrt{c_{S,K}}}||v||_{K}+C_{{\bf
w},r,K}h_{K}\frac{h_{K}}{\sqrt{c_{S,K}}}||S^{-1}{\bf u}_{h}||_{K}.
\end{equation}
A combination of (\ref{efficiency displacement residual 2}), LEMMA
\ref{efficiency residual } and LEMMA \ref{efficiency volumn }, leads
to
\begin{equation}\label{efficiency displacement residual 1}
\eta_{R,K}\leq c_{35}\alpha_{*,K}\mathcal{E}_{K}.
\end{equation}
The desired result (\ref{new 6}) follows from (\ref{efficiency
displacement and residual}) and (\ref{efficiency displacement
residual 1}) with $c_{3}=\sqrt{2}\max\{c_{34},c_{35}\}$.\\

{\bf Proof of THEOREM \ref{new 7}}: Notice that
\begin{equation}\label{efficiency conforming convection 3}
\eta_{NC,K}\leq\displaystyle \sqrt{\Lambda_{{\bf
w},r,K}}h_{K}||S^{-1}{\bf u}_{h}||_{K}
+\sum\limits_{\sigma\in\varepsilon_{K}}\Lambda_{\sigma}^{1/2}h_{\sigma}^{1/2}||[\gamma_{{\bf
t}_{\sigma}}(S^{-1}{\bf u}_{h})]||_{\sigma}
\end{equation}
and
\begin{equation}\label{efficiency conforming convecton 4}
\eta_{C,K}\leq\displaystyle \Lambda_{\nabla\cdot{\bf
w},K}h_{K}||S^{-1}{\bf u}_{h}||_{K}
+\sum\limits_{\sigma\in\varepsilon_{K}}\Lambda_{{\bf
w},\sigma}h_{\sigma}^{1/2}||[\gamma_{{\bf t}_{\sigma}}(S^{-1}{\bf
u}_{h})]||_{\sigma}.
\end{equation}
From the definitions of $\beta_{*,K}$
and $c_{\omega_{\sigma}}$ in this theorem, we respectively apply LEMMAs \ref{efficiency jump}-\ref{efficiency volumn } to the above
  two inequalities  so as  to obtain
\begin{equation}\label{efficiency conforming and convection 1}
\eta_{NC,K}\leq
c_{36}\{\beta_{*,K}\mathcal{E}_{K}+\sum\limits_{\sigma\in\varepsilon_{K}}
c_{\omega_{\sigma}}\Lambda_{\sigma}^{1/2}||S^{-1/2}({\bf u}-{\bf
u}_{h})||_{\omega_{\sigma}}\}
\end{equation}
and
\begin{equation}\label{efficiency conforming convection 2}
\eta_{C,K}\leq c_{37}\{\beta_{*,K}\mathcal{E}_{K}+
\sum\limits_{\sigma\in\varepsilon_{K}}\Lambda_{{\bf
w},\sigma}c_{\omega_{\sigma}}||S^{-1/2}({\bf u}-{\bf
u}_{h})||_{\omega_{\sigma}}\}.
\end{equation}
The assertion (\ref{new 8}) follows from (\ref{efficiency conforming
and convection 1}) and (\ref{efficiency conforming convection 2}) by
taking $c_{4}:=\sqrt{2(d+1)}\max\{c_{36},c_{37}\}$.\\

{\bf Proof of THEOREM \ref{new 9}}: The local shape regularity of
elements implies
\begin{equation}\label{efficiency upwind estimator 1}
\eta_{U,K}\leq
\frac{c_{38}}{\sqrt{c_{S,K}}}\sum\limits_{\sigma\in\varepsilon_{K}}
|({\bf w}\cdot{\bf
n})|_{\sigma}|(h_{K}^{1/2}||\hat{\hat{p}}_{\sigma}||_{\sigma}+h_{\sigma}||S^{-1}{\bf
u}_{h}||_{\omega_{\sigma}}).
\end{equation}
Then the desired estimate (\ref{new 10}) follows from  (\ref{efficiency upwind estimator 1}), LEMMA
\ref{efficiency upwind-scheme} and the definitions of the constants
$\lambda_{\sigma}$, $\rho_{\sigma}$, and
$\mathcal{E}_{D,\omega_{\sigma}}$.

\section{Numerical experiments}
In this section, we test our proposed posteriori error estimators on
three model problems.

\subsection{Model problem with singularity at the origin}

We consider the problem (\ref{convection-diffufsion-equations1})
in an $L$-shape domain $\Omega=\{(-1,1)\times(0,1)\}\cup\{(-1,0)\times(-1,0)\}$ with ${\bf w}= r=0$ and $f=0$.
The exact solution  is given by
\begin{equation*}
p(\rho,\theta)=\rho^{2/3}\sin(2\theta/3),
\end{equation*}
where $\rho,\theta$ are the polar coordinates.
%A simple calculation shows
%that the right-hand term $f$ of (\ref{convection-diffufsion-equations1}) vanishes.
\begin{figure}[htbp]
  \begin{minipage}[t]{0.5\linewidth}
    \centering
    \includegraphics[width=2.5in]{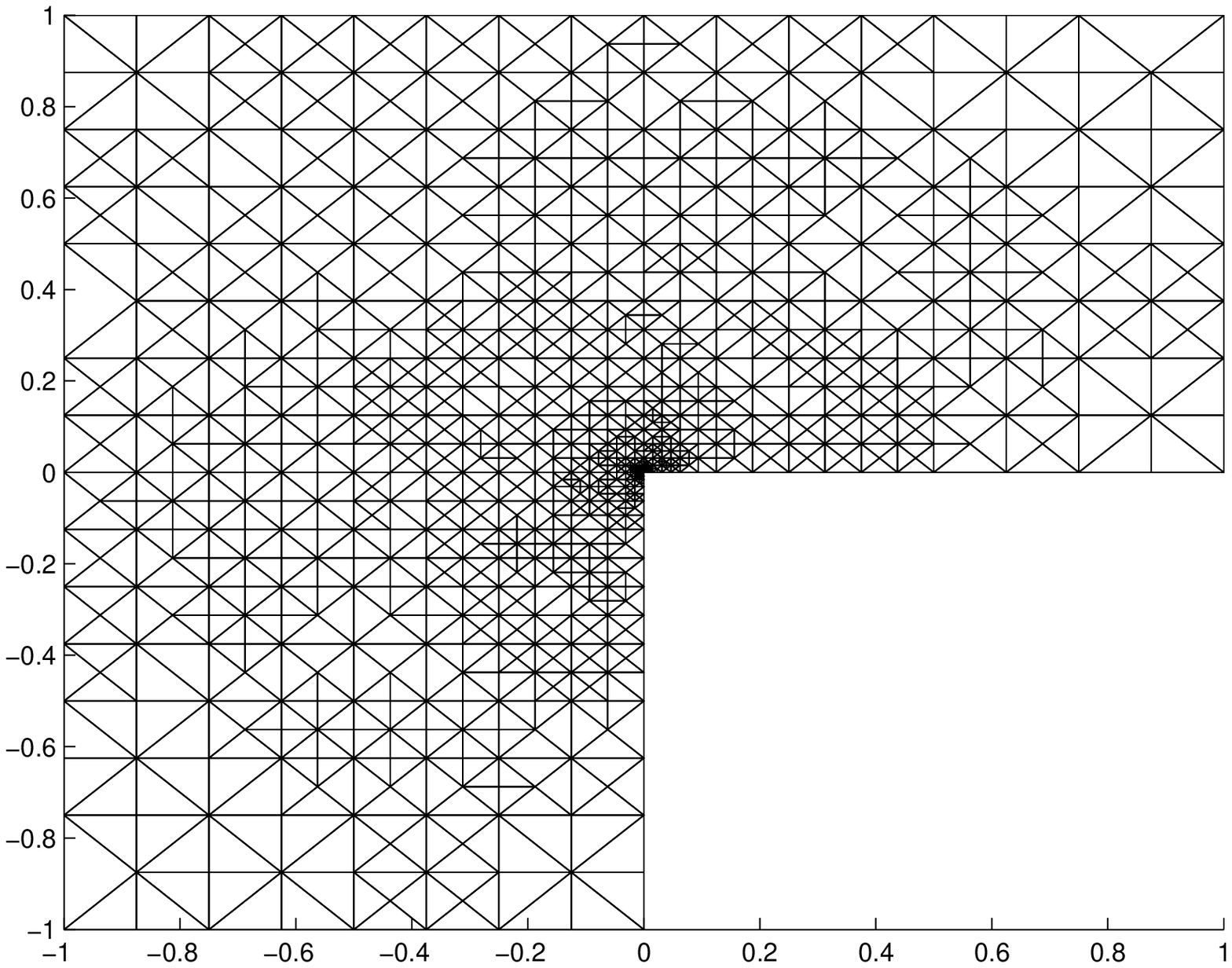}\\
  \end{minipage}
  \begin{minipage}[t]{0.5\linewidth}
    \centering
    \includegraphics[width=2.5in]{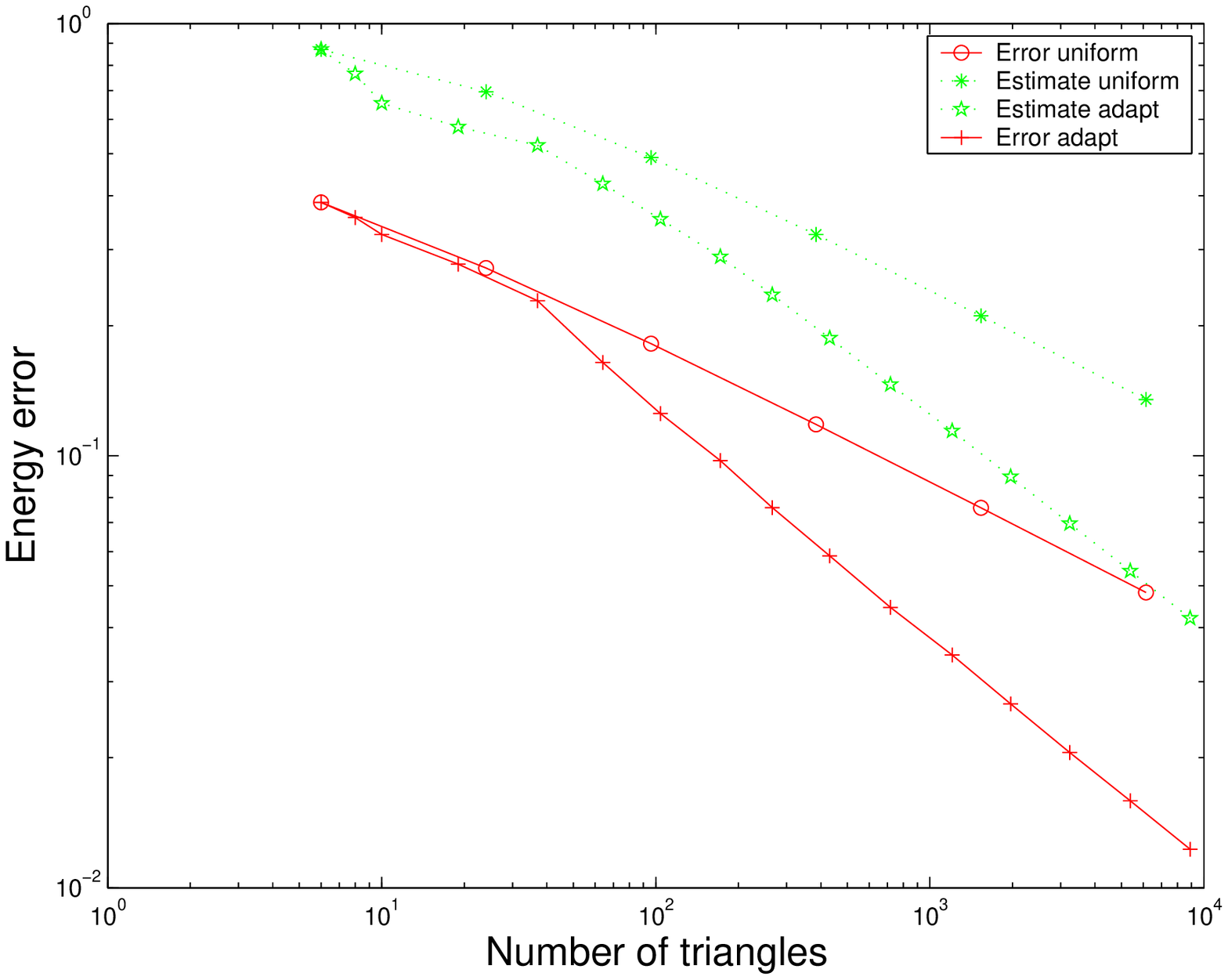}\\
  \end{minipage}
\addtocontents{lof}{figure}{FIG 8.1. {\small {\it A mesh with 1635
triangles
 (left) and the estimated and actual errors
  in uniformly / adaptively refined meshes (right).}}}\\
\end{figure}

It is well known that this model possesses singularity at the
origin. The original mesh consists of 6 right-angled triangles. We
employ the centered mixed scheme described in section 3.1 to compute
the approximaton solution, mark elements in terms of D\"{o}rfler
marking with the marking parameter $\theta=0.5$, and then  use the "longest edge" refinement to recover an
admissible mesh. Specially, the uniform refinement means that all
elements should be marked. We note that in the given case, the residual
estimators $\eta_{R,K}$ vanish over all $K\in\mathcal{T}_{h}$.

We see in the first figure of Fig 8.1 with 1635 elements that the
refinement concentrates around the origin, which means the predicted
error estimator captures well the singularity of the solution. The
second graph of Fig 8.1 reports the estimated and actual errors of
the numerical solutions on uniformly and adaptively refined meshes.
It can be seen that one can substantially reduce the number of
unknowns necessary to obtain the prescribed accuracy by using the a
posteriori error estimates and adaptively refined meshes, and that
the error of the flux in $L^{2}$ norm uniformly reduces  with a
fixed factor on two successive meshes, and that the adaptive mixed
finite element method is a contraction with respect to the energy
error.

\subsection{Model problem with inhomogeneous diffusion tensor\cite{Eigestad,Riviere;Wheeler2003,Vohralik1}}
We consider the problem
(\ref{convection-diffufsion-equations1}) in a square domain
$\Omega=(-1,1)\times(-1,1)$ with ${\bf w}=r=0$ and $f=0$, where $\Omega$ is divided into four subdomains
$\Omega_{i}$ ($i=1,2,3,4$) corresponding to the axis quadrants (in
the counterclockwise direction), and  the
diffusion-dispersion tensor $S$ is piecewise constant matrix with $S=s_{i}I$
in $\Omega_{i}$.  We suppose the exact solution of this model  has the form
\begin{equation*}
p(r,\theta)=r^{\alpha}(a_{i}sin(\alpha\theta)+b_{i}cos(\alpha\theta))
\end{equation*}
in each $\Omega_{i}$ with Dirichlet boundary conditions. Here $r,\theta$ are the polar coordinates in $\Omega$, $a_{i}$ and $b_{i}$ are constants
depending on $\Omega_{i}$, and $\alpha$ is a parameter. We note that the stress solution ${\bf u}=-S\nabla p$ is
not continuous across the interfaces, and only its normal component
  is continuous. It
finally exhibits a strong singularity at the origin.   We
consider two sets of coefficients in the following table:
\begin{center}
\scriptsize
\begin{tabular}{|c|c|} \hline
Case 1 &Case 2\\ \hline
$s_{1}=s_{3}=5$, $s_{2}=s_{4}=1$&$s_{1}=s_{3}=100$, $s_{2}=s_{4}=1$\\ \hline
$\alpha=0.53544095$ &$\alpha=0.12690207$\\ \hline
$a_{1}=\ \ 0.44721360$, $b_{1}=\ \ 1.00000000$&$a_{1}=\ \ 0.10000000$, $b_{1}=\ \ 1.00000000$\\
$a_{2}=-0.74535599$,  $b_{2}=\ \ 2.33333333$&$a_{2}=-9.60396040$, $b_{2}=\ \ 2.96039604$\\
$a_{3}=-0.94411759$,  $b_{3}=\ \ 0.55555555$&$a_{3}=-0.48035487$,  $b_{3}=-0.88275659$\\
$a_{4}=-2.40170264$, $b_{4}=-0.48148148$&$a_{4}=\ \ 7.70156488$, $b_{4}=-6.45646175$\\ \hline
\end{tabular}
\end{center}

The origin mesh consists of 8 right-angled triangles. We use the centered
scheme   compute the approximation solution, and mark elements in terms of D\"{o}rfler
marking with the marking parameter $\theta=0.7$  in the
first case and $\theta=0.94$  in the
second case.  We note that the elementwise
estimators $\xi_{K}$ are used as the a posteriori
error indicators, since the residual estimators $\eta_{R,K}$ vanish over $K\in\mathcal{T}_{h}$.

In Table 8.1 we show for Case 1  some results of  the actual error $E_{k}$, the a
posteriori indicator $\eta_{k}$,  the experimental convergence rate, ${\rm EOC}_{E}$, of  $E_{k}$, and the experimental convergence rate, ${\rm EOC}_{\eta}$,  of $\eta_k$, where
\begin{equation*}
{\rm EOC}_{E} :=\frac{\log(E_{k-1}/E_{k})}{\log({\rm DOF}_{k}/{\rm
DOF}_{k-1})},\ \ \
{\rm EOC}_{\eta} :=\frac{\log(\eta_{k-1}/\eta_{k})}{\log({\rm
DOF}_{k}/{\rm DOF}_{k-1})},
\end{equation*}
  and ${\rm DOF}_{k}$ denotes the number of elements  with
respect to the $k-$th iteration. We can see that the convergence
rates ${\rm EOC}_{E}$ and ${\rm EOC}_{\eta}$ are close to 0.5 as the
iteration number $k=15$, which means the optimal decay of the actual
error and a posteriori error indicator $\eta_{k}$ is almost attained
after 15 iterations with optimal meshes.

\begin{table}[!h]\renewcommand{\baselinestretch}{1.25}\small
\begin{center}
 \caption{Results of actual error $E_{k}$,  a
posteriori indicator $\eta_{k}$,  and their convergence rates  ${\rm EOC}_{E}$ and ${\rm EOC}_{\eta}$: Case 1
  }
\small %\scriptsize
\begin{tabular}{|c|c|c|c|c|c|} \hline
$k$& ${\rm DOF}_{k}$& $E_{k}$& $\eta_{k}$ &${\rm EOC}_{E}$& ${\rm
EOC}_{\eta}$\\ \hline 1&8&1.3665&5.0938&$-$&$-$\\ \hline
2&20&1.1346&3.4700&0.2030&0.4189\\ \hline
3&44&0.8682&2.9300&0.3394&0.2145\\ \hline
4&89&0.6672&2.5032&0.3738&0.2235\\ \hline
5&171&0.4953&2.0907&0.4562&0.2757\\ \hline
6&354&0.3708&1.7170&0.3979&0.2706\\ \hline
7&760&0.2751&1.5639&0.3907&0.1222\\ \hline
8&1368&0.2163&1.3529&0.4091&0.2466\\ \hline
9&2235&0.1776&1.1115&0.4016&0.4004\\ \hline
10&4025&0.1381&0.8958&0.4276&0.3667\\ \hline
11&7165&0.1106&0.7111&0.3851&0.4004\\ \hline
12&13188&0.0871&0.5566&0.3915&0.4015\\ \hline
13&24445&0.0671&0.4368&0.4227&0.3927\\ \hline
14&43785&0.0510&0.3365&0.4707&0.4476\\ \hline
15&76770&0.0387&0.2581&0.4915&0.4724\\ \hline
\end{tabular}
\end{center}
\end{table}

Fig 8.2  shows an adaptively refined mesh with 4763 elements and the
estimated and actual errors  against the number of elements in
adaptively refined meshes for Case 1.  Fig 8.3 shows an adaptively
refined mesh with 1093 elements and the actual error against the
number of elements in adaptively refined meshes for Case 2.

From the first figures of Fig 8.2-8.3, we can see that  the
refinement again concentrates around the origin, which means  the
adaptive mixed finite element method detects the region of rapid
variation. In the second graphs of Fig 8.1-8.3 each includes an
optimal convergence line, which shows in both cases, the energy
error performs a trend of descend with an optimal order convergent
rate. Simultaneously, from the second graphs of Fig 8.1-8.3, we also
see that the proposed estimators are efficient with respect to the
strongly discontinuously coefficients.

 We note that the energy error is approximated with a 7-point
quadrature formula in each triangle.

\begin{figure}[htbp]
  \begin{minipage}[t]{0.5\linewidth}
    \centering
    \includegraphics[width=2.25in]{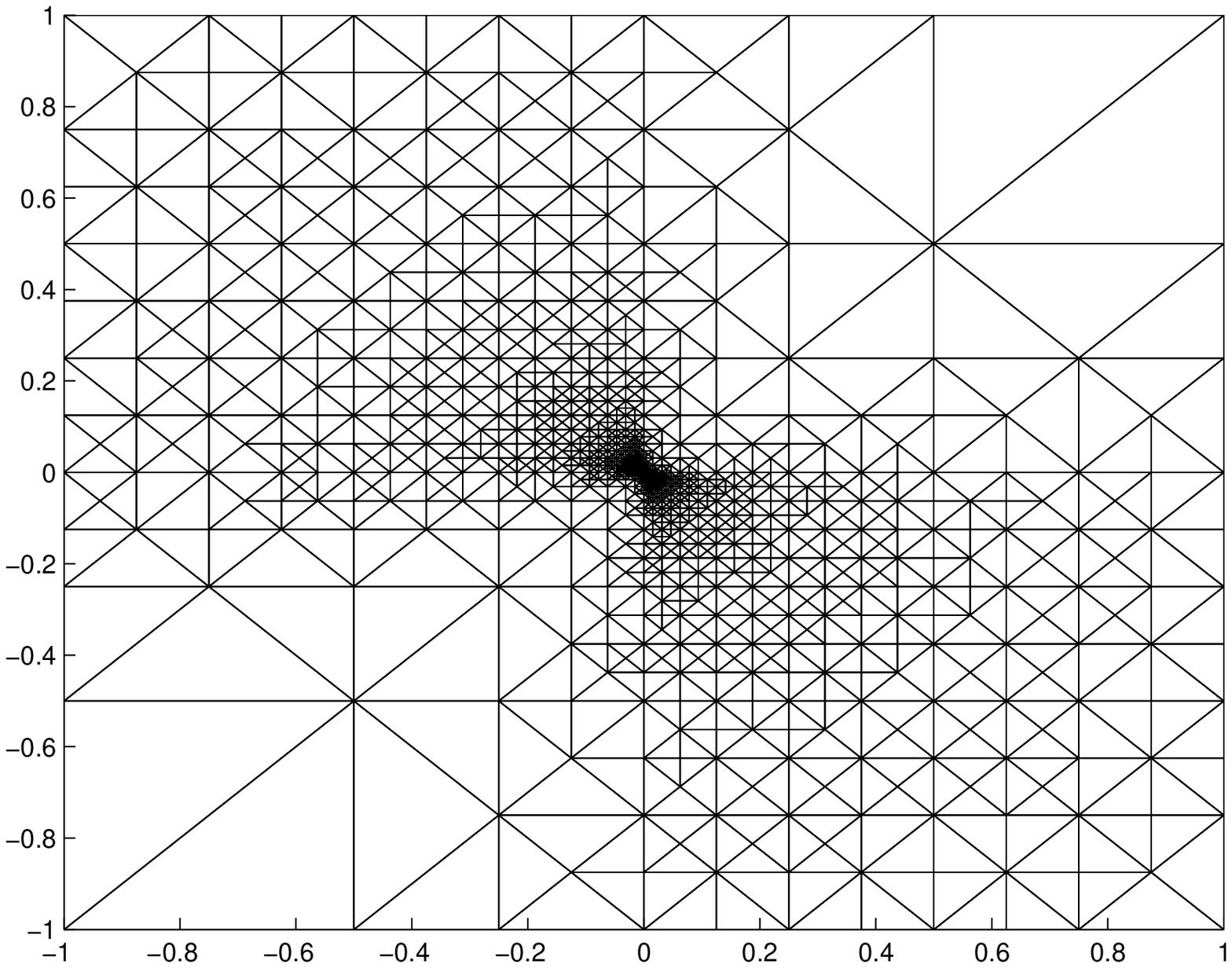}\\
  \end{minipage}
  \begin{minipage}[t]{0.5\linewidth}
    \centering
    \includegraphics[width=2.5in]{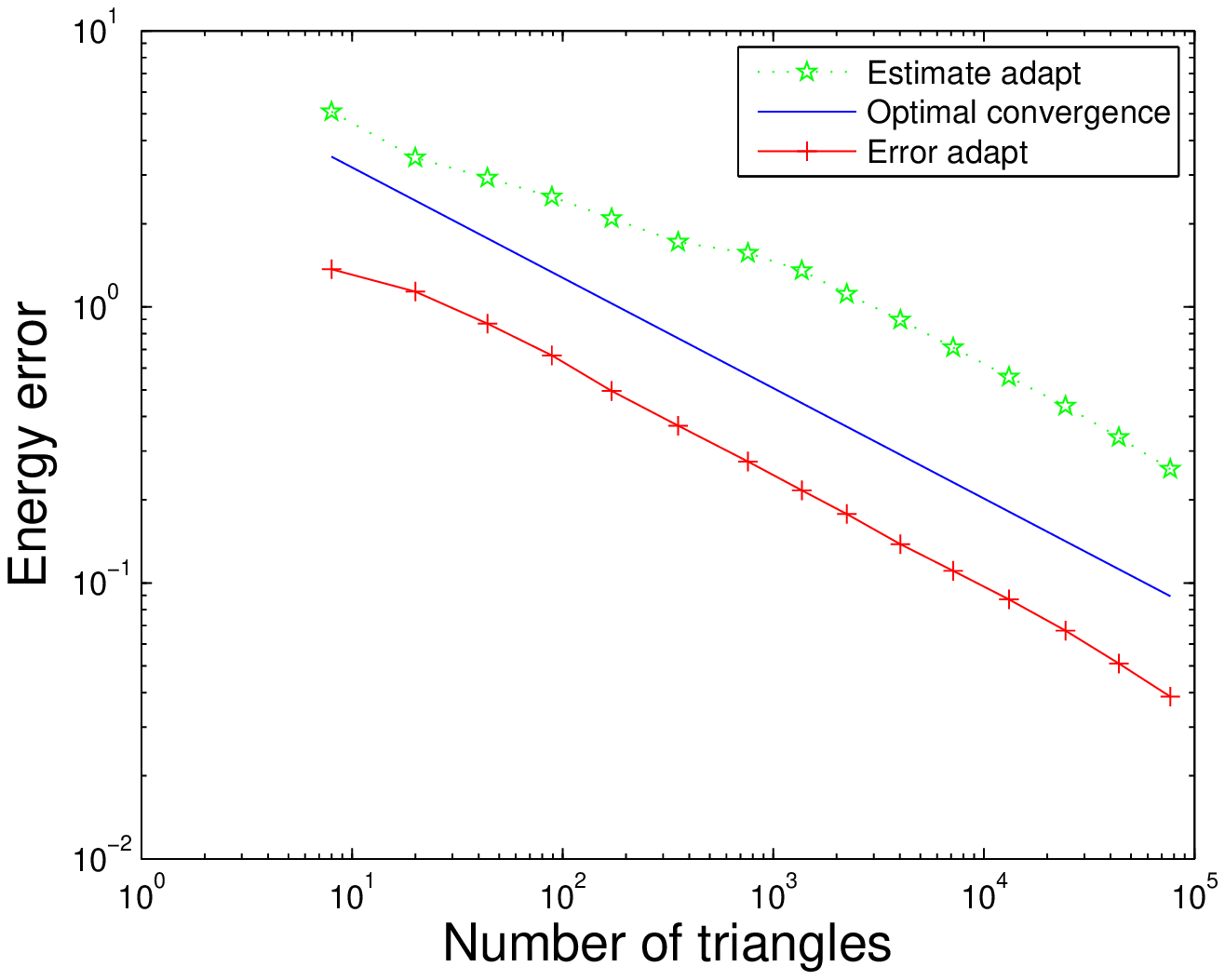}\\
  \end{minipage}
\addtocontents{lof}{figure}{FIG 8.2. {\small {\it A mesh with 4763
triangles
 (left) and the estimated and actual error
  against the number of elements in  adaptively refined meshes (right): Case 1.}}}\\
\end{figure}

\begin{figure}[htbp]
  \begin{minipage}[t]{0.5\linewidth}
    \centering
    \includegraphics[width=2.25in]{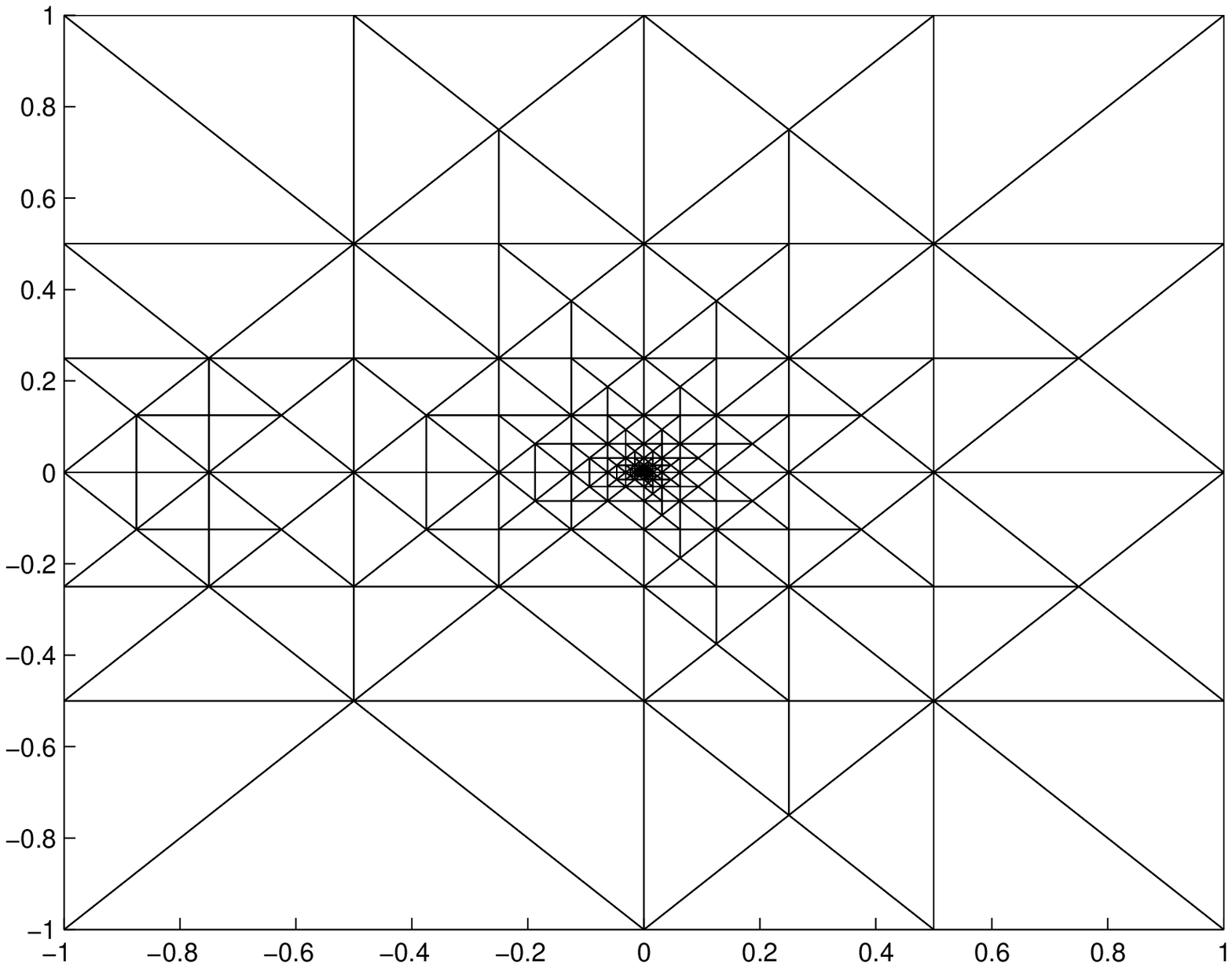}\\
  \end{minipage}
  \begin{minipage}[t]{0.5\linewidth}
    \centering
    \includegraphics[width=2.5in]{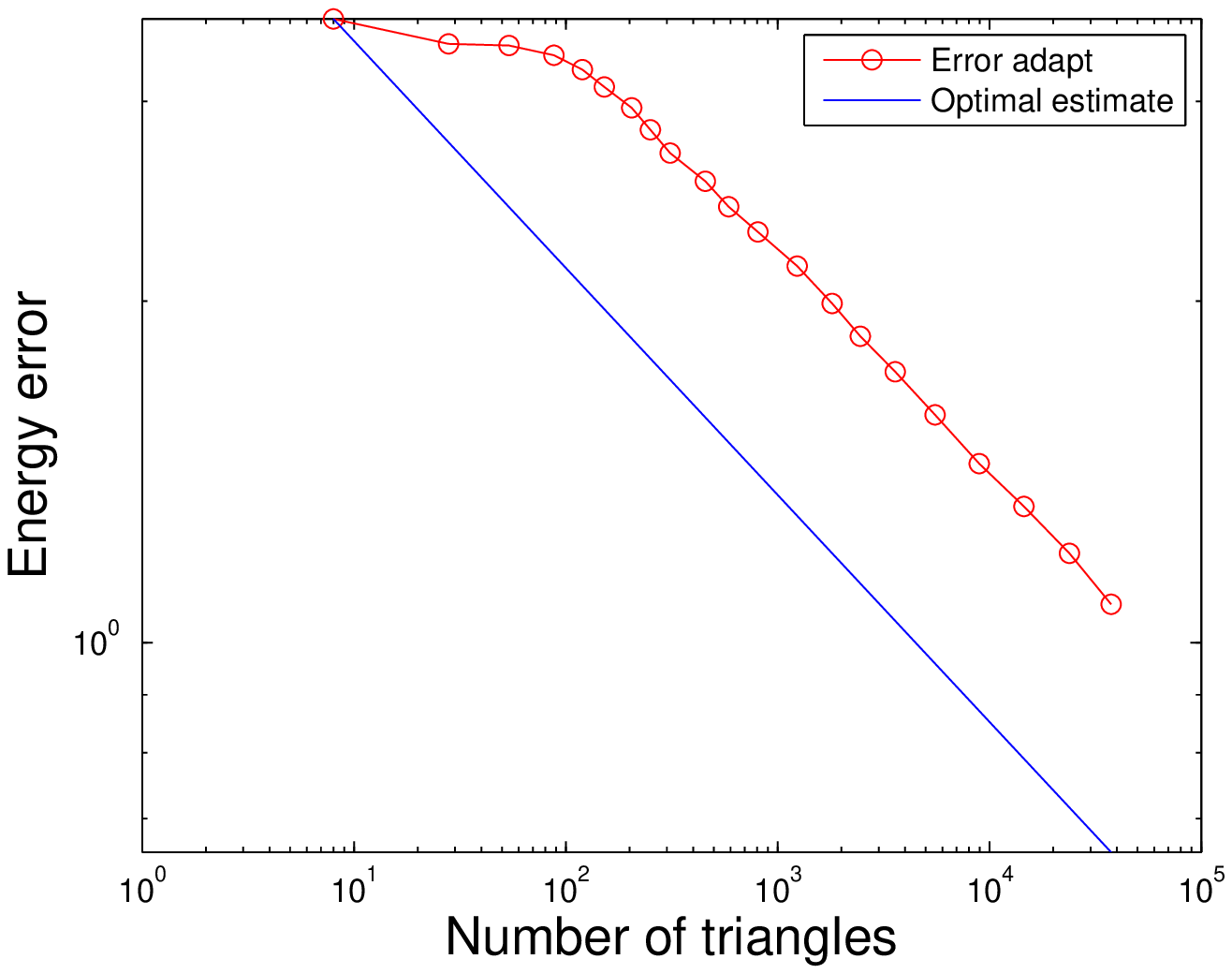}\\
  \end{minipage}
\addtocontents{lof}{figure}{FIG 8.3.\ {\small {\it A mesh with 1093
triangles
 (left) and the actual error
  against the number of elements in adaptively refined mesh (right): Case 2.}}}\\
\end{figure}

\subsection{ Convection-dominated model problem \cite{Vohralik1}}\
Let $S=\varepsilon I$, ${\bf w}=(0,1)$, $r=1$ and
$\Omega=(0,1)\times(0,1)$ in the model
(\ref{convection-diffufsion-equations1}). We consider four cases:
$\varepsilon =0.1, 0.01, 0.001, 0.0001$. Neumann boundary conditions
on the upper side,  Dirichlet boundary conditions elsewhere, and the
source term $f$ are chosen such that the exact solution has the form
\begin{equation*}
p(x,y)=0.5(1-\tanh(\frac{0.5-x}{a}))
\end{equation*}
with $a$ a positive constant. This solution is, in fact,
one-dimensional   and
 possesses an internal layer of width $a$ which we shall set,
 respectively, equal to 0.1, 0.05,  0.02,  0.001.

 We still start
 computations from an origin mesh which consists of 8 right-angled
 triangles, and refine it either uniformly (up to five refinements) or adaptively.
\begin{figure}[htbp]
  \begin{minipage}[t]{0.5\linewidth}
    \centering
    \includegraphics[width=2.25in]{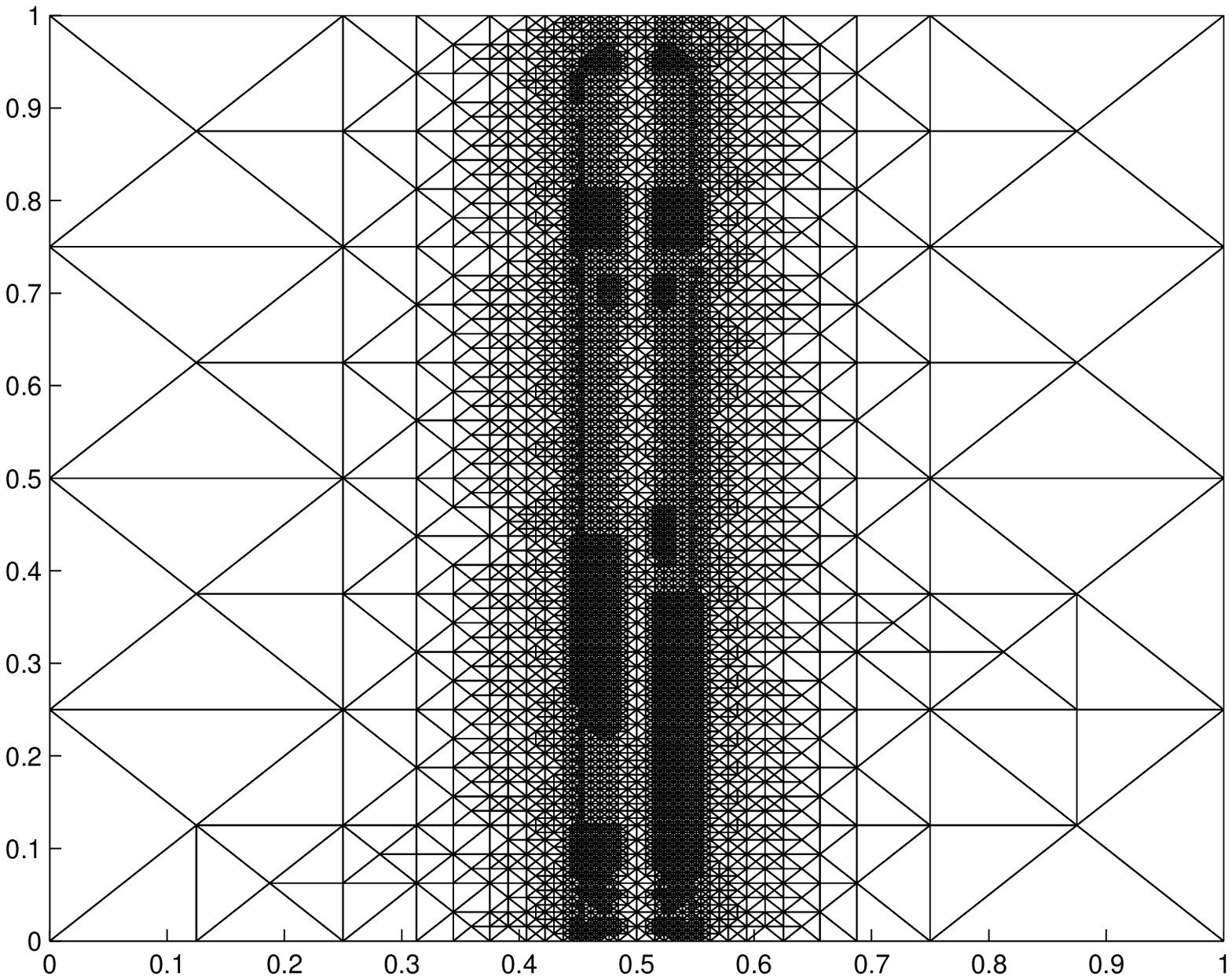}\\
  \end{minipage}
  \begin{minipage}[t]{0.5\linewidth}
    \centering
    \includegraphics[width=2.5in]{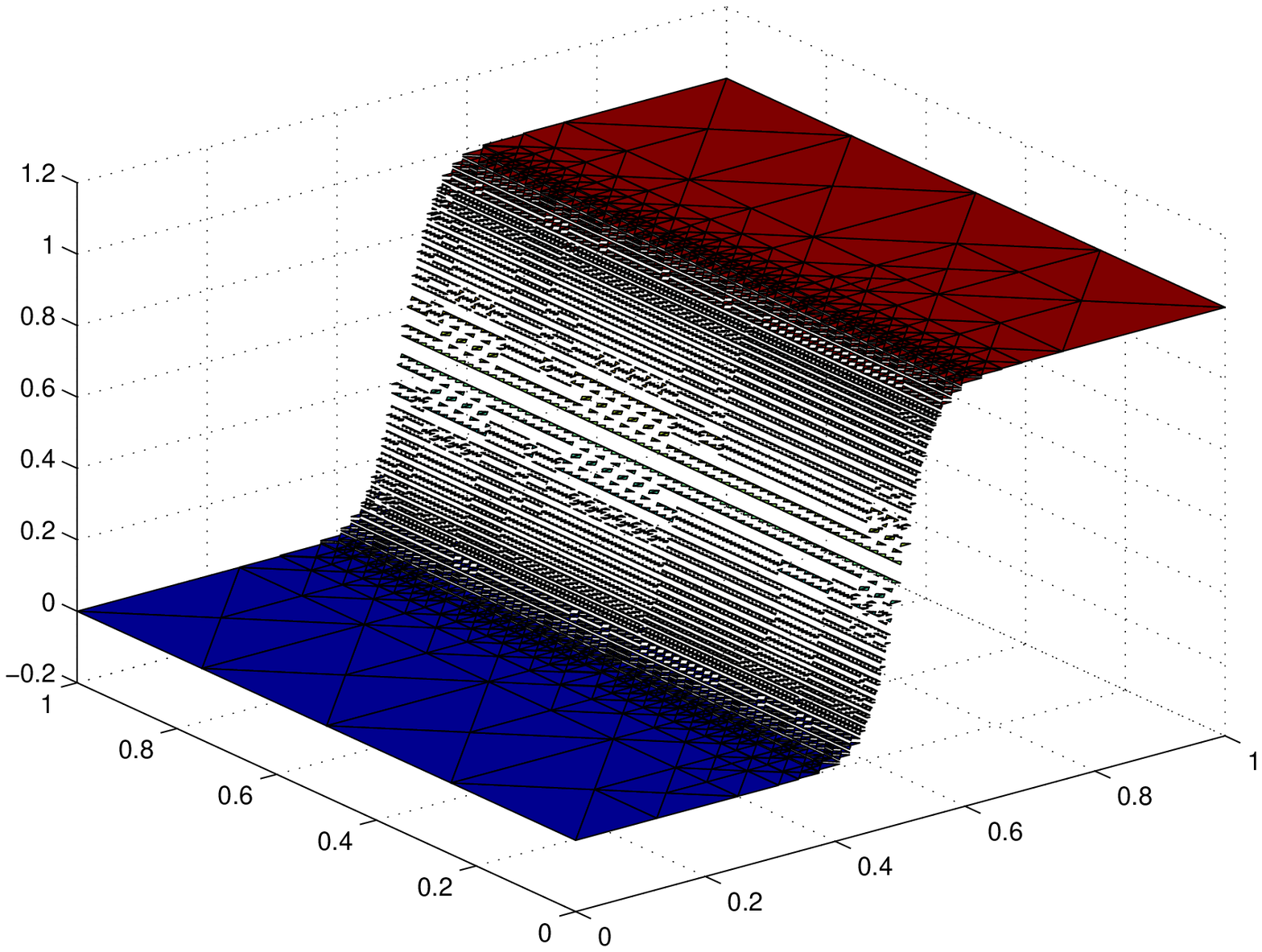}\\
  \end{minipage}
\addtocontents{lof}{figure}{FIG 8.4. {\small {\it A mesh with 12943
triangles
 (left) and the approximate displacement (piecewise constant) on the corresponding
 adaptively refined mesh (right) for $\varepsilon=0.01$ and a=0.05.}}}\\
\end{figure}
\begin{figure}[htbp]
  \begin{minipage}[t]{0.5\linewidth}
    \centering
    \includegraphics[width=2.25in]{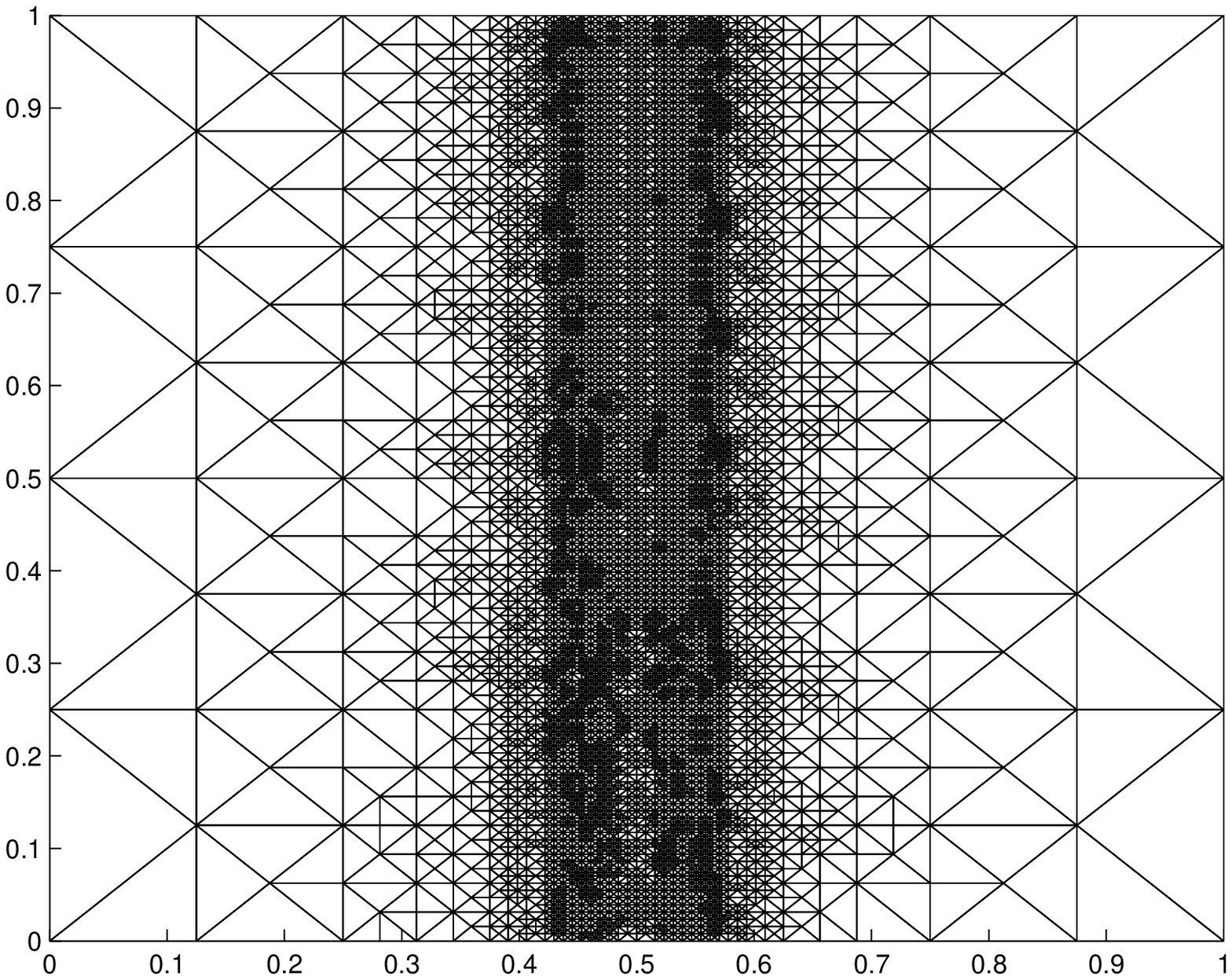}\\
  \end{minipage}
  \begin{minipage}[t]{0.5\linewidth}
    \centering
    \includegraphics[width=2.5in]{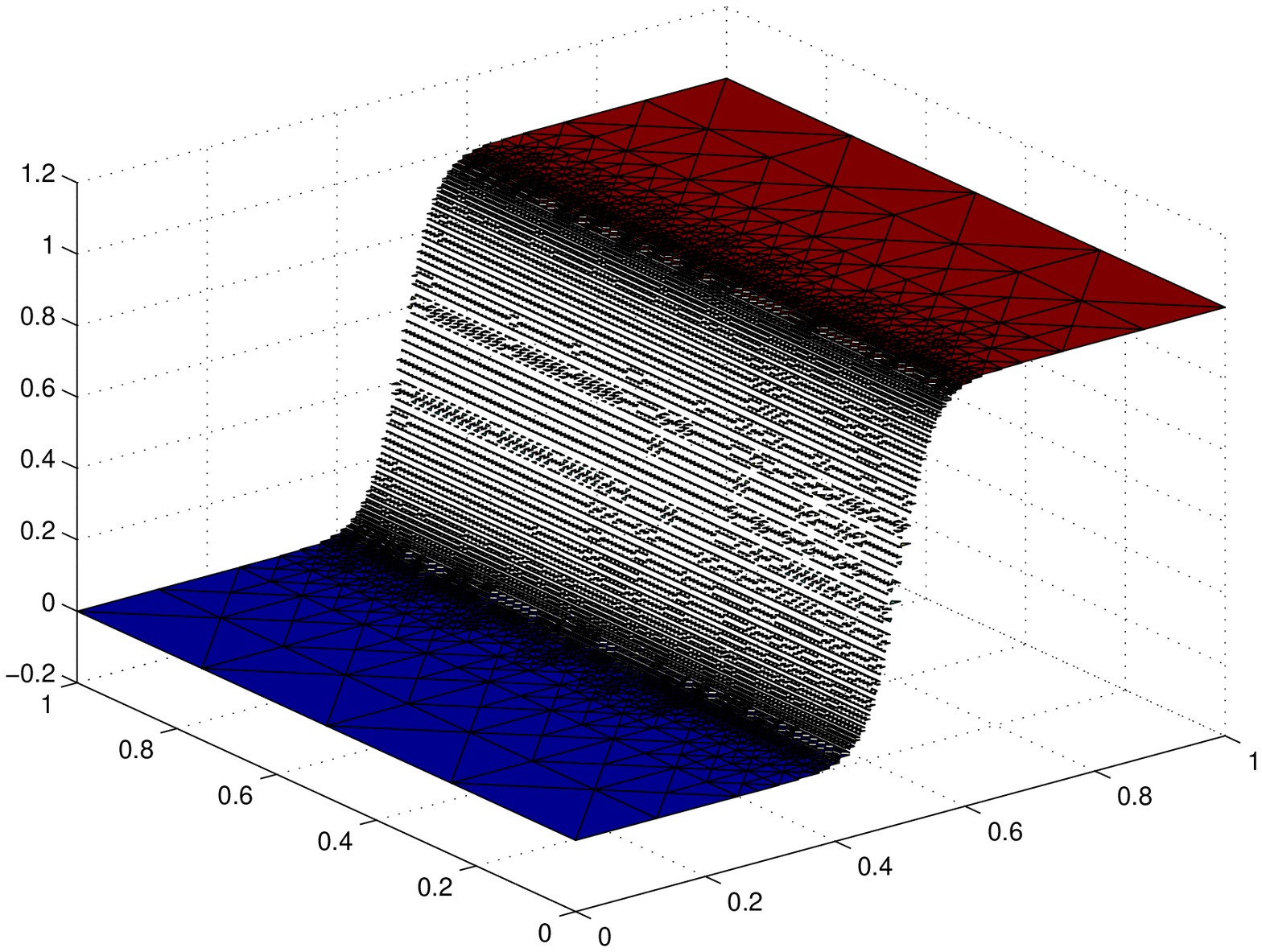}\\
  \end{minipage}
\addtocontents{lof}{figure}{FIG 8.5. {\small {\it A mesh with 16951
triangles
 (left) and approximate displacement (piecewise constant) on the corresponding
 adaptively refined mesh (right) for $\varepsilon=0.001$ and a=0.05.}}}\\
\end{figure}

In Fig 8.4 with $\varepsilon=0.01, a=0.05$ and Fig 8.5 with
$\varepsilon=0.001, a=0.05$, we can see that the refinement
concentrates at an internal layer of width $a=0.05$, and is away
from the center of the shock. Both the convection-dominated
regime on coarse grids and diffusion-dominated regime obtain the
progressive refinement. The effect is still rather good even if the approximation to
displacement is piecewise constant.
\begin{figure}[htbp]
  \begin{minipage}[t]{0.5\linewidth}
    \centering
    \includegraphics[width=2.25in]{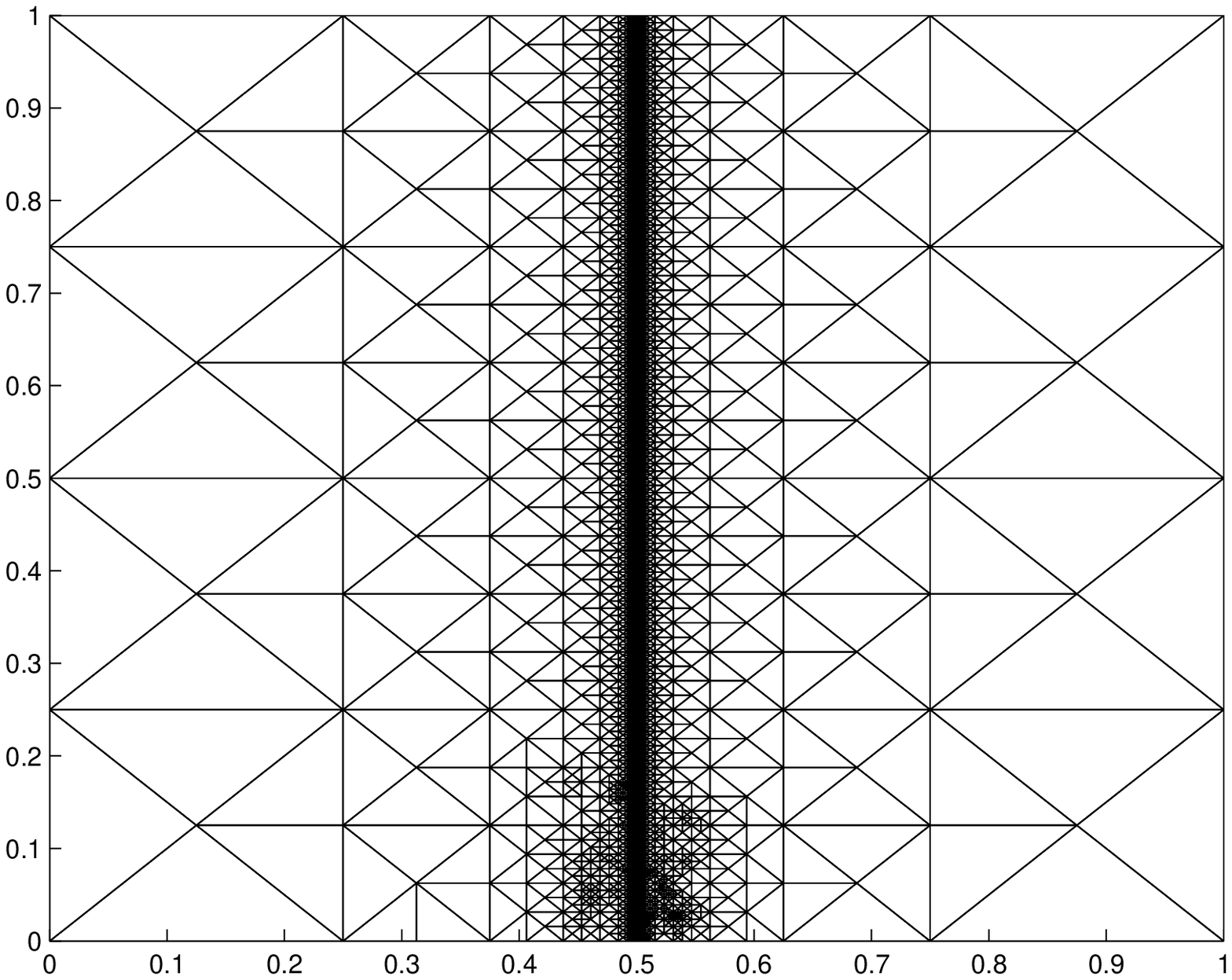}\\
  \end{minipage}
  \begin{minipage}[t]{0.5\linewidth}
    \centering
    \includegraphics[width=2.5in]{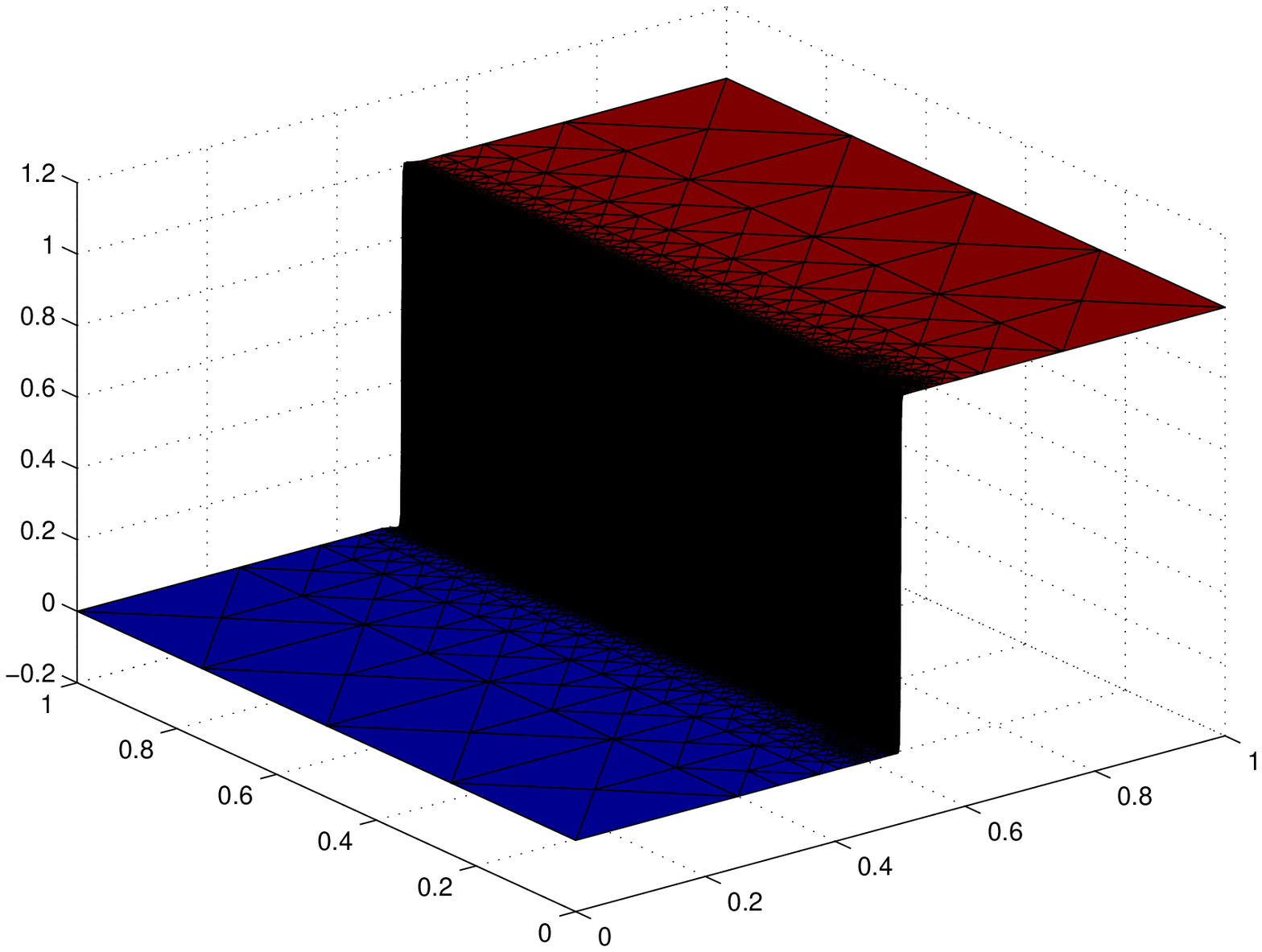}\\
  \end{minipage}
\addtocontents{lof}{figure}{FIG 8.6. {\small {\it A mesh with 39189
triangles
 (left) and postprocessing approximate displacement on the corresponding
 adaptively refined mesh (right) for $\varepsilon=0.0001$ and a=0.001.}}}\\
\end{figure}
\begin{figure}[htbp]
  \begin{minipage}[t]{0.5\linewidth}
    \centering
    \includegraphics[width=2.25in]{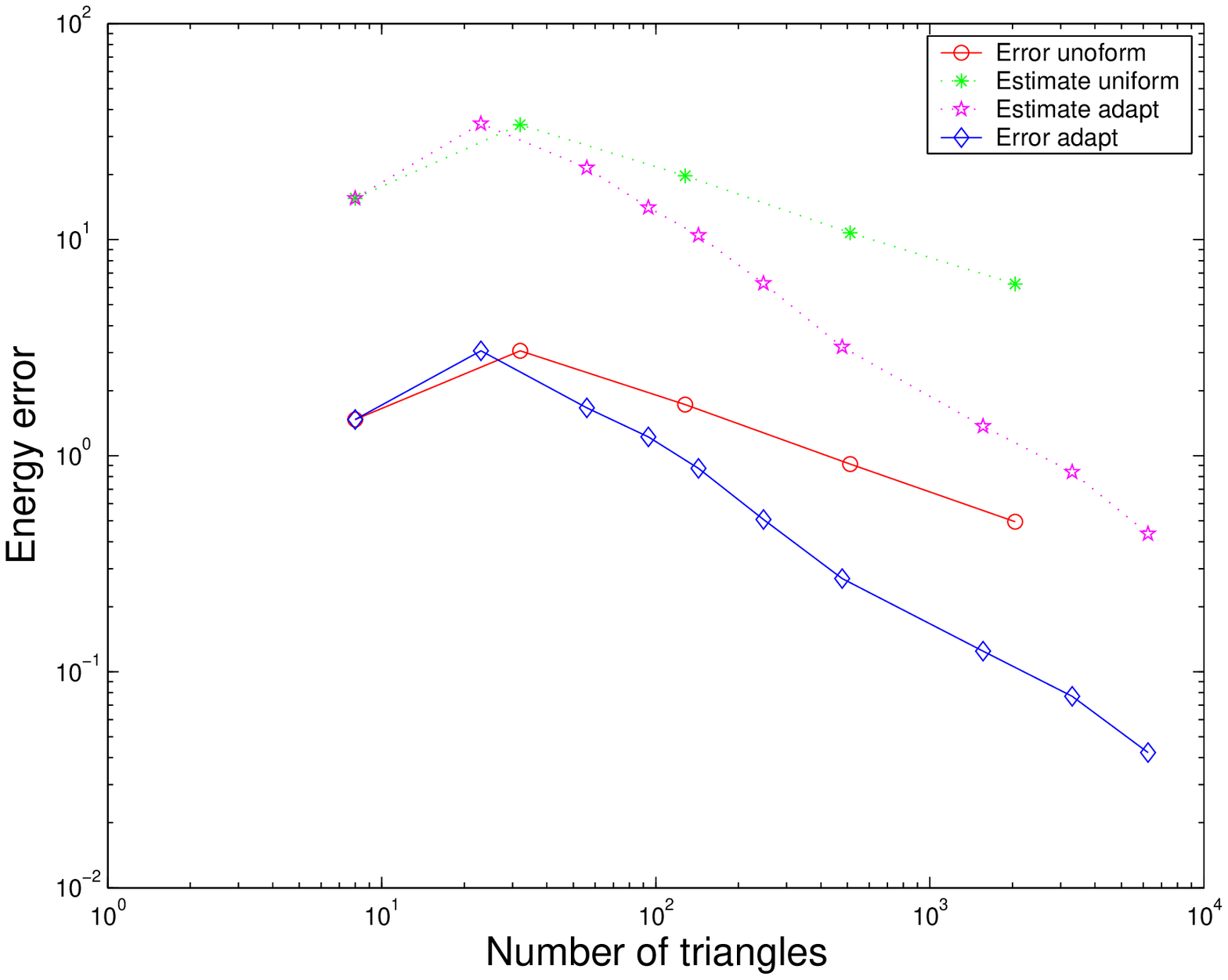}\\
  \end{minipage}
  \begin{minipage}[t]{0.5\linewidth}
    \centering
    \includegraphics[width=2.25in]{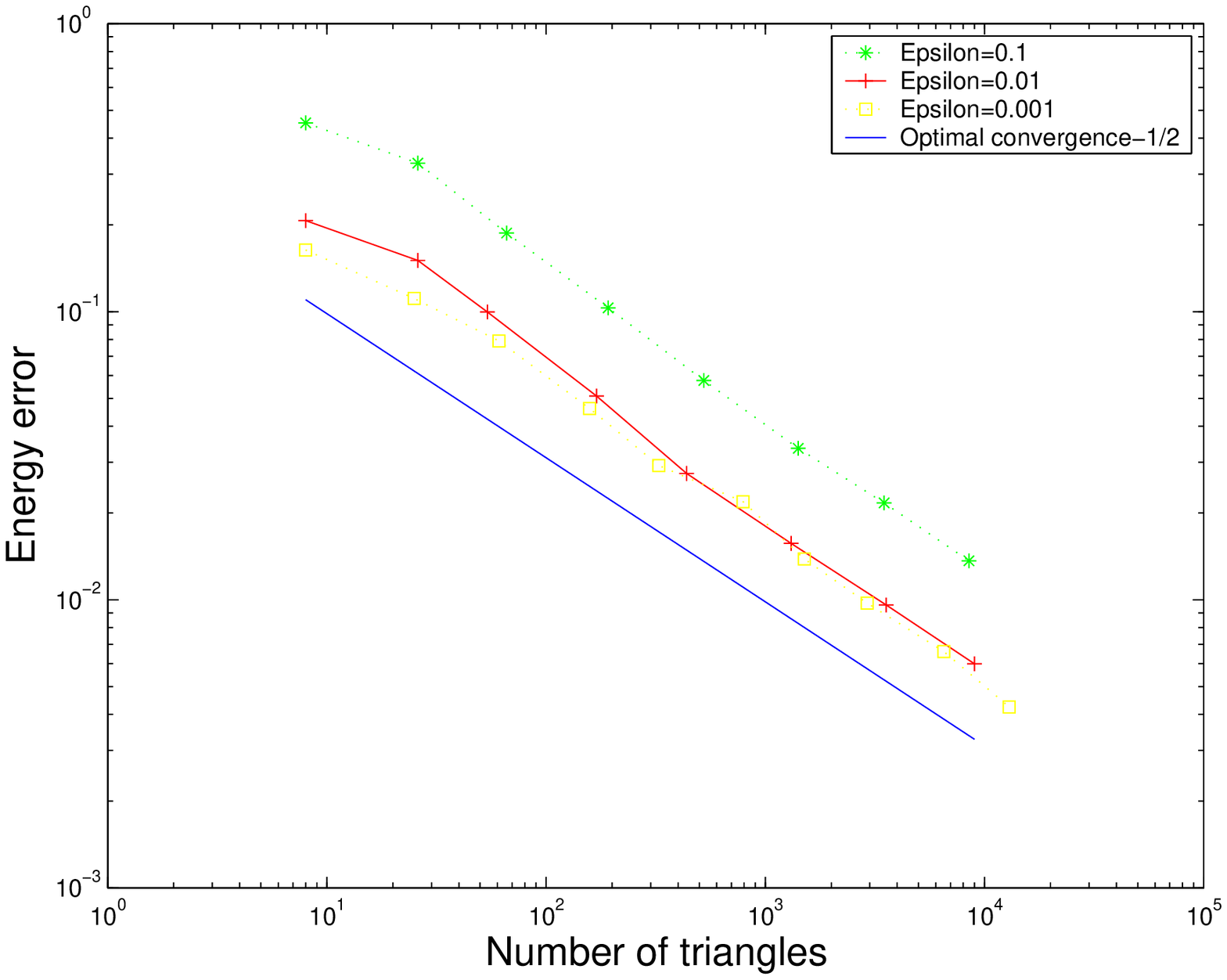}\\
  \end{minipage}
\addtocontents{lof}{figure}{FIG 8.7. {\small {\it Estimated and
actual error against the number of elements
  in uniformly and adaptively refined meshes for $\varepsilon=0.1, a=0.02$
  (left) and actual error against the nunber of elements in adaptively
  refined meshes for diffirent $\varepsilon$ for $a=0.1$(right) .}}}\\
\end{figure}

Fig 8.6 shows the mesh with 39184 triangles (left) and
postprocessing approximation to the scalar displacement on the
corresponding adaptively refined mesh (right) in case:
$\varepsilon=0.0001$ and width $a=0.001$. Here the value of the
postprocessing approximation on each node is taken as the
algorithmic mean of the values of the displacement finite element
solution on all the elements sharing the vertex. The reason for the
postprocessing is that the displacement finite element solution is
not continuous on each vertex of the triangulation. We again see
that the refinement focuses around layer of width $a=0.001$, this
indicates that the estimators actually capture interior layers and
resolve them in convection-domianed regions. In addition, the
postprocessing approximation to the scalar displacement obtains a
satisfactory result.

In Fig 8.7 with $\varepsilon=0.1,a=0.02$ (left), the estimated and
actual errors are plotted against the number of elements in
uniformly and adaptively refined meshes. Again, we see that one can
substantially reduce the unknowns necessary to attain the prescribed
precision by using the proposed estimators and adaptively refined
grids. The second graph of Fig 8.7 shows the actual error against
the number of elements in adaptively refined meshes for different
$\varepsilon$ in case $a=0.1$, and also concludes a line with
optimal convergence $-1/2$. In addition, we also see that the almost
same error decay occurs in cases: $\varepsilon=0.01$ and
$\varepsilon=0.001$.

\end{document}